\def\a{\alpha}
\def\b{\beta}
\def\C{\mathcal{C}}
\def\g{\gamma}
\def\D{\Delta}
\def\H{\mathcal{H}}
\def\k{\kappa}
\def\L{\Lambda}
\def\l{\lambda}
\def\N{\mathbb{N}}
\def\R{\mathbb{R}}
\def\s{\sigma}
\def\S{\Sigma}
\def\w{\omega}
\def\Z{\mathbb{Z}}
\def\d{\partial}
\def\cross{\times}
\def\U{\mathcal{U}}
\def\e{\epsilon}
\def\half{\tfrac{1}{2}}
\def\t{\tau}
\def\P{\mathbb{P}}
\def\le{\leqslant}
\def\ge{\geqslant}
\def\PMF{\mathcal{PMF}}
\def\Bhat{\widehat B}
\def\Gbar{\overline{G}}
\def\Grel{\widehat G}
\def\gz{G^{\Z_+}}
\def\fmin{\mathcal{F}_{min}}
\def\ds{\mathcal{D}}
\def\Ksplit{K_1}
\def\Kn{K_2}
\def\Knpp{K_{3}}
\def\Knpphalf{K_{5}}
\def\Knested{K_6}
\def\Kgp{K_{7}}
\def\Knbd{K_{8}}
\def\Kpa{K_{9}}
\def\Ktrans{K_{10}}
\def\Kexp{K_{11}}
\def\Kqc{K_{12}}
\def\fix{\text{fix}}
\def\pA{pseudo-Anosov\xspace}
\def\qg{quasigeodesic\xspace}
\def\qgs{quasigeodesics\xspace}
\def\npp{nearest point projection\xspace}
\def\npps{nearest point projections\xspace}
\newcommand{\norm}[1]{|#1|}
\newcommand{\dhat}[1]{\widehat d (#1)}
\newcommand{\nhat}[1]{\dhat{1,#1}}
\newcommand{\dc}[1]{d_\C(#1)}
\newcommand{\mun}[1]{\mu_{#1}}
\newcommand{\rmun}[1]{\widetilde \mu_{#1}}
\newcommand{\rmu}{\widetilde \mu}
\newcommand{\rnu}{\widetilde \nu}
\newcommand{\gp}[2]{(#1 | #2)}
\newcommand{\Hbar}[1]{\overline{H(#1)}}
\newcommand{\pref}[1]{Proposition \ref{prop:#1}\xspace}
\newtheorem{theorem}{Theorem}[section]
\newtheorem{lemma}[theorem]{Lemma}
\newtheorem{proposition}[theorem]{Proposition}
\newtheorem*{theorem:main}{Theorem \ref{theorem:main}}
\theoremstyle{definition}
\newtheorem{claim}[theorem]{Claim}
\newcounter{case}
\begin{document}


\title{Random Heegaard splittings}
\author{Joseph Maher\footnote{email: joseph.maher@csi.cuny.edu}}
\date{\today}

\maketitle

\begin{abstract}
  Consider a random walk on the mapping class group, and let $w_n$ be
  the location of the random walk at time $n$.  A random Heegaard
  splitting $M(w_n)$ is a $3$-manifold obtained by using $w_n$ as the
  gluing map between two handlebodies. We show that the joint
  distribution of $(w_n, w_n^{-1})$ is asymptotically independent, and
  converges to the product of the harmonic and reflected harmonic
  measures defined by the random walk. We use this to show that the
  translation length of $w_n$ acting on the curve complex, and the
  distance between the disc sets of $M(w_n)$ in the curve complex,
  grows linearly in $n$.  In particular, this implies that a random
  Heegaard splitting is hyperbolic with asymptotic probability one.

Subject code: 37E30, 20F65, 57M50.

\end{abstract}

\tableofcontents

\section{Introduction}

Let $\S$ be a closed orientable surface of genus at least two.  The
mapping class group $G$ of $\S$ is the group of orientation preserving
diffeomorphisms of $\S$, modulo those isotopic to the identity. A
random walk on $G$ is a Markov chain on $G$ with transition
probabilities $p(x,y) = \mu(x^{-1}y)$, where $\mu$ is a probability
distribution on $G$, and we will always assume that the starting point
at time zero is the identity element in the group.  The path space of
the random walk is the probability space $(\gz, \P)$, where $w \in
G^{\Z_+}$ is a sample path, namely a sequence of group elements $w_n$
corresponding to the location of the sample path at time $n$.  The
measure $\P$ restricted to the $n$-th factor gives the distribution of
$w_n$, which is the $n$-fold convolution of $\mu$ with itself, which
we shall denote by $\mun{n}$.  A \emph{random Heegaard splitting}
$M(w_n)$ is a $3$-manifold constructed by gluing two handlebodies
together using a random walk of length $n$ as the gluing map. The
\emph{distance} $d(M(w_n))$ of a Heegaard splitting is defined in
terms of the complex of curves, which is a simplicial complex whose
vertices are isotopy classes of simple closed curves in the surface,
and whose simplicies are spanned by collections of disjoint simple
closed curves. The disc set of a handlebody is the collection of
simple closed curves which bound discs in the handlebody. The distance
of a Heegaard splitting is the minimal distance between the disc sets
of the two handlebodies in the complex of curves.  In this paper, we
show that the distance of a random Heegaard splitting grows linearly
with the length of the random walk, assuming the following
restrictions on the support of $\mu$.  We say a subgroup of the
mapping class group is complete if the endpoints of its \pA elements
are dense in $\PMF$, and we require the semi-group generated by the
support of $\mu$ to contain a complete subgroup of the mapping class
group.

\begin{theorem} 
\label{theorem:main}
Let $G$ be the mapping class group of a closed orientable surface of
genus at least two.  Consider a random walk generated by a finitely
supported probability distribution $\mu$ on $G$, whose support
generates a semi-group containing a complete subgroup of the mapping
class group.  Then there are constants $\ell_2 \ge \ell_1 > 0$ such
that
\[ \P( \ell_1 \le \tfrac{1}{n}d(M(w_n)) \le \ell_2 ) \to 1 \text{ as }
n \to \infty, \] 
where $d(M(w_n))$ is the distance of the Heegaard splitting determined
by $w_n$.
\end{theorem}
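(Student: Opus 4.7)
The upper bound is straightforward. Since $\mu$ is finitely supported, the word length of $w_n$ in any word metric built from the support grows at most linearly in $n$, almost surely. The action of $G$ on the vertices of the curve complex is Lipschitz with respect to any word metric, so for a fixed disc curve $y \in \ds$ we have $\dc{y, w_n y} \le K n$ almost surely. Since $w_n y$ lies in $w_n \ds$, this gives $d(M(w_n)) = \dc{\ds, w_n \ds} \le Kn$, producing a valid $\ell_2$.

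\textbf{Lower bound.} The strategy is to choose a basepoint $y \in \ds$ and then show that with asymptotic probability one (i) the distance $\dc{y, w_n y}$ grows linearly, (ii) the nearest point projection of $w_n y$ to $\ds$ is bounded, and (iii) the nearest point projection of $y$ to $w_n \ds$ is bounded. Combining these via quasi-convexity of the disc set in $\C$ (Masur--Schleimer) gives that a $\C$-geodesic from $\ds$ to $w_n \ds$ must have length at least $\dc{y, w_n y} - O(1)$, yielding a linear lower bound $\ell_1 n$.

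Step (i) is the linear progress theorem for random walks on the curve complex, which follows from the earlier analysis in this paper (or from prior work). For steps (ii) and (iii), the key input is the main technical result promised by the abstract: the joint distribution of $(w_n, w_n^{-1})$ converges to the product measure $\nu \times \check\nu$, where $\nu, \check\nu$ are the harmonic and reflected harmonic measures on $\PMF$. Using this, (ii) is established by showing that $w_n y$ converges to a $\nu$-random point of $\PMF$, and that $\nu$ assigns zero mass to the closure of $\ds$ in $\PMF$; since the projection to $\ds$ of a sequence limiting to a point off the limit set of $\ds$ is bounded, this gives a bounded projection with high probability. By $G$-equivariance, a large projection of $y$ to $w_n \ds$ is the same as a large projection of $w_n^{-1} y$ to $\ds$, so (iii) follows from the analogous statement for the reflected walk and $\check\nu$. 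The joint convergence ensures that (ii) and (iii) can be arranged to hold simultaneously rather than merely individually, which is what the argument requires.

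\textbf{Main obstacle.} The principal difficulty is establishing that $\nu$ (and $\check\nu$) charges no subset of $\PMF$ containing the limit set of $\ds$. The closure of $\ds$ in $\PMF$ may include filling and even uniquely ergodic laminations, so one cannot simply invoke the Kaimanovich--Masur description of $\nu$ as supported on uniquely ergodic filling laminations. This is precisely where the assumption that the semi-group generated by the support of $\mu$ contains a complete subgroup is essential: one needs enough \pA elements with endpoints scattered densely in $\PMF$ to rule out the harmonic measure concentrating on the codimension-positive subset corresponding to $\ds$. I expect the proof will proceed through a shadow-decay estimate for $\nu$ on $\PMF$, combined with the observation that the disc set admits only bounded projection from generic $\pA$ axes, so that any atom or positive-mass component of $\nu$ on the limit set of $\ds$ would contradict the translation-length growth in (i).
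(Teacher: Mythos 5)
Your overall architecture matches the paper's: linear progress for the upper and lower rates, quasiconvexity of $\ds$, the measure-zero statement for the limit set of the disc set, and the convergence of the distributions of $w_n$ and $w_n^{-1}$ to $\nu$ and $\rnu$. The paper routes the lower bound slightly differently — through the translation length $\t_{w_n}$ and the bounded projection of $\ds$ onto a quasi-axis of $w_n$ whose endpoints lie in halfspaces disjoint from $\overline{\ds}$ — rather than through the geodesic $[y, w_n y]$ and the projections of its endpoints, but these are essentially interchangeable. One mis-diagnosis: you do not need the joint convergence of $(w_n, w_n^{-1})$ merely to make (ii) and (iii) hold simultaneously, since each holds with probability tending to one and a union bound suffices. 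Where the product structure is genuinely needed in the paper is to guarantee that the stable and unstable foliations avoid the \emph{diagonal} of $\fmin \cross \fmin$ (which has $\nu \cross \rnu$-measure zero only because the limit is a product of non-atomic measures); this uniform separation of the two endpoints is what keeps the basepoint a bounded distance from the axis and makes $\t_{w_n}$ comparable to $\dhat{1, w_n}$.

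The genuine gap is exactly the step you flag as the main obstacle, and the mechanism you propose for it does not work. Positive harmonic measure on the limit set of $\ds$ would not contradict linear growth of translation lengths: there exist \pA elements of arbitrarily large translation length whose fixed points lie in the limit set of the disc set (for instance those extending over the handlebody), so no statement about translation lengths alone can push $\nu$ off $\Lambda(\ds)$; and ``bounded projection of $\ds$ from generic axes'' is essentially a restatement of what you are trying to prove. The paper's actual argument is of a different nature. Kerckhoff's theorem supplies a fixed train track $\t$ that can be split a uniformly bounded number of times to a track whose polyhedron of measures is disjoint from any given translate $g\ds$. Iterating this, together with an ``inversion'' element taking the exterior of $N(\t)$ into its interior (this is where the complete-subgroup hypothesis is used, to realise the required carrying and inversion maps inside the semigroup generated by the support of $\rmu$), produces for every translate $g\ds$ an infinite family of pairwise disjoint translates $w_i g\ds$ with $\norm{w_i}$ bounded independently of $g$. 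Stationarity of $\nu$ then yields a maximum-principle contradiction: choosing $g\ds$ with $\nu(g\ds)$ within $\e$ of the supremum forces each $\nu(w_i g\ds)$ to be close to the supremum as well, and finitely many disjoint such translates would have total measure exceeding one. Without this disjoint-translates argument (or some substitute for it), the measure-zero statement, and hence the lower bound in the theorem, is not established.
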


In particular, the probability that the Heegaard splitting distance of
$M(w_n)$ is greater than two tends to one. If a manifold has Heegaard
splitting distance greater than two then Kobayashi \cite{kobayashi}
and Hempel \cite{hempel} show that the manifold is irreducible,
atoroidal and not Seifert fibered, which implies that the manifold is
hyperbolic, by geometrization, due to Perelman \cites{perelman1,
  perelman2}. This confirms a conjecture of W. Thurston from
\cite{thurston}, in which he suggests that most manifolds produced
from Heegaard splittings of a fixed genus should be hyperbolic. This
may also be thought of as a result similar in spirit to Thurston's
Dehn surgery theorem, which says that most fillings on a hyperbolic
manifold with toroidal boundary give rise to hyperbolic manifolds; we
have shown that most Heegaard splittings give rise to hyperbolic
manifolds, though of course, for a very different definition of most.
More recently, Dunfield and W. Thurston \cite{dt} have conjectured
that the volume of these random Heegaard splittings should grow
linearly with the length of the random walk. This follows from our
work, and some recent results announced by Brock and Souto
\cite{bs}. Lustig and Moriah \cite{lm} have shown that high distance
splittings are generic for an alternative definition of genericity,
using the Lebesgue measure class on $\PMF$.

The growth of the splitting distance has other implications for the
resulting $3$-manifolds.  By work of Hartshorn \cite{hartshorn},
Theorem \ref{theorem:main} shows that the probability that $M(w_n)$
contains an incompressible surface of genus at most $N$ tends to zero
as $n$ tends to infinity, and by Scharlemann and Tomova \cite{st}, the
probability that the manifold has a Heegaard splitting of lower genus
tends to zero as $n$ tends to infinity.

The Torelli group is the subgroup of the mapping class group which
acts trivially on homology. Removing a standard handlebody from $S^3$
and gluing it back in using an element of the Torelli group produces a
homology sphere. The Torelli group is complete, and so these results
apply to the nearest neighbour random walk on the Torelli group, which
may be used to produce random Heegaard splittings with trivial
homology, which are also hyperbolic with asymptotic probability one.

\subsection{Outline}

Masur and Minsky \cite{mm1} showed that the mapping class group is
weakly relatively hyperbolic. In particular, they showed that the
relative space is quasi-isometric to the complex of curves, which is
$\delta$-hyperbolic, though not proper. Klarreich \cite{klarreich}
identified the Gromov boundary of the complex of curves as $\fmin$,
the space of minimal foliations on the surface.  Using work of
Kaimanovich and Masur \cite{km} one may show that a random walk on the
mapping class group converges to a minimal foliation in the Gromov
boundary almost surely. This defines a measure $\nu$, known as
harmonic measure, on the boundary, where the measure of a subset is
just the probability that a sample path of the random walk converges
to a foliation contained in that subset. This measure depends on the
choice of probability distribution used to generate the random walk.
For example, if we consider the \emph{reflected} random walk generated
by $\rmu(g) = \mu(g^{-1})$, then if $\mu$ is not symmetric, this may
produce a different harmonic measure, which we shall call the
reflected harmonic measure, and denote $\rnu$.

Masur and Minsky \cite{mm3} showed that the disc set $\ds$ is a
quasiconvex subset of the complex of curves. Consider a \pA element
$g$ of the mapping class group, whose stable and unstable foliations
are disjoint from the limit set of the disc set. As the disc set is
quasiconvex, its nearest point projection onto an axis for $g$ has
bounded diameter, and so if the translation length of $g$ is large,
the nearest point projections of $\ds$ and $g\ds$ to the axis will be
disjoint. In fact, the distance between the two discs sets will be
roughly the translation length of $g$ minus the diameter of the
projection of the disc set to the axis.

We need to show this situation is generic for group elements arising
from random walks. In \cite{maher1} we showed that a random walk $w_n$
gives a \pA element with asymptotic probability one. However, we also
need to show that its stable and unstable foliations lies outside the
limit set of the disc set with asymptotic probability one. We start by
showing that the harmonic measure of the limit set of the disc set is
zero, using a result of Kerckhoff \cite{kerckhoff} that says any train
track may be split finitely many times to produce a train track
disjoint from the disc set.  We then show that the distribution of
pairs of stable and unstable foliations $(\l^+(w_n), \l^-(w_n))$ in
$\fmin \cross \fmin$ converges to the product of harmonic measure and
reflected harmonic measure on $\fmin \cross \fmin$. We briefly explain
why one would expect this to be the case.  The first observation is
that if $g$ is a \pA element of large translation length, then the
stable foliation of $g$ lies in the boundary of the halfspace
$H(1,g)$, where the halfspace $H(1,g)$ consists of all points in the
relative space at least as close to $g$ as to $1$. This is an
elementary property of hyperbolic translations in a
$\delta$-hyperbolic space, and the corresponding statement follows for
the unstable foliations, which is contained in the halfspace
$H(1,g^{-1})$. This indicates that the distribution of the pairs of
stable and unstable foliations should be closely related to the
distribution of pairs $(w_n, w_n^{-1})$. A sample path $w$ converges
to a minimal foliation $F(w)$ almost surely, and it can be shown that
for large $n$, the element $w_{2n}$ lies in the halfspace $H(1, w_n)$
determined by the midpoint $w_n$ of the sample path $w$, with
asymptotic probability one.  This is illustrated on the left hand side
of Figure \ref{picture6} below.

\begin{figure}[H] 
\begin{center}
\epsfig{file=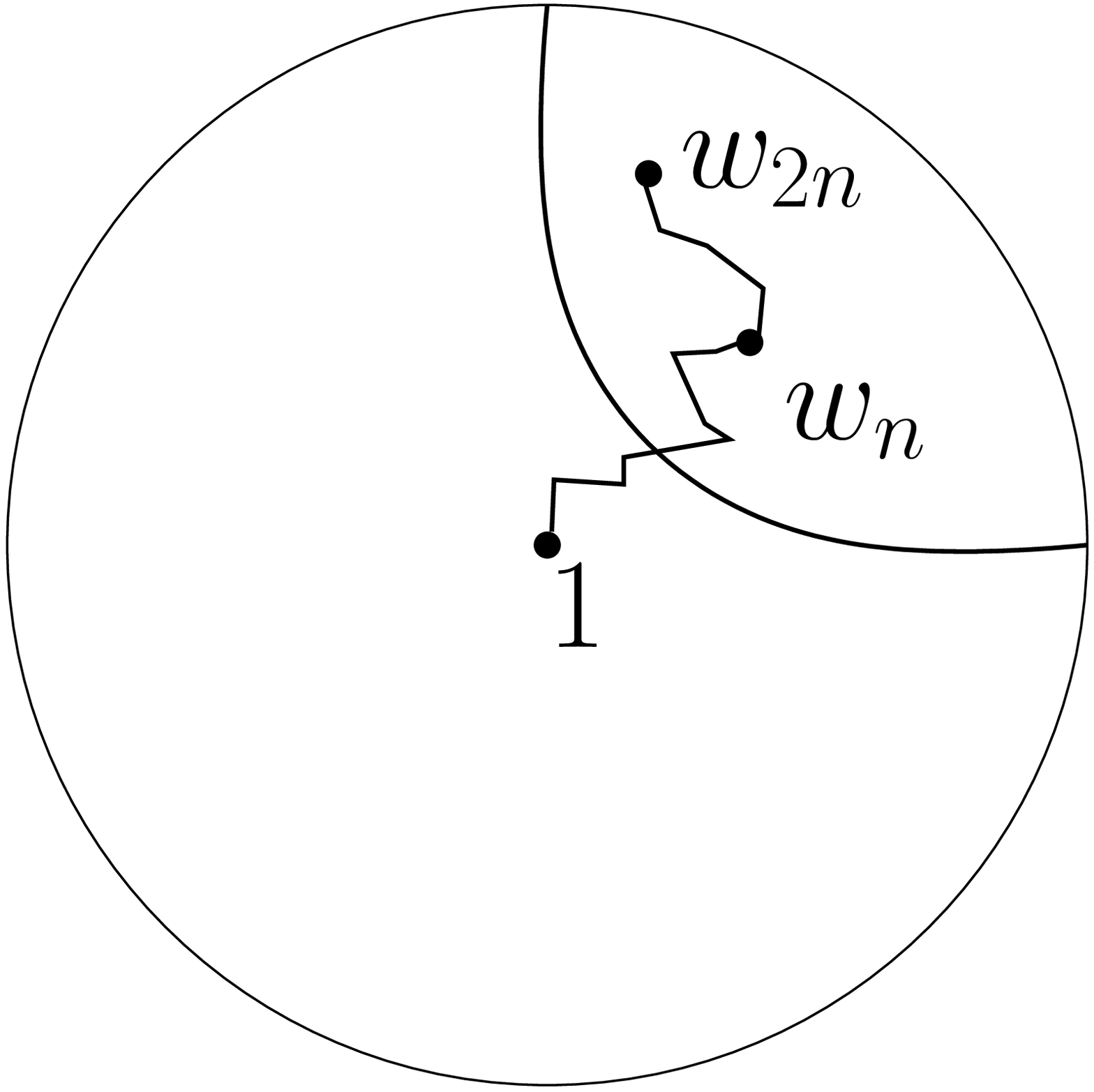, height=100pt}
\end{center}
\caption{Estimating the distribution of stable and unstable foliations.}\label{picture6}
\end{figure}

The corresponding statement therefore holds for the inverse of the
sample path, i.e. the endpoint of the inverse path $w_{2n}^{-1}$ lies
in the halfspace determined by the midpoint of the sample path.  If we
denote the increments of the sample path by $a_i$, then $w_{2n}^{-1}$
is equal to $a_{2n}^{-1} \ldots a_1^{-1}$, so the midpoint is given by
$a_{2n}^{-1} \ldots a_{n+1}^{-1}$. This is illustrated on the right
hand side of Figure \ref{picture6} above.  Therefore the distribution
of pairs $(w_{2n}, w_{2n}^{-1})$ is closely related to the
distribution of midpoints of sample paths $(w_n, w_{2n}^{-1} w_n)$.
However, the first half of the sample path $w_n = a_1 \ldots a_n$ is
independent of the second half of the sample path $a_{n+1} \ldots
a_{2n}$, and so independent of its inverse $w_{2n}^{-1}w_n$, as well.
Therefore these pairs are independently distributed according to the
product $\mun{n} \cross \rmun{n}$, which converges to $\nu \cross
\rnu$, namely the product of harmonic measure and reflected harmonic
measure on $\fmin \cross \fmin$. To make this argument precise, we use
the fact that as sample paths converge to the boundary, the relative
length of the sample path $\nhat{w_n}$ tends to infinity almost
surely.  Therefore we may estimate the probability that the pair of
stable and unstable foliations lie in a particular subset $A \cross B$
of $\fmin \cross \fmin$ in terms of the probability that the pair
$(w_n, w_n^{-1} )$ lies in a ``halfspace neighbourhood'' $N_r(A)
\cross N_r(B)$ of this set, where $N_r(X)$ is the union of all
halfspaces $H(1,x)$ with $\nhat{x} \ge r$ which intersect $X$. We will
be able to get upper and lower bounds for the different measures of $A
\cross B$ in terms of various $N_r$-neighbourhoods of the original
set, and the asymptotic results will then follow from taking limits as
$r$ tends to infinity, as $\bigcap N_r(X) = X$ for any closed subset
of $\fmin$.

The fact that the asymptotic distribution of stable and unstable
endpoints of \pA elements is $\nu \cross \rnu$ shows that the
probability that $w_n$ is \pA with neither endpoint in the disc set is
asymptotically one. Consider a pair of disjoint closed sets $A$ and
$B$ of $\fmin$, which are also both disjoint from the limit set of the
disc set. There is an upper bound on the distance of any geodesic with
one endpoint in $A$ and one endpoint in $B$ from the identity element
of the group. Furthermore, there is an upper bound on the diameter of
the nearest point projection of the disc set $\ds$ onto any such
geodesic.  The proportion of elements of a random walk of length $n$
which are \pA with stable foliation in $A$ and unstable foliation in
$B$ is asymptotic to $\nu(A)\rnu(B)$. As the relative length of $w_n$
grows linearly in $n$, as shown in \cite{maher2}, this shows that the
distance between $\ds$ and $w_n \ds$ also grows linearly for this
collection of elements.  However, the complement of the collection of
such sets $A \cross B$ in $\fmin \cross \fmin$ is just the union of
the diagonal with $\ds \cross \fmin$ and $\fmin \cross \ds$, and so
has measure zero. Therefore, the probability that the splitting
distance is roughly linear in $n$ tends to one asymptotically.

We now give a brief overview of the organization of the paper. In
Section \ref{section:preliminaries} we review some standard
definitions and previous results.  In section \ref{section:disc set},
we show that the limit set of the disc set has harmonic measure zero,
and then in Section \ref{section:independence}, we show that the joint
distribution of stable and unstable endpoints of \pA elements
converges to the product of harmonic and reflected harmonic measure on
$\fmin \cross \fmin$. Finally, in Section \ref{section:distance}, we
show that the distance of a random Heegaard splitting grows linearly
in the length of the random walk.

\subsection{Acknowledgements} 

I would like to thank the referee for many helpful comments and
suggestions. I am extremely grateful to the referee of \cite{maher1}
for pointing out a gap in an earlier version of these results.  This
work was partially supported by NSF grant DMS-0706764.

\section{Preliminaries} \label{section:preliminaries}

In this section we review some standard definitions and previous
results.

\subsection{The mapping class group and the complex of curves}

Let $\S$ be an orientable surface of finite type, i.e. a surface of
genus $g$ with $p$ marked points, usually referred to as punctures.
The mapping class group $G$ of $\S$ consists of orientation preserving
diffeomorphisms of $\S$ which preserve the punctures, modulo those isotopic to
the identity. For the purposes of this paper we shall assume that $\S$
is not a sphere with three or fewer punctures.

The collection of essential simple closed curves in the surface may be
made in to a simplicial complex, called the the \emph{complex of
  curves}, which we shall denote $\C(\S)$.  The vertices of this
complex are isotopy classes of simple closed curves in $\S$, and a
collection of vertices spans a simplex if representatives of the
curves can be realised disjointly in the surface.  The complex of
curves is a finite dimensional simplicial complex, but it is not
locally finite. We will write $\dc{x,y}$ for the distance in the
one-skeleton between two vertices $x$ and $y$ of the complex of
curves. We will fix a basepoint for the complex of curves, which we
shall denote $x_0$. The mapping class group acts by simplicial
isometries on the complex of curves. For certain sporadic surfaces the
definition above produces a collection of disconnected points, and so
a slightly different definition is used. If the surface is a torus
with at most one puncture, then two vertices are connected by an edge
if the corresponding simple closed curves may be isotoped to intersect
transversely exactly once. If the surfaces is a four punctured sphere,
then two vertices are connected by an edge if the corresponding simple
closed curves may be isotoped to intersect transversely in two points.
In both of these cases, the resulting curve complex is isomorphic to
the Farey graph.

A geodesic metric space is \emph{$\delta$-hyperbolic} if every
geodesic triangle is $\delta$-slim, i.e. each edge is contained in a
$\delta$-neighbourhood of the other two.  Masur and Minsky \cite{mm1}
have shown that the complex of curves is $\delta$-hyperbolic.

The mapping class group is finitely generated, so any choice of
generating set $A$ gives rise to a word metric on $G$, and any two
different choices of finite generating set give quasi-isometric word
metrics.  Given a group $G$, and a collection of subgroups $\H = \{
H_i \}_{i \in I }$, we define the \emph{relative length} of a group
element $g$ to be the length of the shortest word in the typically
infinite generating set $A \cup \H$.  This defines a metric on $G$
called the \emph{relative metric}, which depends on the choice of
subgroups $\H$.  We will write $\Grel$ to denote the group $G$ with
the relative metric, which we shall also refer to as the
\emph{relative space}. We will write $\dhat{a,b}$ for the relative
distance between two group elements $a$ and $b$, and we will also
write $\dhat{A,B}$ for the smallest distance between two sets $A$ and
$B$. We say a finitely generated group $G$ is \emph{weakly relatively
  hyperbolic}, relative to a finite list of subgroups $\H$, if the
relative space $\Grel$ is $\delta$-hyperbolic.

We may consider the relative metric on the mapping class group with
respect to the following collection of subgroups.  Let $\{\alpha_1,
\ldots, \alpha_n\}$ be a list of representatives of orbits of simple
closed curves in $\S$, under the action of the mapping class group.
Let $H_i = \fix{(\alpha_i)}$ be the subgroup of $G$ fixing $\alpha_i$.
Masur and Minsky \cite{mm1} have shown that the resulting relative
space is quasi-isometric to the complex of curves. The quasi-isometry
is given by $g \mapsto g(x_0)$, where $x_0$ is a fixed basepoint for
the complex of curves.  As the complex of curves is
$\delta$-hyperbolic, this shows that the mapping class group is weakly
relatively hyperbolic. The Gromov boundary of the complex of curves,
and hence the relative space, is the set formed by taking the
foliations in $\PMF$, the space of projective measured foliations on
the surface, whose trajectories contain no closed curves, and
forgetting the measures, as shown by Klarreich \cite{klarreich}, see
also Hamensd\"adt \cite{hamenstadt}.  This space, denoted $\fmin$, is
Hausdorff, but not compact. We will write $\Gbar$ for the union of
$\Grel$ with its Gromov boundary $\fmin$.

A \pA element $g$ of the mapping class group acts as a hyperbolic
isometry on the relative space, and so has a unique pair of fixed
points in the boundary, which we shall call the stable and unstable
foliations of $g$.  Following Ivanov \cite{ivanov}, we say that a
subgroup $H$ of the mapping class group is \emph{complete}, if the
stable and unstable foliations of \pA elements of $H$ are dense in
$\PMF$.

\subsection{Random walks}

We now review some background on random walks on groups, see for
example Woess \cite{woess}.  Let $G$ be the mapping class group of an
orientable surface of finite type, which is not a sphere with three or
fewer punctures, and let $\mu$ be a probability distribution on $G$.
We may use the probability distribution $\mu$ to generate a Markov
chain, or \emph{random walk} on $G$, with transition probabilities
$p(x,y) = \mu(x^{-1}y)$. We shall always assume that we start at time
zero at the identity element of the group.  The \emph{path space} for
the random walk is the probability space $(G^{\Z_+},\P)$, where
$G^{\Z_+}$ is the set of all infinite sequences of elements $G$. We
will write $w$ for an element of the path space $G^{\Z_+}$, which we
shall call a \emph{sample path}, and we will write $w_n$ for the
$n$-th coordinate of $w$, so $w_n$ is the position of the sample path
at time $n$.  The position of the random walk at time $n$ may be
described as the product $a_1 a_2 \dots a_n$, where the $a_i$ are the
\emph{increments} of the random walk, i.e. the $a_i$ are a sequence of
independent $\mu$-distributed random variables. Therefore the
distribution of random walks at time $n$ is given by the $n$-fold
convolution of $\mu$, which we shall write as $\mun{n}$, and we shall
write $p^{(n)}(x,y)$ for the probability that you go from $x$ to $y$
in $n$ steps. 

We shall always require that the group generated by the support of
$\mu$ is \emph{non-elementary}, which means that it contains a pair of
\pA elements with distinct fixed points in $\PMF$. We do not assume
that the probability distribution $\mu$ is symmetric, so the group
generated by the support of $\mu$ may be strictly larger than the
semi-group generated by the support of $\mu$. Throughout this paper we
will need to assume that the probability distribution $\mu$ has finite
support, and in order to show that the disc set has harmonic measure
zero, we will need to assume that the semi-group generated by the
support of $\mu$ contains a complete subgroup of the mapping class
group.

In \cite{maher1}, we showed that it followed from results of
Kaimanovich and Masur \cite{km} and Klarreich \cite{klarreich}, that a
sample path converges almost surely to a uniquely ergodic, and hence
minimal, foliation in the Gromov boundary of the relative space. This
gives a measure $\nu$ on $\fmin$, known as \emph{harmonic measure}.
The harmonic measure $\nu$ is $\mu$-stationary, i.e. \[ \nu(X) =
\sum_{g \in G} \mu(g)\nu(g^{-1}X). \]

\begin{theorem} \cites{km, klarreich, maher1}
\label{theorem:converge}
Consider a random walk on the mapping class group of an orientable
surface of finite type, which is not a sphere with three or fewer
punctures, determined by a probability distribution $\mu$ such that
the group generated by the support of $\mu$ is non-elementary. Then
a sample path $\{ w_n \}$ converges to a uniquely ergodic
foliation in the Gromov boundary $\fmin$ of the relative space $\Grel$
almost surely, and the distribution of limit points on the boundary
is given by a unique $\mu$-stationary measure $\nu$ on $\fmin$.
\end{theorem}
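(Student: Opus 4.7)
The plan is to combine three known ingredients: the Kaimanovich--Masur convergence theorem on Teichm\"uller space, Klarreich's identification of the Gromov boundary of the curve complex with $\fmin$, and the Masur--Minsky quasi-isometry between the relative space $\Grel$ and $\C(\S)$.

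First, I would invoke Kaimanovich--Masur \cite{km}: for any $\mu$ whose support generates a non-elementary subgroup of $G$, and for any basepoint $x$ in Teichm\"uller space, the sequence $w_n(x)$ converges almost surely in the Thurston compactification to a uniquely ergodic projective measured foliation $F(w) \in \UE \subseteq \PMF$. Moreover, the resulting hitting distribution is a $\mu$-stationary probability measure on $\PMF$, and it is the unique such measure giving full weight to $\UE$.

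Next, I would apply Klarreich \cite{klarreich}: the Gromov boundary of $\C(\S)$ is canonically identified with $\fmin$ via the map forgetting transverse measures, and Klarreich's comparison of topologies tells us that if a sequence $\alpha_n$ of vertices of $\C(\S)$, regarded as points of $\PMF$, has the property that every subsequential Thurston limit projects to the same class $[F] \in \fmin$, then $\alpha_n$ converges to $[F]$ in the Gromov boundary. Applied to $\alpha_n = w_n(x_0)$, this yields almost sure convergence $w_n(x_0) \to [F(w)]$ in $\C(\S) \cup \fmin$, since for uniquely ergodic $F$ there is only one lift to $\PMF$. Transporting along the quasi-isometry $g \mapsto g(x_0)$ from $\Grel$ to $\C(\S)$, which extends to a homeomorphism of Gromov boundaries, gives almost sure convergence in $\Gbar$. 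The measure $\nu$ on $\fmin$ is then defined as the pushforward of the Kaimanovich--Masur hitting measure under the forgetful map; it is automatically $\mu$-stationary, and its uniqueness follows from the uniqueness statement in Kaimanovich--Masur together with injectivity of the forgetful map on $\UE$.

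The main obstacle is the topological comparison in the middle step: the Thurston topology on $\PMF$ and the Gromov-boundary topology on $\fmin$ do not agree in general, even after forgetting measures, since there exist non-uniquely ergodic minimal foliations whose neighbourhoods behave differently in the two topologies. What rescues the argument is precisely Klarreich's theorem, which guarantees that the two topologies agree on the subset of uniquely ergodic foliations, combined with the essential feature of Kaimanovich--Masur that the almost sure limit lands in $\UE$.
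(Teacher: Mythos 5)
Your proposal follows essentially the same route the paper takes: the paper does not reprove this theorem but cites it as the combination, carried out in \cite{maher1}, of the Kaimanovich--Masur convergence and uniqueness results on $\PMF$ with Klarreich's identification of the Gromov boundary of the curve complex with $\fmin$, transported through the Masur--Minsky quasi-isometry. You also correctly isolate the genuine subtlety --- that the Thurston topology and the Gromov-boundary topology must be compared, and that Klarreich's theorem together with the unique ergodicity of the almost sure limit is what makes this work --- so your reconstruction matches the intended argument.
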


We remark that the measure $\nu$ is supported on the uniquely ergodic
foliations, which are a subset of $\PMF$, so it is possible to
consider $\nu$ as a measure on $\PMF$, with zero weight on all the
non-uniquely ergodic measures.

It will also be convenient to consider the \emph{reflected} random
walk, which is the random walk generated by the reflected measure
$\rmu$, where $\rmu(g) = \mu(g^{-1})$. We will write $\rnu$ for the
corresponding $\rmu$-stationary harmonic measure on $\fmin$. Note that
if $\mu$ has finite support, then so does $\rmu$, and if the
semi-group generated by the support of $\mu$ contains a complete
subgroup, then so does the semi-group generated by the support of
$\rmu$. We will write $H^+$ for the semi-group generated by the
support of $\mu$, and $H^-$ for the semi-group generated by the
support of $\rmu$.

\subsection{Coarse geometry}

In this section we review some useful facts about $\delta$-hyperbolic
spaces, see for example, Bridson and Haefliger \cite{bh}. Recall that
a geodesic metric space $X$ is $\delta$-hyperbolic, if for every
geodesic triangle with sides $\a, \b$ and $\g$, any side is contained
in a $\delta$-neighbourhood of the other two. We emphasize that we do
not require our metric space to be proper. We shall write $\dhat{a,b}$
for the distance between points $a$ and $b$ in $X$, and if $A$ and $B$
are subsets of $X$ we shall write $\dhat{A, B}$ for the infimum of the
distance between any pair of points $a \in A$ and $b \in
B$. Furthermore, we shall assume that our space $X$ has a basepoint,
which we shall denote $1$.

A $\delta$-hyperbolic space has a canonical boundary at infinity,
known as the Gromov boundary, which may be defined in terms of either
\qg rays, or the Gromov product. The Gromov boundary $\d X$ may be
defined as equivalence classes of \qg rays, where two rays are
equivalent if they stay a bounded Hausdorff distance apart.  Let $I$
be a connected subset of $\R$. A \emph{$(K,c)$-\qg} is a map $\g:I \to
\Grel$ such that
\[ \tfrac{1}{K} \dhat{\g_s,\g_t} -c \le \norm{s-t} \le K
\dhat{\g_s,\g_t} +c. \]
We shall write $\g_t$ for $\g(t)$, and if the domain $I$ of $\g$ is a
half-infinite interval, we shall say $\g$ is a \qg ray.  For the
purposes of this paper, our $\delta$-hyperbolic metric spaces will
always be either $\Grel$ or the complex of curves $\C(\S)$, and we can
always restrict attention to taking distances between group elements
in the first case, or simple closed curves in the second case. This
means that our spaces are effectively discrete spaces, so we may
replace the domain $\R$ with $\Z$, and the distance function takes
values in $\Z$. In this case, a $(K,c)$-\qg is a $(K',0)$-\qg, where
$K'$ may be chosen to be $Kc + K + c$.  Therefore we can always choose
$c$ to be zero, so we will just write $K$-\qg for a $(K,0)$-\qg.

In a $\delta$-hyperbolic space, \qgs are characterised by the property
that they lie in a bounded neighbourhood of a geodesic, and their
nearest point projections to the geodesic make coarsely linear
progress along the geodesic.

\begin{proposition} \cite{bh}*{Theorem 1.7, III.H} 
\label{prop:qg}
Let $\a$ be a $K$-quasigeodesic in a $\delta$-hyperbolic space, and
suppose there is a geodesic $\b$ connecting its endpoints. Then the
Hausdorff distance between $\a$ and $\b$ is at most $L$, for some
constant $L$ which only depends on $K$ and $\delta$.
\end{proposition}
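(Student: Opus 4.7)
This is the classical Morse stability lemma for \qgs in a $\delta$-hyperbolic space, and my plan is to follow the standard two-step argument. First I would bound the distance from points of $\b$ to $\a$, then deduce the reverse inclusion from this one-sided control plus the \qg inequality.

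For the first step, let $D = \sup\{ d(y, \a) : y \in \b \}$, and suppose for contradiction that $D$ is enormous compared with $K$ and $\delta$. Choose $y_0 \in \b$ nearly realising this supremum, and let $y_-, y_+ \in \b$ be the endpoints of the longest subsegment of $\b$ through $y_0$ on which the $1$-Lipschitz function $d(\,\cdot\,, \a)$ remains $\ge D/2$. Each of $y_\pm$ then lies at distance at least $D/2$ from $y_0$, and by construction admits a projection $\a_\pm \in \a$ with $d(y_\pm, \a_\pm) \le D/2$. The $K$-\qg property bounds the length of the subpath of $\a$ between $\a_-$ and $\a_+$ linearly in $d(\a_-, \a_+)$, which is itself linear in $D$ by the triangle inequality on $y_-, y_0, y_+$. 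Concatenating the two projection segments with this subpath of $\a$ produces a path $\g$ from $y_-$ to $y_+$ of length at most $C(K) \cdot D$.

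The next step is to invoke the standard exponential divergence estimate in $\delta$-hyperbolic spaces: any point of a geodesic joining two points lies within $\delta \log_2 \ell + O(1)$ of any rectifiable path of length $\ell$ with the same endpoints, a lemma proved by iteratively subdividing the path and applying the thin triangles condition. Applied to the geodesic subsegment of $\b$ from $y_-$ to $y_+$ through $y_0$, and to the path $\g$, this forces $y_0$ to lie within $\delta \log_2(C(K) D) + O(1)$ of $\g$. The nearest point of $\g$ to $y_0$ lies either on one of the two projection segments (each of length $\le D/2$) or on the middle subpath of $\a$, so in either case
\[ D = d(y_0, \a) \le \tfrac{D}{2} + \delta \log_2(C(K) D) + O(1). \]
Since the left side grows linearly and the right logarithmically in $D$, this bounds $D$ by a constant $D_0 = D_0(K,\delta)$.

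For the reverse inclusion, given $x \in \a$, I would take the nearest point $x' \in \b$, use the first step to find a point $x'' \in \a$ within $D_0$ of $x'$, and then use the $K$-\qg property to bound the length of the $\a$-subpath between $x$ and $x''$; combining these controls $d(x, \b)$ in terms of $K$, $\delta$ and $D_0$. The main obstacle is the quantitative choice of the subsegment $[y_-, y_+]$ in the first step: it must be long enough that $y_0$ really is interior, yet short enough that the logarithmic term dominates the linear one. Once this bookkeeping is carried out, the constant $L(K,\delta)$ drops out.
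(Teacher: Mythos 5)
The paper offers no proof of this proposition — it is quoted directly from Bridson--Haefliger — and your outline is the standard argument from that source: bound $\sup_{y\in\b} d(y,\a)$ by playing the logarithmic divergence lemma against a concatenated path of length linear in $D$, then deduce the reverse inclusion. But two of your steps, as written, would fail. The first is the choice of $y_\pm$ as the endpoints of the \emph{maximal} subsegment of $\b$ through $y_0$ on which $d(\cdot,\a)\ge D/2$. That choice does give $d(y_0,y_\pm)\ge D/2$ and $d(y_\pm,\a)\le D/2$, but it gives no upper bound whatsoever on $d(y_-,y_+)$: the maximal such subsegment could have length $100D$ or worse. The triangle inequality on $y_-,y_0,y_+$ only yields $d(\a_-,\a_+)\le D+d(y_-,y_+)$, so the claimed bound $\mathrm{length}(\g)\le C(K)\,D$ does not follow, and the inequality $D\le \tfrac{D}{2}+\delta\log_2(C(K)D)+O(1)$ is never established. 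You flag the choice of subsegment as ``the main obstacle,'' but the specific choice you made does not resolve it. The standard fix is to take $y_\pm$ at distance exactly $2D$ from $y_0$ along $\b$ (or the endpoints of $\b$ if those are closer): then $d(y_\pm,\a)\le D$ comes for free from the definition of $D$ as a supremum, $d(y_-,y_+)\le 4D$, the projection segments have length $\le D$ while $d(y_0,\cdot)\ge D$ on them, and both halves of the contradiction go through.

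The second problem is the reverse inclusion. Taking the nearest point $x'\in\b$ to $x\in\a$ and then a point $x''\in\a$ within $D_0$ of $x'$ does not control the $\a$-subpath between $x$ and $x''$: if $x$ sits in the middle of a long excursion of $\a$ away from $\b$, nothing forces $x''$ to lie close to $x$ in the parametrization, and that subpath can be arbitrarily long. The correct argument uses connectedness of $\b$: writing $x=\a_u$, the subsets of $\b$ lying within $D_0$ of $\a$ restricted to parameters $\le u$ and to parameters $\ge u$ cover $\b$ and each contain an endpoint, so some point $w\in\b$ lies within $D_0$ of points $\a_t$ and $\a_{t'}$ with $t\le u\le t'$. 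Then $d(\a_t,\a_{t'})\le 2D_0$, the $K$-\qg property bounds $|t-t'|$ and hence the length of the subpath of $\a$ containing $x$, and $d(x,\b)\le d(x,\a_t)+D_0$ is bounded in terms of $K$, $\delta$ and $D_0$, as required.
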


We will refer to the constant $L$ as the \emph{geodesic neighbourhood}
constant for the \qg. In an arbitrary $\delta$-hyperbolic space there
may be pairs of points in the Gromov boundary which are not connected
by geodesics. In the complex of curves, Masur and Minsky \cite{mm2}
show that there are special geodesics, which they call tight
geodesics, connecting any pair of points in the Gromov boundary. This
implies that in the relative space for the mapping class group,
$\Grel$, any two points in $\Grel \cup \d \Grel$ are connected by a
$\Ksplit$-\qg, for some constant $\Ksplit$ which works for all pairs
of points in the Gromov boundary. Such quasigeodesics may also be
constructed using train tracks, as discussed in Section
\ref{section:train tracks}.  Therefore using \pref{qg} with $K =
\Ksplit$, we will fix $\Kn$ to be a geodesic neighbourhood constant
for any $\Ksplit$-\qg.

The Gromov boundary may also be defined in terms of the Gromov
product.  The Gromov product of two points is $\gp{a}{b} =
\tfrac{1}{2}(\dhat{1,a} + \dhat{1,b} - \dhat{a,b})$, which in a
$\delta$-hyperbolic space is within $\delta$ of the distance from $1$
to a geodesic $[a,b]$. We say a sequence of points $a_n$ converges to
the boundary if $\gp{a_m}{a_n}$ tends to infinity as $m$ and $n$ tend
to infinity, and a sequence $a_n$ converges to the same limit point as
a convergent sequence $b_n$ if $\gp{a_n}{b_n}$ tends to infinity as
$n$ tends to infinity.  The Gromov product may be extended to points
in the boundary, i.e.  if $\a$ and $\b$ are points in the boundary,
then define 
\[ \gp{\a}{\b} = \sup \liminf_{m,n \to \infty} \gp{a_m}{b_n}, \] 
where the supremum is taken over all sequences $a_n \to \a$, and $b_n
\to \b$.

It is a standard property of $\delta$-hyperbolic spaces that the map
given by nearest point projection onto a geodesic is coarsely well
defined.  If two points $a$ and $b$ have nearest point projections
onto a geodesic $\g$ which are sufficiently far apart, then roughly
speaking, the fastest way to get from $a$ to $b$ is to head to the
closest point $p$ on the geodesic $\g$, then follow the geodesic to
the closest point $q$ to $b$, and then head back out to $b$. We will
call this path $[a,p] \cup [p,q] \cup [q,b]$ a \emph{\npp path} for
$a$ and $b$, relative to the geodesic $\g$.
If either $a$ or $b$ lie on $\g$, then the first or last segment of
the nearest point projection path will have zero length. We summarize
some useful properties of nearest point projection paths in the next
proposition, see for example Coornaert, Delzant and Papadopoulos
\cite{cdp}.

\begin{proposition} 
{\bf (nearest point projection paths are almost geodesic)}
\cite{maher2}*{Proposition 3.4}
\label{prop:double}
Let $a$ and $b$ have \npps $p$ and $q$ respectively to a geodesic. If $\dhat{p,q} >
14\delta$ then
\begin{enumerate}
\item $\dhat{a,b} \geqslant \dhat{a,p} + \dhat{p,q} + \dhat{q,b} -
  24\delta$ \label{eq:npp1}
\item A nearest point projection path $[a, p] \cup [p, q] \cup [q, b]$
  is contained in a $6\delta$-neighbourhood of a geodesic $[a,b]$.
\item A nearest point projection path $[a, p] \cup [p, q] \cup [q, b]$
  is a $(1, 24\delta)$-\qg, and hence a $\Knpp$-\qg in a discrete
  space, for a constant $\Knpp$ which only depends on $\delta$.
\end{enumerate}
\end{proposition}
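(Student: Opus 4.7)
The plan is to derive all three statements from a standard ``projection lemma'' in a $\delta$-hyperbolic space. First I would establish the auxiliary fact that if $p$ is the \npp of $a$ to a geodesic $\g$, and $x \in \g$ satisfies $\dhat{p,x} \ge 2\delta$, then every geodesic from $a$ to $x$ passes within $2\delta$ of $p$. This is a one-triangle chase: by the thin triangle condition on $\triangle apx$, the side $[a,x]$ is covered by $\delta$-neighbourhoods of $[a,p]$ and $[p,x]$, and a transition point straying more than $2\delta$ from $p$ would contradict the minimality of $\dhat{a,p}$.

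Applying this to the configuration of the proposition, I would consider the geodesic quadrilateral on $a, p, q, b$ and split it along the diagonal $[a,q]$. The assumption $\dhat{p,q} > 14\delta$ supplies the room needed to invoke the projection lemma at both ends, and successive thin-triangle chases on $\triangle apq$ and $\triangle aqb$ produce points $p', q'$ on a geodesic $[a,b]$ with $\dhat{p,p'} \le 6\delta$, $\dhat{q,q'} \le 6\delta$, and with the sub-arc of $[a,b]$ between them fellow-travelling $[p,q]$ at distance at most $2\delta$. This yields statement (2) immediately, since each of the three segments of the \npp path is now contained in a $6\delta$-neighbourhood of $[a,b]$.

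Statement (1) then follows from (2) by a length comparison: the points $a, p', q', b$ appear in this order along the geodesic $[a,b]$, so $\dhat{a,b}$ equals the sum of the three subsegment lengths, and the triangle inequality combined with the $6\delta$-bounds gives $\dhat{a,p'} \ge \dhat{a,p} - 6\delta$, $\dhat{p',q'} \ge \dhat{p,q} - 12\delta$, and $\dhat{q',b} \ge \dhat{q,b} - 6\delta$, producing the claimed additive defect of $24\delta$.

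For (3), the upper half of the \qg inequality is immediate from the triangle inequality along the path. The lower half, for an arbitrary pair of points $x, y$ on the \npp path, follows by observing that the sub-path from $x$ to $y$ is itself a \npp path for its own endpoints (since every point of $[a,p]$ projects to $p$, and every point of $[q,b]$ projects to $q$), so (1) applies verbatim to give $\dhat{x,y}$ bounded below by the parametric length minus $24\delta$. In the discrete setting a $(1, 24\delta)$-\qg is a $\Knpp$-\qg for a constant $\Knpp$ depending only on $\delta$, as noted earlier in the section. The geometric content is entirely standard; the only real work is bookkeeping, tracking the $\delta$-constants carefully through each thin-triangle chase so that the final defect comes out to exactly $24\delta$.
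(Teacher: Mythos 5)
Your proposal is correct in outline, but it does strictly more work than the paper: for parts (1) and (2) the paper gives no argument at all, simply importing them verbatim from \cite{maher2}*{Proposition 3.4}, and then observes that (3) is a direct consequence of (1). Your derivation of (3) coincides with the paper's, and your key observation there --- that a subpath of a \npp path is again a \npp path for its own endpoints, because any $x \in [a,p]$ still has $p$ as a nearest point on the geodesic (otherwise a shortcut through a closer point of the geodesic would contradict minimality of $\dhat{a,p}$) --- is exactly what makes the deduction ``direct''; you should just also handle the short-gap case (e.g.\ $x \in [a,p]$, $y \in [p,q]$ with $\dhat{p,y} \le 14\delta$, where (1) does not apply but $\dhat{x,y} \ge \dhat{x,p}$ gives the bound directly). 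For (1) and (2) your quadrilateral decomposition along the diagonal $[a,q]$, driven by the projection lemma, is the standard argument and is structurally sound, but the constants as you state them do not quite close: the usual transition-point chase gives that $[a,x]$ passes within $3\delta$ (not $2\delta$) of $p$, and knowing $\dhat{p,p'} \le 6\delta$ with $p' \in [a,b]$ puts $[a,p]$ only in a $7\delta$-neighbourhood of $[a,b]$ after one more thin-triangle application, not $6\delta$. Since the proposition asserts the specific constants $24\delta$ and $6\delta$ (which the paper inherits from the cited reference rather than rederiving), you would either need to tighten the bookkeeping or accept slightly larger constants; this does not affect any use of the proposition in the paper, where only the existence of $\delta$-dependent constants matters.
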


\begin{proof}
Parts \eqref{eq:npp1} and $(2)$ are taken directly from \cite{maher2}*{Proposition 3.4}, and
part $(3)$ is a direct consequence of part $(1)$. 
\end{proof}

The proposition above gives a quantitative version of the fact that
nearest point projections are coarsely distance decreasing, i.e.
$\dhat{p,q} \leqslant \dhat{a,b} + 24\delta$, using the notation
above.

It will be useful to know that nearest point projections to
close paths are close. Given a geodesic $[x,y]$, and a path $\g$ from
$x$ to $y$ which stays in a bounded neighbourhood of the geodesic,
then the nearest point projection of any point $z$ to $[x,y]$ is a
bounded distance from the nearest point projection of $z$ to $\g$.

\begin{proposition} 
{\bf (nearest point projections to close paths are close)}
\cite{maher2}*{Proposition 3.6}
\label{prop:close}
Let $[x,y]$ be a geodesic, and let $\g$ be any path from $x$ to $y$
contained in a $K$-neighbourhood of $[x,y]$. Then for any point $z$,
the closest point to $z$ on $[x,y]$ is distance at most $3K + 6\delta$
from the closest point on $\g$ to $z$. 
\end{proposition}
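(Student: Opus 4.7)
Let $p$ be a closest point to $z$ on $[x,y]$, and $q$ a closest point to $z$ on $\g$; choose $q'\in[x,y]$ with $\dhat{q,q'}\le K$. The plan is in two steps: first, produce a point $p^*\in\g$ close to $p$; second, use the $\delta$-hyperbolic geometry of the geodesic $[x,y]$ to bound $\dhat{p,q'}$, concluding by the triangle inequality $\dhat{p,q}\le\dhat{p,q'}+\dhat{q',q}$.

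For the first step, I would parameterise $\g$ as a discrete sequence $g_0=x,g_1,\ldots,g_n=y$ with $\dhat{g_i,g_{i+1}}\le 1$, and for each $i$ pick $c_i\in[x,y]$ closest to $g_i$, so that $\dhat{g_i,c_i}\le K$. Consecutive projections satisfy $\dhat{c_i,c_{i+1}}\le 1+24\delta$ by the coarse Lipschitz property of nearest point projection, an immediate consequence of \pref{double}(1) applied to $g_i,g_{i+1}$. Since $c_0=x$ and $c_n=y$, a discrete intermediate value argument produces an index $i^*$ with $\dhat{c_{i^*},p}$ at most half this jump bound, and hence $p^*:=g_{i^*}\in\g$ satisfies $\dhat{p,p^*}\le K+O(\delta)$.

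For the second step, since $q$ minimises distance from $z$ to $\g$ and $p^*\in\g$, we have $\dhat{z,q}\le\dhat{z,p^*}\le\dhat{z,p}+\dhat{p,p^*}$, and $\dhat{z,q'}\le\dhat{z,q}+K$ by the triangle inequality through $q$. Combining these yields $\dhat{z,q'}-\dhat{z,p}\le 2K+O(\delta)$. Now $p$ is the nearest point projection of $z$ onto $[x,y]$, while $q'\in[x,y]$ is its own projection. Applying \pref{double}(1) to the pair $z,q'$ therefore gives $\dhat{z,q'}\ge\dhat{z,p}+\dhat{p,q'}-24\delta$ whenever $\dhat{p,q'}>14\delta$, and in either case $\dhat{p,q'}\le 2K+O(\delta)$. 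The triangle inequality then gives $\dhat{p,q}\le 3K+O(\delta)$.

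The main obstacle is tracking constants tightly enough to recover the stated $3K+6\delta$ rather than the looser $3K+C\delta$ that the naive argument above produces. Most of the slack enters in the intermediate-value step, where the use of \pref{double}(1) inflates $\delta$-errors; obtaining the sharp constant would likely require replacing this step by a direct slim-quadrilateral argument on the vertices $z,p,q,q'$, exploiting that the two "sides" $[p,q']$ and $[q',q]$ together realise a short connection from $p$ to $q$ through $[x,y]$.
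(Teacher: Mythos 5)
The paper does not actually prove this proposition -- it is quoted verbatim from \cite{maher2}*{Proposition 3.6} -- so there is no internal proof to compare against; I will assess your argument on its own terms. Structurally it is sound and it is the standard argument: (i) a discrete intermediate-value argument along $\g$, using the coarse Lipschitz property of projection from \pref{double}(1), produces $p^*\in\g$ with $\dhat{p,p^*}\le K+\tfrac{1}{2}(1+24\delta)$; (ii) minimality of $q$ on $\g$ and the reverse triangle inequality for projections give $\dhat{p,q'}\le \dhat{z,q'}-\dhat{z,p}+24\delta$; (iii) the triangle inequality through $q'$ finishes. Each step is justified by results available in the paper, and I see no logical gap.

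The one genuine shortfall is the one you flag yourself: chasing your constants gives roughly $\dhat{p,q}\le 3K+36\delta+\tfrac{1}{2}$, not the stated $3K+6\delta$. The loss enters exactly where you say it does, namely in using \pref{double}(1) (with its $24\delta$ defect and $14\delta$ threshold) both for the jump bound on consecutive projections and for the lower bound on $\dhat{z,q'}$; the sharp version of "a nearest point projection path is almost geodesic'' in a $\delta$-slim space has defect $4\delta$, and using it in both places is what recovers a constant of the order of $6\delta$. For the purposes of this paper the discrepancy is harmless in substance -- every downstream application only needs a bound of the form $3K+C\delta$ with $C$ depending on nothing -- but note that the specific value $6\delta$ does propagate into later numerical constants (e.g.\ the bound $\dhat{p_1,q_1}\le 24\delta$ in the proof of \pref{disjoint} is exactly $3K+6\delta$ with $K=6\delta$), so with your constants those numbers, and the definitions of $A$, $B$, $\Knbd$, etc., would all need to be inflated accordingly. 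So: correct approach, correct structure, but as written it proves a quantitatively weaker statement than the one claimed.
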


We now show that nearest point projections to geodesics may be defined
for points in the Gromov boundary.  Let $[x,y]$ be a geodesic, and let
$D$ be the diameter of $1 \cup [x,y]$. Let $\l$ be a point in the
Gromov boundary, and let $\{ a_n \}$ be a sequence which converges to
$\l$. We define a \npp of $\l$ to $[x,y]$ to be a \npp of $a_m$ to
$[x,y]$, where $a_m$ is an element of the sequence $\{ a_n \}$ with
$\gp{a_m}{a_n} \ge D + 8 \delta$ for all $n \ge m$. We now show that
this \npp is coarsely well-defined for points in the boundary.

\begin{proposition} 
{\bf (\npp is coarsely well-defined on $X \cup \d X$)}
\label{prop:npp boundary}
Let $[x,y]$ be a geodesic, and let $\l \in \d X$. If $p$ and $q$ are
\npps of $\l$ to $[x,y]$, then $\dhat{p,q} \le 15
\delta$. Furthermore, for any sequence $\{ a_n \}$ which converges to
$\l$, there is an $N$ such that if $p$ is a \npp of $\l$ to $[x, y]$,
and $p_n$ is a \npp of $a_n$ to $[x,y]$, then $\dhat{p,p_n} \le 15
\delta$, for all $n \ge N$.
\end{proposition}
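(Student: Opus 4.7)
The plan is to use two standard facts about $\delta$-hyperbolic spaces: the Gromov product $\gp{a}{b}$ agrees with $\dhat{1, [a,b]}$ up to an additive $\delta$, and \pref{double}, which says \npp paths are \qgs lying in the $6\delta$-neighbourhood of the geodesic they approximate whenever the projected image exceeds $14\delta$ in diameter. The geometric point is simply that when $\gp{a}{b}$ is large, the geodesic $[a,b]$ sits far from $1$, hence far from $[x,y]$, and so any \npp path through $[x,y]$ would have to backtrack a long way between $p_a$ and $p_b$, which is forbidden by the quasigeodesic conclusion of \pref{double}.

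The crux is the following lemma: if $\gp{a}{b} \ge D + 8\delta$, then \npps $p_a, p_b$ of $a, b$ to $[x,y]$ satisfy $\dhat{p_a, p_b} \le 14\delta$. The Gromov product bound gives $\dhat{1, [a,b]} \ge D + 7\delta$, and since $[x,y] \subset B(1, D)$, we obtain $\dhat{[a,b], [x,y]} \ge 7\delta$. If $\dhat{p_a, p_b} > 14\delta$, then \pref{double} places the \npp path $[a, p_a] \cup [p_a, p_b] \cup [p_b, b]$ inside the $6\delta$-neighbourhood of $[a,b]$, forcing $p_a$ to lie within $6\delta$ of $[a,b]$, which contradicts the $7\delta$ separation.

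Both assertions of the proposition then follow by applying the lemma after securing the Gromov product hypothesis. For the first assertion, two \npps $p$ and $q$ of $\l$ arise from elements $a_m$ and $a_k$ of the defining sequence with $m \le k$ say; the defining condition then yields $\gp{a_m}{a_k} \ge D + 8\delta$ directly, and the lemma gives $\dhat{p, q} \le 14\delta \le 15\delta$. For the second assertion, let $\{b_n\} \to \l$ be an arbitrary sequence with $p_n$ a \npp of $b_n$, and let $p$ be a \npp of $\l$ arising from some $a_m$. Since both $\{a_n\}$ and $\{b_n\}$ converge to the same boundary point, $\gp{a_m}{b_n} \to \infty$ as $n \to \infty$, so there is $N$ with $\gp{a_m}{b_n} \ge D + 8\delta$ for $n \ge N$, and the lemma applied to $(a_m, b_n)$ yields $\dhat{p, p_n} \le 14\delta \le 15\delta$.

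No serious conceptual obstacle arises; the argument is essentially bookkeeping once the key lemma is established. The main care needed is in the approximation constant between the Gromov product and $\dhat{1, [a,b]}$, which must be tracked to guarantee the $7\delta$ separation of $[a,b]$ from $[x,y]$; the buffer $D + 8\delta$ in the definition is precisely what makes the book balance.
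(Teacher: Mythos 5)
Your key lemma is exactly the paper's core argument, arranged contrapositively: a large Gromov product forces $[a,b]$ to be far from $[x,y]$, while widely separated projections would, via \pref{double}, drag the \npp path (and hence $[x,y]$) into a $6\delta$-neighbourhood of $[a,b]$. That part is fine. The problem is in how you feed the two assertions into the lemma. In the second assertion you claim that $\gp{a_m}{b_n} \to \infty$ as $n \to \infty$ because both sequences converge to $\l$. This is false: for a \emph{fixed} point $a_m$ one always has $\gp{a_m}{b_n} \le \dhat{1, a_m}$, so the Gromov product with a fixed point is bounded. Convergence to a common boundary point only gives $\gp{a_k}{b_n} \to \infty$ as \emph{both} indices tend to infinity. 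To bound $\gp{a_m}{b_n}$ from below you would need the four-point inequality $\gp{a_m}{b_n} \ge \min(\gp{a_m}{a_k}, \gp{a_k}{b_n}) - \delta$, which costs an extra $\delta$ and leaves you with only $\gp{a_m}{b_n} \gtrsim D + 7\delta$ — not enough to run your lemma as stated, since then $\dhat{[x,y],[a_m,b_n]} \ge 6\delta$ no longer strictly contradicts the $6\delta$-neighbourhood conclusion of \pref{double}. The constants could be re-balanced, but as written the step fails for a wrong reason, not merely a sloppy one.

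A second, related issue: in the first assertion you assume $p$ and $q$ arise from the \emph{same} defining sequence, but the definition of a \npp of $\l$ allows $p$ and $q$ to be built from two different sequences converging to $\l$, in which case the hypothesis $\gp{a_m}{a_k} \ge D + 8\delta$ is not available "directly." The paper resolves both difficulties at once with a small device you are missing: given two sequences converging to $\l$, interleave them into a single alternating sequence, which still converges to $\l$; far enough along, all pairwise Gromov products in the alternating sequence exceed $D + 8\delta$, so the lemma applies uniformly to every pair and yields the $15\delta$ bound without ever fixing one index. You should replace your limit claim with this alternating-sequence argument (or carefully redo the constants with the four-point inequality).
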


\begin{proof}
Let $D$ be the diameter of $1 \cup [x,y]$, and let $\{ a_n \}$ be a
sequence which converges to $\l$. As $a_n \to \l$, the Gromov
product $\gp{a_m}{a_n} \to \infty$, so there is a number $N$ such that
$\gp{a_m}{a_n} \ge D + 8 \delta$ for all $m,n \ge N$. Let $p_n$ be a
\npp of $a_n$ to $[x,y]$. If $\dhat{p_m, p_n} \ge 15 \delta$, then the
\npp path $[a_m, p_m] \cup [p_m, p_n] \cup [p_n, a_n]$ lies in a $6
\delta$-neighbourhood of a geodesic $[a_m, a_n]$, by \pref{double}, so
in particular $\dhat{[x,y], [a_m, a_n]} \le 6 \delta$. Therefore the
Gromov product $\gp{a_m}{a_n} \le D + 7 \delta$, a contradiction.
This shows that $\dhat{p_m, p_n} \le 15 \delta$. If $\{ a_n \}$ and
$\{ b_n \}$ are two sequences which converge to $\l$, then the
sequence which alternates between the terms of $a_n$ and $b_n$ also
converges to $\l$. Applying the argument above to the alternating
sequence implies that $\dhat{p,q} \le 15 \delta$ for any \npps of $\l$
to $[x,y]$, as required. The final statement follows by choosing a
constant $N$ as above for the alternating sequence.
\end{proof}

Finally, as an analogue of \pref{double}, we show that subsegments of
a bi-infinite \qg are contained in a bounded neighbourhood of a \npp
path.

\begin{proposition} 
{\bf (\npp paths for \qgs)}
\label{prop:npp qgs}
Let $[x,y]$ be a geodesic, let $\g$ be a $K$-\qg with endpoints $\l_1$
and $\l_2$ in $\d X$, and let $p_i$ be a \npp of $\l_i$ to $[x,y]$.
Then there are constants $K'$ and $L$, where $K'$ only depends on
$\delta$, and $L$ only depends on $K$ and $\delta$, such that if
$\dhat{p_1, p_2} \ge K'$, then for all $n$ sufficiently large, the
segment of $\g$ between $\g_n$ and $\g_{-n}$ is contained in an
$L$-neighbourhood of the \npp path $[\g_n, p_1] \cup [p_1, p_2] \cup
[p_2, \g_{-n}]$. In particular, $\dhat{\g_n, \g_{-n}} \ge \dhat{p_1,
  p_2} - 4L$.
\end{proposition}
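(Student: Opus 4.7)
The plan is to reduce everything to \pref{double} and \pref{qg} by replacing the boundary \npps $p_i$ with honest \npps of the interior points $\g_{\pm n}$, and then apply the standard quasigeodesic stability results.

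First I would choose $N$ large enough (using \pref{npp boundary}) that for every $n \ge N$ the \npps $q_1^{(n)}$ of $\g_n$ and $q_2^{(n)}$ of $\g_{-n}$ to $[x,y]$ satisfy $\dhat{p_i, q_i^{(n)}} \le 15 \delta$. Take $K' = 14 \delta + 30 \delta = 44 \delta$, so that the hypothesis $\dhat{p_1, p_2} \ge K'$ forces $\dhat{q_1^{(n)}, q_2^{(n)}} > 14 \delta$. This is exactly the hypothesis needed to run \pref{double} on the pair $\g_n, \g_{-n}$ relative to $[x,y]$, and it gives two consequences: the \npp path $P_n = [\g_n, q_1^{(n)}] \cup [q_1^{(n)}, q_2^{(n)}] \cup [q_2^{(n)}, \g_{-n}]$ is contained in a $6\delta$-neighbourhood of a geodesic $[\g_n, \g_{-n}]$, and
\[ \dhat{\g_n, \g_{-n}} \ge \dhat{\g_n, q_1^{(n)}} + \dhat{q_1^{(n)}, q_2^{(n)}} + \dhat{q_2^{(n)}, \g_{-n}} - 24 \delta. \]

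Next I would apply \pref{qg} to the subsegment of $\g$ between $\g_{-n}$ and $\g_n$: since this is a $K$-\qg joining $\g_{-n}$ and $\g_n$, it lies in an $L_1$-neighbourhood of the geodesic $[\g_n, \g_{-n}]$, with $L_1$ depending only on $K$ and $\delta$. Combining with the previous step, the segment of $\g$ lies in an $(L_1 + 6\delta)$-neighbourhood of $P_n$. Finally, since $\dhat{p_i, q_i^{(n)}} \le 15 \delta$, the \npp path $P_n$ lies within $15 \delta$ of the \npp path $[\g_n, p_1] \cup [p_1, p_2] \cup [p_2, \g_{-n}]$ in Hausdorff distance. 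Setting $L = L_1 + 6\delta + 15 \delta$ yields the neighbourhood statement.

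For the last inequality, the displayed inequality above together with $\dhat{q_i^{(n)}, p_i} \le 15 \delta$ gives
\[ \dhat{\g_n, \g_{-n}} \ge \dhat{p_1, p_2} - 2 \cdot 15 \delta - 24 \delta = \dhat{p_1, p_2} - 54 \delta, \]
and since $L \ge 14 \delta$ by construction, this is at least $\dhat{p_1, p_2} - 4L$ as claimed.

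The only subtle point is the bookkeeping around replacing the boundary projections $p_i$ by the honest projections $q_i^{(n)}$: one needs $K'$ chosen so that even after losing $15 \delta$ on each side, the gap between projections still exceeds the $14 \delta$ threshold required by \pref{double}. The qualifier ``for all $n$ sufficiently large'' is exactly the threshold $N$ from \pref{npp boundary} applied to each of the two sequences $\{\g_n\}$ and $\{\g_{-n}\}$; beyond $N$ both replacements are uniformly controlled. Everything else is a routine triangle-inequality chase once these constants are set.
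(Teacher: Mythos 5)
Your proof is correct and follows essentially the same route as the paper's: replace the boundary projections $p_i$ by the projections $q_i^{(n)}$ of $\g_{\pm n}$ via \pref{npp boundary}, choose $K'$ so the $30\delta$ loss still leaves the gap above the $14\delta$ threshold of \pref{double}, and combine \pref{qg} with \pref{double}. The only (harmless) bookkeeping slips are that $K'=44\delta$ gives $\dhat{q_1^{(n)},q_2^{(n)}}\ge 14\delta$ rather than strictly greater (the paper takes $45\delta$), and the passage from the $q$-path to the $p$-path costs about $16\delta$ by thin triangles rather than $15\delta$ — both absorbed by taking $L$ slightly larger.
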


\begin{proof}
The sequence $\g_n \to \l_1$ as $n \to \infty$, so by
\pref{npp boundary}, there is an $N_1$ such that $\dhat{p_1, q_n} \le
15 \delta$. Similarly, as $\g_{-n} \to \l_2$ as $n \to \infty$, there is
an $N_2$, such that $\dhat{p_2, q_{-n}} \le 15 \delta$, for all $n \ge
N_2$. We shall consider segments of $\g$ between $\g_n$ and $\g_{-n}$
with $n \ge N_1 + N_2$, and we shall choose $K' = 45 \delta$.

By \pref{qg}, there is an $L'$, which only depends on $K$ and
$\delta$, such that the segment of $\g$ between $\g_n$ and $\g_{-n}$
is contained in an $L'$-neighbourhood of a geodesic $[\g_n, \g_{-n}]$.
As $\dhat{p_1, q_n} \le 15 \delta$ and $\dhat{p_2, q_{-n}} \le 15
\delta$, this implies $\dhat{q_n, q_{-n}} \ge 15 \delta$, so by
\pref{double}, $[\g_n, \g_{-n}]$ is contained in a $6
\delta$-neighbourhood of $[\g_n, q_n] \cup [q_n, q_{-n}] \cup [q_{-n},
\g_{-n}]$.  By thin triangles, the segment of $\g$ between $\g_n$ and
$\g_{-n}$ is contained in an $L' + 22 \delta$-neighbourhood of the
\npp path $[\g_n, p_1] \cup [p_1, p_2] \cup [p_2, \g_{-n}]$.
Therefore we may choose $L = L' + 22 \delta$, which only depends on
$K$ and $\delta$, as required.
\end{proof}

\subsection{Train tracks} \label{section:train tracks}

We review some standard definitions concerning train tracks, as
described in Penner and Harer \cite{ph}. A \emph{train track} $\t$ on
a surface $S$ is an embedded $1$-complex, such that each edge, called
a \emph{branch}, is a smoothly immersed path with well-defined tangent
vectors at the endpoints, and at each vertex, called a \emph{switch},
the incident edges are mutually tangent. Every vertex must have edges
leaving in both possible directions. We require that the double of
every complementary region, with cusps deleted, has negative Euler
characteristic. In particular this means that the complementary
regions may not be monogons or bigons.

We say $\t$ is \emph{large} if the complementary regions of $\t$ are
polygons. We say $\t$ is \emph{maximal} if the complementary regions
of $\t$ are triangles. A train track is \emph{generic} if every switch
is trivalent. A trivalent switch has two branches leaving in one
direction, and a single branch leaving in the opposite direction. We
will call the pair of branches the \emph{incoming} branches, and the
single branch in the opposite direction the \emph{outgoing} branch.  A
\emph{train route} is a smooth path in $\t$. We say $\t$ is
\emph{recurrent} if every branch is contained in a closed train route.
A train track $\t$ is \emph{transversely recurrent} if every branch of
$\t$ is crossed by a simple closed curve $\a$ which intersects $\t$
transversely and \emph{efficiently}, i.e. $\a \cup \t$ has no bigon
components.  If $\t$ is both recurrent and transversely recurrent,
then we say that $\t$ is \emph{birecurrent}. We write $\s \prec \t$ if
$\s$ is a train track \emph{carried by} $\t$, which means that there
is a map homotopic to the identity on $\S$ such that every train route
on $\s$ is taken to a train route on $\t$.

An assignment of non-negative real numbers to each branch of the train
track determines a transverse measure supported on the train track, as
long as the switch condition is satisfied at each switch. The switch
condition says that the sum of weights on the incoming branches at a
switch is equal to the sum of weights on the outgoing branches.  Let
$N(\t) \subset \PMF(\S)$ be the polyhedron of projective classes of
measures supported on $\t$. The vertices of this polyhedron correspond
to simple closed curves carried by $\t$, which are called \emph{vertex
  cycles}. As there are only finitely many combinatorial classes of
train track for a given surface, the diameter of the vertex cycles is
bounded, and we define the distance between two train tracks,
$\dc{\s,\t}$ to be the distance between their vertex cycles in the
complex of curves. If the train track $\t$ is maximal and recurrent,
then $N(\t)$ has the same dimension as $\PMF(\S)$, and so the interior
is an open set.

Given a branch in a train track which meets the switch at each of its
endpoints as an outgoing branch, we can perform a move on this local
configuration, as shown in Figure \ref{picture11} below. This move is
called a \emph{split}.

\begin{figure}[H] 
\begin{center}
\epsfig{file=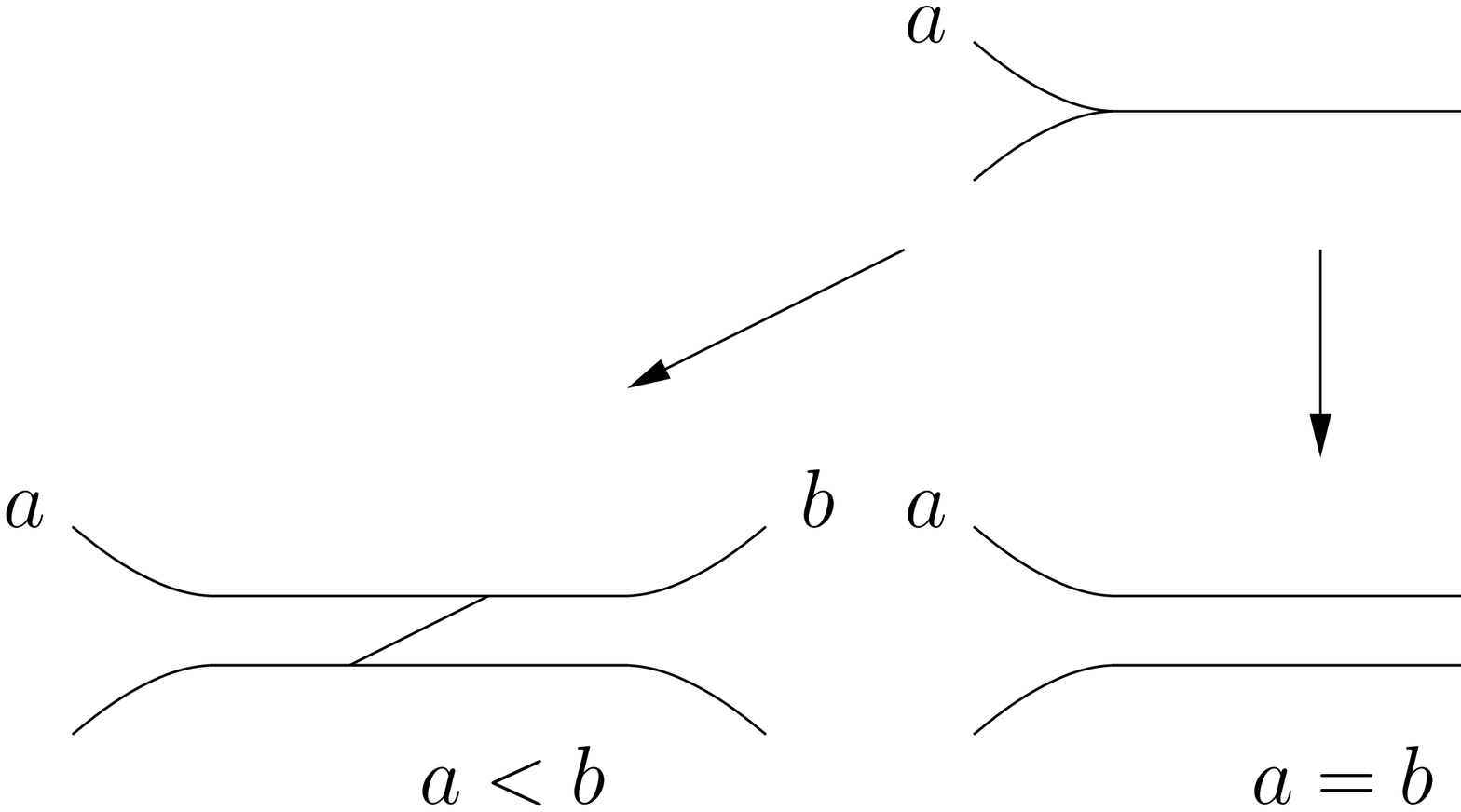, height=100pt}
\end{center}
\caption{Splitting a train track.}\label{picture11}
\end{figure}

A split gives rise to a new train track $\t'$ which is carried by the
original train track $\t$. The polyhedron of projective measures
$N(\t')$ is obtained by splitting the polyhedron $N(\t)$ along the
hyperplane $a=b$, in the notation of Figure \ref{picture11}. Given a
measured foliation $\l$ carried by a train track $\t$, a splitting
sequence for $\l$ is a sequence of train tracks $\t = \t_1 \prec \t_2
\prec \cdots$ obtained successive splits, such that each $\t_i$
carries $\l$.

A sequence of train tracks gives rise to a sequence in the complex of
curves, consisting of the vertex cycles of the train tracks, and hence
a sequence in the relative space.  Masur and Minsky \cite{mm1} show
that these splitting sequences are contained in uniformly bounded
neighbourhoods of geodesics in the relative space.  Splitting
sequences need not make uniform progress in the relative space, but
one may always pass to a subsequence which does. In particular, this
gives an explicit construction of $\Ksplit$-quasigeodesics between any
pair of points in $\Gbar$, for some constant $\Ksplit$, which only
depends on the surface.

\begin{theorem} 
\cite{mm1}
\label{theorem:uniform}
Let $\Gbar$ be the union of the relative space of the mapping class
group with its Gromov boundary. Then there is a constant $\Ksplit$,
which depends only on the surface, such that any pair of points in $\Gbar$ are
connected by a $\Ksplit$-\qg.
\end{theorem}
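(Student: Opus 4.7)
The plan is to construct the quasigeodesics using splitting sequences of maximal generic birecurrent train tracks, exploiting the fact, established in \cite{mm1}, that such sequences project into uniformly bounded neighbourhoods of geodesics in the complex of curves. The constants arising in the Masur--Minsky splitting sequence theorem depend only on the surface $\S$, ultimately because there are only finitely many combinatorial types of such train tracks up to the action of the mapping class group. Once one has a splitting sequence whose vertex cycles fellow-travel a geodesic with uniform constants, a standard pigeonholing argument yields a subsequence making coarsely linear progress in $\C(\S)$, hence a quasigeodesic with uniform constants.

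I would handle three cases. When both endpoints lie in $\Grel$, one pulls back a geodesic in $\C(\S)$ between the basepoint images under the Masur--Minsky quasi-isometry $g \mapsto g(x_0)$, producing a $K$-quasigeodesic for some $K$ depending only on $\S$. When one endpoint is a minimal foliation $\l \in \fmin$ and the other lies in $\Grel$, one chooses a maximal generic birecurrent train track $\t$ carrying $\l$ whose vertex cycles lie uniformly close to the interior point, and then performs a splitting sequence $\t = \t_1 \prec \t_2 \prec \cdots$ with each $\t_i$ carrying $\l$. The associated sequence of vertex cycles converges to $\l$ in $\fmin$, and passing to a subsequence making uniform progress in the relative space gives a quasigeodesic ray. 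When both endpoints are minimal foliations $\l_1, \l_2 \in \fmin$, one performs the construction in both directions from a common initial track and concatenates, obtaining a bi-infinite quasigeodesic.

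The main obstacle is ensuring that the quasigeodesic constant is truly uniform across all choices of endpoints in $\Gbar$. A naive splitting sequence for a fixed $\l$ may stall for arbitrarily many steps without producing progress in $\C(\S)$, so one must pass to a well-chosen subsequence to extract a genuine quasigeodesic. The key point is that both the bound on how far a splitting sequence deviates from a geodesic and the bound on how quickly a suitable subsequence can be forced to progress depend only on the combinatorial data of the surface, not on the specific foliations chosen, since the set of maximal generic birecurrent tracks modulo the mapping class group is finite. Combining this uniformity across the three cases yields a single constant $\Ksplit$ that works for every pair of points in $\Gbar$.
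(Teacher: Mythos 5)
The paper does not actually prove this statement --- it is imported from \cite{mm1}, and the sketch immediately preceding it in Section \ref{section:train tracks} is precisely your construction: splitting sequences of train tracks lie in uniformly bounded neighbourhoods of geodesics, one passes to a subsequence making uniform progress, and uniformity of the constants comes from the finiteness of combinatorial types of train tracks. Your proposal matches this approach; the one point to treat with care (which the paper sidesteps by also invoking tight geodesics from \cite{mm2}) is the bi-infinite case, where the common initial track must be chosen so that the Gromov product of the two foliations based at its vertex cycles is bounded, since concatenating two quasigeodesic rays at an arbitrary common point need not yield a quasigeodesic with a uniform constant.
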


\subsection{Halfspaces}

A \emph{halfspace} $H(x,y)$ in a metric space consists of all those
points at least as close to $y$ as to $x$. We will be most concerned
with halfspaces $H(1,x)$, where the initial point is the identity
element in the mapping class group, so we will state some facts in
terms of halfspaces $H(1,x)$, even if they are true for more general
halfspaces.  Halfspaces do not necessarily behave well under
quasi-isometry, so we will always choose to work in the relative
space, and only apply results to the complex of curves if they are
independent of halfspaces.

We now review some useful properties of halfspaces in
$\delta$-hyperbolic spaces. In a $\delta$-hyperbolic metric space the
nearest point projection of a halfspace $H(x,y)$ to a geodesic $[x,y]$
connecting $x$ and $y$ lies in a bounded neighbourhood of the
halfspace.

\begin{proposition} 
\cite{maher2}*{Proposition 3.7}
\label{prop:half}
The nearest point projection of $H(x,y)$ to a geodesic $[x,y]$ is
contained in a $3\delta$-neighbourhood of $H(x,y)$.
\end{proposition}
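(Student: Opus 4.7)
The plan is to reduce to showing that for any $z \in H(x,y)$, any nearest point projection $p$ of $z$ onto $[x,y]$ satisfies $\dhat{x,p} \ge \tfrac{1}{2}\dhat{x,y} - 3\delta$. This suffices because, writing $m$ for the midpoint of $[x,y]$, the subsegment $[m,y]$ is contained in $H(x,y)$, so such a bound places $p$ either inside $H(x,y)$ outright or within $3\delta$ of $m \in H(x,y)$.

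The main technical step I would carry out is the standard Gromov-product estimate for nearest point projections:
\[ \dhat{x,p} \ge \tfrac{1}{2}\bigl(\dhat{x,y}+\dhat{x,z}-\dhat{y,z}\bigr) - 2\delta. \]
To prove this, I would consider the geodesic triangle on vertices $x,y,z$ and the internal point $a \in [x,y]$ characterized by $\dhat{x,a} = \tfrac{1}{2}(\dhat{x,y}+\dhat{x,z}-\dhat{y,z})$. By the Rips thin-triangle condition, $a$ lies within $\delta$ of some point $b'$ on $[x,z] \cup [y,z]$. If $b' \in [x,z]$, then $\dhat{x,b'} \ge \dhat{x,a} - \delta$, so $\dhat{z,b'} = \dhat{x,z} - \dhat{x,b'} \le \tfrac{1}{2}(\dhat{x,z}+\dhat{y,z}-\dhat{x,y}) + \delta$, and hence $\dhat{z,a} \le \dhat{z,b'} + \delta \le \tfrac{1}{2}(\dhat{x,z}+\dhat{y,z}-\dhat{x,y}) + 2\delta$. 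The case $b' \in [y,z]$ is symmetric, using $\dhat{y,a} = \tfrac{1}{2}(\dhat{x,y}-\dhat{x,z}+\dhat{y,z})$. Since $p$ is a nearest point projection, $\dhat{z,p} \le \dhat{z,a}$, and the triangle inequality $\dhat{x,z} \le \dhat{x,p} + \dhat{p,z}$ then yields the claimed bound.

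The final step is to feed in the hypothesis $z \in H(x,y)$, i.e. $\dhat{z,y} \le \dhat{z,x}$, which immediately gives $\tfrac{1}{2}(\dhat{x,y}+\dhat{x,z}-\dhat{y,z}) \ge \tfrac{1}{2}\dhat{x,y}$, and therefore $\dhat{x,p} \ge \tfrac{1}{2}\dhat{x,y} - 2\delta$. No real obstacle arises in this plan: the argument is entirely routine thin-triangles reasoning combined with the defining inequality of the halfspace, and the only care needed is in tracking constants. My sketch produces $2\delta$, so the $3\delta$ in the statement has a little room to spare, perhaps to absorb a coarser constant in the internal-point inequality.
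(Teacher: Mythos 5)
Your argument is correct. Note that the paper gives no proof of this proposition at all --- it is imported verbatim from \cite{maher2}*{Proposition 3.7} --- so there is nothing internal to compare against; your derivation is the standard one. The key estimate $\dhat{x,p} \ge \tfrac{1}{2}(\dhat{x,y}+\dhat{x,z}-\dhat{y,z}) - 2\delta$ via the internal point and the slim-triangle condition is carried out correctly in both cases, the reduction to ``$p$ lies beyond, or within $2\delta$ of, the midpoint'' is valid since the half of $[x,y]$ nearer $y$ lies in $H(x,y)$, and the leftover $\delta$ comfortably absorbs the rounding issues that arise because the relative space is discrete (the midpoint and the internal point need only exist up to an additive $\tfrac{1}{2}$).
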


The contrapositive of this says that if the nearest point projection
of a point onto $[x,y]$ lies outside a $3\delta$-neighbourhood of
$H(x,y)$, then the point lies in the halfspace $H(y,x)$. We now show
an analogous result for the closure of a halfspace.

\begin{proposition} 
{\bf (nearest point projections of halfspaces)}
\label{prop:npphalf}
There is a constant $\Knpphalf$, which only depends on $\delta$, such
that the nearest point projection of $\overline{H(1,x)}$ to $[1,x]$ is
contained in a $\Knpphalf$-neighbourhood of $H(1,x)$.
\end{proposition}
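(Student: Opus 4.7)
The plan is to reduce the statement to \pref{half}, which handles interior points, by using \pref{npp boundary} to transfer the bound across the closure operation. The closure $\overline{H(1,x)}$ consists of $H(1,x)$ itself together with those points of $\d X$ that are limits of sequences from $H(1,x)$, so it suffices to treat the two cases separately.

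For a point $p \in H(1,x)$, \pref{half} already shows that any nearest point projection of $p$ to $[1,x]$ lies in the $3\delta$-neighbourhood of $H(1,x)$, so there is nothing new to prove here. For a boundary point $\l \in \overline{H(1,x)} \cap \d X$, I would first choose a sequence $\{a_n\}$ with $a_n \in H(1,x)$ converging to $\l$; such a sequence exists by the definition of the closure in $X \cup \d X$ with the Gromov topology. Applying \pref{half} to each $a_n$ gives a \npp $p_n$ of $a_n$ to $[1,x]$ lying within $3\delta$ of $H(1,x)$.

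Now let $p$ be any \npp of $\l$ to $[1,x]$, as defined in the discussion preceding \pref{npp boundary}. That proposition guarantees that for all $n$ sufficiently large, $\dhat{p, p_n} \le 15 \delta$. Combining with the previous paragraph gives $\dhat{p, H(1,x)} \le 15 \delta + 3 \delta = 18 \delta$, so setting $\Knpphalf = 18 \delta$ (a constant depending only on $\delta$) yields the proposition.

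I don't expect any serious obstacle: the argument is essentially a compatibility check between the interior statement \pref{half} and the coarse well-definedness of nearest point projection to geodesics for boundary points, \pref{npp boundary}. The only mild subtlety is ensuring the approximating sequence can be chosen inside $H(1,x)$ rather than merely in $\overline{H(1,x)}$, but this is immediate from the definition of the Gromov topology on $X \cup \d X$, since any neighbourhood of a boundary limit of $\overline{H(1,x)}$ must meet $H(1,x)$.
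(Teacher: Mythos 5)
Your argument is correct and is essentially identical to the paper's proof: both take a sequence $a_n \in H(1,x)$ converging to the boundary point, apply Proposition \ref{prop:half} to get the $p_n$ within $3\delta$ of $H(1,x)$, invoke Proposition \ref{prop:npp boundary} to place the boundary projection within $15\delta$ of the $p_n$, and conclude with $\Knpphalf = 18\delta$. No issues.
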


\begin{proof}
Let $a_n$ be a sequence of points in $H(1,x)$ which converge to a
point $\l$ in the boundary. Let $p$ be a nearest point projection of
$\l$ to $[x,y]$, and let $p_n$ be nearest point projections of $a_n$
to $[x,y]$. By \pref{npp boundary}, there is an $N$
such that the $p_n$ all lie within $15 \delta$ of $p$ for all $n \ge
N$. As the $p_n$ lie in a
$3 \delta$-neighbourhood of $H(1,x)$, by \pref{half},
this implies that the nearest point projection $p$ of any point in the
closure of $H(1,x)$ lies in a $18 \delta$-neighbourhood of $H(1,x)$.
Therefore we may choose $\Knpphalf$ to be $18 \delta$, which only
depends on $\delta$, as required.
\end{proof}

Let $H(1, x)$ be a halfspace, and let $y$ be a point on $[1, x]$ with
$\dhat{1, y} = \dhat{1, x} - K$. Then we say that $H(1, x)$ is
\emph{$K$-nested} with respect to $H(1, y)$. We now show that there is
a $K$, which only depends on $\delta$, such that if $H(1, x)$ is
$K$-nested inside $H(1, y)$, then $H(1, x) \subset H(1,y)$, and
furthermore, the closure of $H(1,x)$ is disjoint from the closure of
$H(y,1)$.

\begin{proposition}
{\bf (nested halfspaces)}
\label{prop:nested}
There is a constant $\Knested$, which only depend on $\delta$, such
that if $y$ lies on a geodesic $[1,x]$ with $\nhat{x} = \nhat{y} +
\Knested$, then $\Hbar{1, x}$ and $\Hbar{y, 1}$ are disjoint. In
particular $\Hbar{1, x} \subset \Hbar{1, y}$.
\end{proposition}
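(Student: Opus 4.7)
The plan is to argue by contradiction: I assume a point $z \in \Hbar{1,x} \cap \Hbar{y,1}$ and aim to show this forces $\Knested = \nhat{x} - \nhat{y}$ to be bounded by a universal constant depending only on $\delta$, contradicting our choice of $\Knested$. The central tool is \pref{npphalf}, applied in parallel to both halfspaces on the two nested geodesics $[1,y] \subset [1,x]$.

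First I would project $z$ to $[1,x]$, obtaining a point $p$, and also to the subgeodesic $[1,y]$, obtaining a point $q$. Applying \pref{npphalf} to $H(1,x)$ on the geodesic $[1,x]$ places $p$ within $\Knpphalf$ of the far half $\{a \in [1,x] : \nhat{a} \ge \nhat{x}/2\}$, yielding $\nhat{p} \ge \nhat{x}/2 - \Knpphalf$. Applied symmetrically to $H(y,1)$ on the geodesic $[y,1]$, it places $q$ within $\Knpphalf$ of the near half $\{a \in [1,y] : \nhat{a} \le \nhat{y}/2\}$, yielding $\nhat{q} \le \nhat{y}/2 + \Knpphalf$.

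The key step is to reconcile these two projections, since they live on different (though nested) geodesics. I would split on the position of $p$ along $[1,x]$. When $p \in [1,y]$, the point $p$ is also a nearest point for $z$ in the smaller set $[1,y]$, so by coarse uniqueness of nearest-point projections (in the spirit of \pref{npp boundary}) $\dhat{p,q}$ is bounded by a constant multiple of $\delta$; combining the two bounds gives $\nhat{x}/2 - \nhat{y}/2 \le 2\Knpphalf + O(\delta)$, which bounds $\Knested$. When $p \in (y,x]$, the coarse additivity $\dhat{z,a} = \dhat{z,p} + \dhat{p,a} + O(\delta)$ for $a \in \{1,y,x\}$ (valid since $1, y, x$ all lie on $[1,x]$) turns the defining inequalities $\dhat{z,x} \le \dhat{z,1} \le \dhat{z,y}$ into coarse constraints on $\nhat{p}$ and on $\nhat{y}$, which together again force $\Knested$ to be of order $\delta$.

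The hardest part will be the second case, where $q$ degenerates towards the endpoint $y$ of $[1,y]$ and the two bounds from \pref{npphalf} have to be combined with the coarse triangle-equalities rather than directly; careful book-keeping of the $O(\delta)$ constants is needed to absorb everything into a single universal bound. Setting $\Knested$ strictly larger than this universal bound then produces the desired contradiction and establishes $\Hbar{1,x} \cap \Hbar{y,1} = \emptyset$.

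The "in particular" conclusion $\Hbar{1,x} \subset \Hbar{1,y}$ is then immediate: the two closed halfspaces $\Hbar{1,y}$ and $\Hbar{y,1}$ cover $X \cup \d X$, since every interior point satisfies one of $\dhat{z,1} \le \dhat{z,y}$ or $\dhat{z,y} \le \dhat{z,1}$ and every boundary point is a limit of interior points, so any $z \in \Hbar{1,x}$ that is excluded from $\Hbar{y,1}$ by the just-proved disjointness must lie in $\Hbar{1,y}$.
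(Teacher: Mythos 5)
Your overall strategy is the same as the paper's: use \pref{npphalf} to pin the projection of $\Hbar{1,x}$ to the far half of $[1,x]$ and the projection of $\Hbar{y,1}$ to the near half of $[1,y]$, and observe that for $\Knested$ large these two regions cannot both (coarsely) contain the projection of a common point $z$. The paper's one-line proof is essentially your Case 1, and your Case 1 computation $\nhat{x}/2 - \nhat{y}/2 \le 2\Knpphalf + O(\delta)$ is the correct quantitative content.

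Case 2, however, does not deliver what you promise. If the \npp $p$ of $z$ to $[1,x]$ lies on $(y,x]$, then the \npp $q$ of $z$ to the subsegment $[1,y]$ is forced to within $O(\delta)$ of the endpoint $y$ (by \pref{double} applied to the path $[z,p]\cup[p,a]$ for $a\in[1,y]$), so combining with $\nhat{q} \le \nhat{y}/2 + \Knpphalf$ yields $\nhat{y} \le 2\Knpphalf + O(\delta)$ --- a bound on $\nhat{y}$, not on $\Knested$. Your coarse-additivity route gives the same thing: $\dhat{z,1} \le \dhat{z,y}$ with $p$ beyond $y$ reduces to $\dhat{1,y} \le O(\delta)$, while $\dhat{z,x}\le\dhat{z,1}$ only reproduces the \pref{npphalf} bound on $\nhat{p}$. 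No bookkeeping converts this into a bound on $\Knested$, and it cannot: if $y$ is the basepoint itself then $H(y,1)$ is the whole space and the closed halfspaces meet no matter how large $\Knested$ is. So the statement is really only true under an implicit lower bound on $\nhat{y}$ of order $\Knpphalf + \delta$ --- a hypothesis satisfied in every application in the paper, and one the paper's own proof also elides. To close your argument you should either add that hypothesis explicitly (making Case 2 terminate in a contradiction with it) or note that Case 2 is vacuous once $\nhat{y}$ is assumed large. Your treatment of the ``in particular'' clause is fine.
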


\begin{proof}
We shall choose $\Knested = 3 \Knpphalf + \delta$. By \pref{npphalf}
above, the \npps of $\Hbar{1, x}$ and $\Hbar{y, 1}$ are distance at
least $\Knpphalf + \delta$ apart, and so are disjoint. In particular,
$H(1, x) \subset H(1, y)$.
\end{proof}

We now find a lower bound for the Gromov product of any two points
contained in the closure of a halfspace $H(1,x)$.

\begin{proposition} 
{\bf (Gromov product bounded below in a halfspace)}
\label{prop:gp}
Let $H(1,x)$ be a halfspace with $\dhat{1, x} \ge 35 \delta$. 
Then there is a constant $\Kgp$, which only depends on $\delta$, such for
any pair of points $a$ and $b$ in $\overline{H(1,x)}$, the Gromov
product $\gp{a}{b} \ge \tfrac{1}{2} \dhat{1, x} - \Kgp$.
\end{proposition}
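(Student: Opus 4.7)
The approach is to bound $\gp{a}{b}$ below by first estimating the Gromov product of each halfspace point with the reference point $x$, and then invoking the standard $\delta$-hyperbolicity inequality for Gromov products.

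First, I would handle the case where $a, b \in H(1,x)$ lie in the interior. The defining property of the halfspace is precisely that $\dhat{a,x} \le \dhat{1,a}$, so
\[ \gp{a}{x} = \tfrac{1}{2}\bigl(\dhat{1,a} + \dhat{1,x} - \dhat{a,x}\bigr) \ge \tfrac{1}{2}\dhat{1,x}, \]
and the same inequality holds for $\gp{b}{x}$. Combined with the standard $\delta$-hyperbolicity estimate $\gp{a}{b} \ge \min(\gp{a}{x}, \gp{b}{x}) - \delta$, which holds whenever all three points lie in $\Grel$, this immediately gives $\gp{a}{b} \ge \tfrac{1}{2}\dhat{1,x} - \delta$ for interior points. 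Note that $x$ itself lies in $\Grel$, so there is no issue with the third point of the inequality.

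Second, I would extend the estimate to $\overline{H(1,x)}$, which may include points of $\fmin$. For $a,b \in \overline{H(1,x)}$ I would choose sequences $a_n, b_n \in H(1,x)$ with $a_n \to a$ and $b_n \to b$; such sequences exist because $\overline{H(1,x)}$ is the topological closure of $H(1,x)$ in $\Gbar = \Grel \cup \fmin$. Applying the interior bound to each pair gives $\gp{a_n}{b_n} \ge \tfrac{1}{2}\dhat{1,x} - \delta$ for every $n$, so $\liminf_{m,n} \gp{a_m}{b_n} \ge \tfrac{1}{2}\dhat{1,x} - \delta$, and taking the supremum over approximating sequences in the definition of the extended Gromov product yields $\gp{a}{b} \ge \tfrac{1}{2}\dhat{1,x} - \delta$ as well. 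One may absorb any additional $\delta$ slack from the boundary extension into the final constant.

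The main potential subtlety is the passage to the boundary: one has to be sure that every $a \in \overline{H(1,x)} \cap \fmin$ really is a limit of points in $H(1,x)$ (not merely of points in $\overline{H(1,x)}$), which follows from the definition of closure together with the fact that $H(1,x)$ is open enough in $\Grel$ to approximate its boundary limit points. After this is checked, the computation is entirely routine, and one may take $\Kgp$ to be a small multiple of $\delta$ (essentially $\delta$ itself, plus whatever slack is introduced by the extension of $\gp{\cdot}{\cdot}$ to $\fmin$). The hypothesis $\dhat{1,x} \ge 35\delta$ plays no essential role in the argument beyond ensuring the resulting lower bound is non-trivial.
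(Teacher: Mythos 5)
Your proposal is correct, but it takes a genuinely different and more elementary route than the paper. The paper's proof never uses the defining inequality of the halfspace directly on the Gromov product; instead it projects $a_1$ and $a_2$ to the geodesic $[1,x]$, uses the fact that nearest point projections of $\overline{H(1,x)}$ land in a bounded neighbourhood of $H(1,x)$ (Propositions \ref{prop:half} and \ref{prop:npphalf}), and then chains through Propositions \ref{prop:double} and \ref{prop:close} to show a geodesic $[a_1,a_2]$ stays inside a slightly shrunken halfspace $H(1,y)$, whence $\dhat{1,[a_1,a_2]} \ge \half\dhat{1,x} - 18\delta$ and $\Kgp = 19\delta$. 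This is where the hypothesis $\dhat{1,x}\ge 35\delta$ enters: it guarantees the point $y$ exists on $[1,x]$. Your argument short-circuits all of this: the halfspace condition $\dhat{a,x}\le\dhat{1,a}$ gives $\gp{a}{x}\ge\half\dhat{1,x}$ immediately, and the four-point condition finishes. This is shorter, yields a smaller constant, and (as you note) makes the hypothesis $\dhat{1,x}\ge 35\delta$ superfluous. Two minor caveats, neither fatal: the four-point inequality $\gp{a}{b}\ge\min(\gp{a}{x},\gp{x}{b})-\delta$ is not literally what the slim-triangles definition used in this paper provides --- it holds with a constant that is a bounded multiple of $\delta$ (e.g., via the paper's remark that the Gromov product is within $\delta$ of the distance to a geodesic, plus slimness of the triangle $a,b,x$, one gets constant $2\delta$), but since $\Kgp$ only needs to depend on $\delta$ this is harmless and you flag it. The boundary extension via sequences is exactly what the paper does as well, and the approximation of closure points by sequences in $H(1,x)$ is unproblematic here since $\Grel$ is countable and the paper uses the same convention throughout.
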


\begin{proof}
Let $a_1$ and $a_2$ be points in $H(1,x)$, and let $p_i$ be a \npp of
$a_i$ to $[1,x]$. The \npp of the path $[a_1, p_1] \cup [p_1, p_2]
\cup [p_2, a_2]$ to $[1,x]$ is contained in a $3 \delta$-neighbourhood
of $H(1, x)$. By thin triangles, a geodesic $[a_1, a_2]$ is contained
in a $2 \delta$-neighbourhood of the \npp path, and so as \npp is
coarsely distance reducing, by \pref{double}, the \npp of $[a_1, a_2]$
to $[1, x]$ is contained in a $29 \delta$-neighbourhood of $H(1, x)$.
Therefore, by \pref{half}, $[a_1, a_2]$ is contained in $H(1, y)$, for
$y$ in $[1,x]$ with $\dhat{1, y} = \dhat{1, x} - 35 \delta$, and such
a point $y$ exists as $\dhat{1,x} \ge 35 \delta$. This
implies that $\dhat{1, [a_1, a_2]} \ge \half \dhat{1, y} = \half
\dhat{1, x} - 18 \delta$, and so $\gp{a_1}{a_2} \ge \half \dhat{1, x}
- 19 \delta$.

If $b$ is a point in $\overline{H(1,x)} \cap \d X$, then there is a
sequence $b_n$ of points in $H(1,x)$ converging to $b$, and so by the
definition of Gromov product for boundary points, this implies that
$\gp{a}{b} \ge \tfrac{1}{2} \dhat{1,x} - 19 \delta$ for all points $a$
and $b$ in $\overline{H(1,x)}$. Therefore we may choose $\Kgp = 19
\delta$, which only depends on $\delta$.
\end{proof}

Let $\a$ be a \qg, and let $p$ be the nearest point on $\a$ to $1$.
We now show that for any pair of points $x_1$ and $x_2$ on $\a$ which
lie on either side of $p$, and are sufficiently far apart, then we may
choose nested halfspaces $H(1, x_1) \subset H(1, y_1)$ and $H(1, x_2)
\subset H(1, y_2)$, such that $\Hbar{1, y_1}$ and $\Hbar{1, y_2}$ are
disjoint, and any \qg $\b$ with one endpoint in each of the inner
halfspaces, $\Hbar{1, x_1}$ and $\Hbar{1, x_2}$, is contained in the
union of the outer halfspaces, and a bounded neighbourhood of $\a$.
Furthermore, the \npp of the complement of the outer two halfspaces to
$\b$ has diameter roughly $\half \dhat{x_1, x_2}$.  This is
illustrated below in Figure \ref{picture5}.

\begin{figure}[H] 
\begin{center}
\epsfig{file=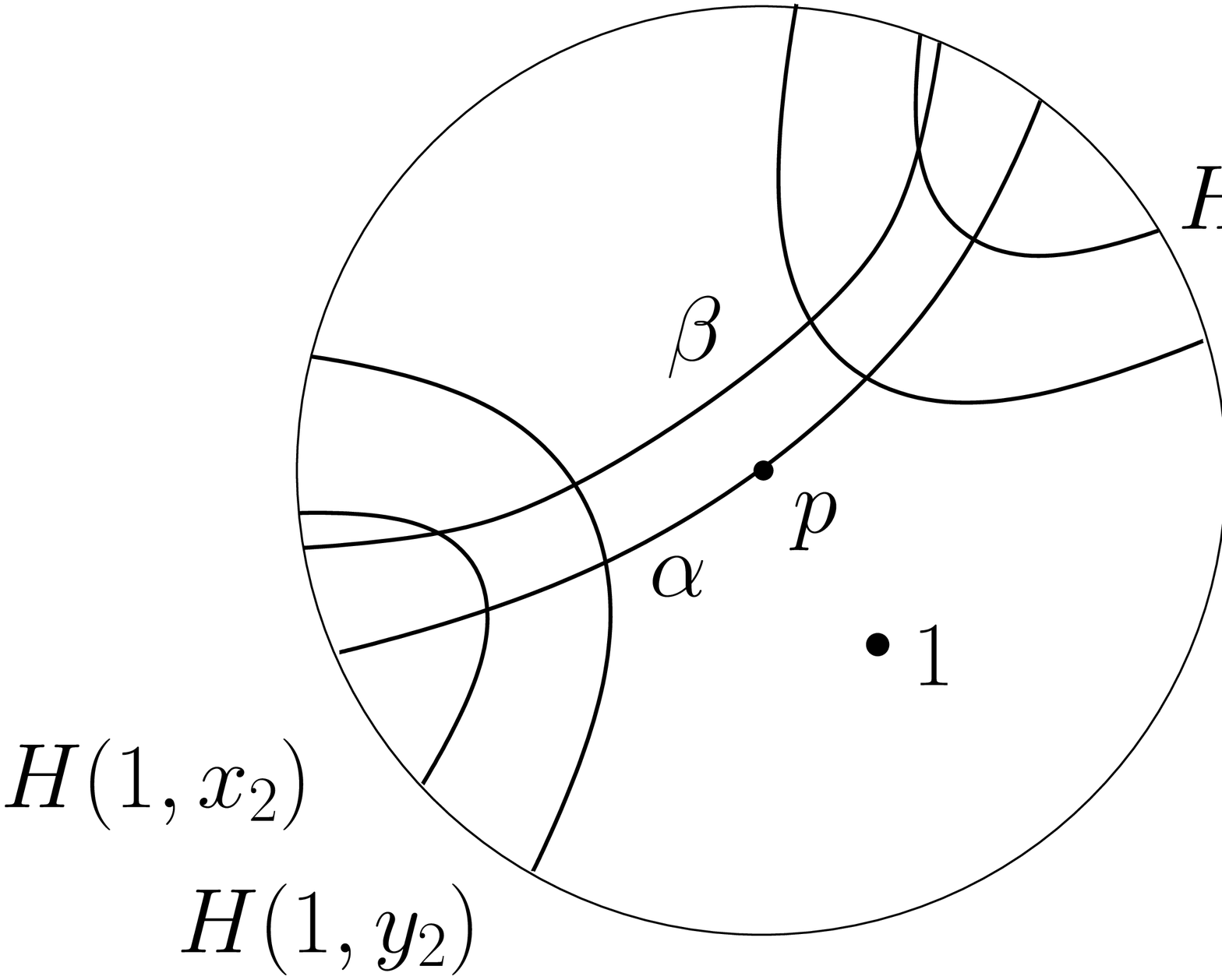, height=100pt}
\end{center}
\caption{Nested halfspaces.}\label{picture5}
\end{figure}

\begin{proposition}  \label{prop:disjoint}
Let $\a$ be a $K$-\qg, and let $p$ be a nearest point on $\a$ to $1$.
Then there are constants $A$ and $B$, which only depend on $K$ and
$\delta$, such that for any pair of points $x_1$ and $x_2$ on $\a$,
which are separated by $p$, with $\dhat{p, x_i} \ge \dhat{1,p} + A$,
then if $H(1, x_1) \subset H(1, y_1)$ and $H(1, x_2) \subset H(1,
y_2)$, are $B$-nested halfspaces, then $\Hbar{1,y_1}$ and $\Hbar{1,
  y_2}$ are disjoint, and any $K$-\qg $\b$, with one endpoint in
$\Hbar{1,x_1}$, and the other in $\Hbar{1, x_2}$, is contained in the
union of $H(1, y_1)$, $H(1, y_2)$, and a $B$-neighbourhood of $[x_1,
x_2]$.

Furthermore, there is a constant $C$, which only depends on $K$ and
$\delta$, such that the \npp of the complement of $H(1,y_1) \cup
H(1,y_2)$ to $\b$ has diameter at most $\half \dhat{x_1, x_2}
+ C$.
\end{proposition}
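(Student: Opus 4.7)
The plan is to choose $B \ge \Knested + L + c$ (where $L$ is the constant from \pref{npp qgs} and $c = c(K, \delta)$ is additional slack) and then $A$ still larger, so that every geometric constraint below closes up. Since $p$ is the nearest point on the $K$-quasigeodesic $\a$ to $1$, thin triangles on $\Delta(1, p, x_i)$ together with \pref{qg} imply that $[1, x_i]$ fellow-travels the sub-quasigeodesic of $\a$ from $p$ to $x_i$ beyond a bounded neighbourhood of $p$. In particular $y_i$ lies within a uniform constant of $\a$, and any geodesic $[y_1, y_2]$ passes within a uniform constant of $p$, yielding $\gp{y_1}{y_2} \le \dhat{1, p} + c_1$ for some $c_1 = c_1(K, \delta)$.

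For the disjointness of $\Hbar{1, y_1}$ and $\Hbar{1, y_2}$, suppose some $z$ lies in both. \pref{gp} gives $\gp{z}{y_i} \ge \half \dhat{1, y_i} - \Kgp$ for each $i$, and the $\delta$-inequality for the Gromov product forces $\gp{y_1}{y_2} \ge \half \min_i \dhat{1, y_i} - \Kgp - \delta$. Since $\dhat{1, y_i} \ge 2\dhat{1,p} + A - B - c_2$ by \pref{double} applied to a geodesic near $\a$, a choice of $A$ sufficiently large relative to $B$, $K$, $\delta$ contradicts the bound from the previous paragraph.

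For the containment of $\b$, I would apply \pref{npp qgs} to $\b$ with the geodesic $[x_1, x_2]$, producing \npps $p_a, p_b$ of the endpoints $a, b$ onto $[x_1, x_2]$. Thin triangles on $\Delta(1, x_1, x_2)$ combined with \pref{npphalf} show that $p_a$ lies in the portion of $[x_1, x_2]$ past the midpoint of $[1, x_1]$'s projection, and symmetrically for $p_b$, with $\dhat{p_a, p_b} \ge A - c_3$. For $A$ large enough \pref{npp qgs} applies and $\b$ lies in an $L$-neighbourhood of the \npp path $[a, p_a] \cup [p_a, p_b] \cup [p_b, b]$. Points of $\b$ close to the middle segment $[p_a, p_b] \subset [x_1, x_2]$ lie in the $B$-neighbourhood of $[x_1, x_2]$ once $B \ge L + c_4$. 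Points of $\b$ close to the outer segments lie in $\Hbar{1, y_i}$: both endpoints of $[a, p_a]$ are in $\Hbar{1, y_1}$ (from $\Hbar{1, x_1} \subset \Hbar{1, y_1}$ via \pref{nested} together with a direct argument for $p_a$), and the halfspace is quasi-convex enough that its $L$-neighbourhood of this segment stays inside $\Hbar{1, y_1}$ provided the nested gap $B - \Knested$ exceeds $L + c_5$.

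For the diameter bound on the \npp of the complement of $H(1, y_1) \cup H(1, y_2)$ onto $\b$, write $\mathcal{E}$ for this complement; points of $\mathcal{E}$ are strictly closer to $1$ than to either $y_i$, so by \pref{half} their \npps to $[1, y_i]$ lie on the $1$-side of the midpoint of $[1, y_i]$ up to $3\delta$. Using the fellow-traveling of $[1, y_i]$ with $[p, y_i] \subset \a$ from the first paragraph, this confines the \npps of $\mathcal{E}$ onto $[x_1, x_2]$ to a sub-segment of $\a$ around $p$ of total length at most $\half(\dhat{p, y_1} + \dhat{p, y_2}) + c_6 \le \half \dhat{x_1, x_2} + c_6$. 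Transferring to $\b$ via \pref{close}, which applies because the middle of $\b$ stays close to $[x_1, x_2]$, gives the desired bound with $C = C(K, \delta)$. The main obstacle is the third paragraph: one must absorb the $L$-errors from \pref{npp qgs} into the nested halfspace structure, forcing $B$ larger than $\Knested$ by an amount depending on $K$ and $\delta$, and correspondingly $A$ larger still so that the disjointness argument continues to hold.
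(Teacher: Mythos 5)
Your argument is correct and, for the two quantitative conclusions, follows essentially the paper's route: the containment of $\b$ is obtained exactly as in the paper by projecting the endpoints of $\b$ to $[x_1,x_2]$, checking via \pref{npphalf}, \pref{double} and \pref{close} that these projections are at least $A$ minus a uniform constant apart so that \pref{npp qgs} applies, and then absorbing the two outer legs of the resulting \npp path, together with their $L$-fattening, into $H(1,y_i)$ by taking $B$ large enough; the diameter bound is likewise obtained by showing that a point outside the halfspaces projects to $[1,p]\cup[p,x_i]$ no further than roughly $\half\dhat{1,x_i}$ from $1$, which pins its projection in a window of length $\half\dhat{x_1,x_2}+C$ about $p$ (the paper, like you, leaves the final transfer from $[x_1,x_2]$ to $\b$ essentially implicit). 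The one place you genuinely diverge is the disjointness of $\Hbar{1,y_1}$ and $\Hbar{1,y_2}$: the paper deduces this from the separation of the nearest point projections to $[x_1,x_2]$ together with \pref{nested}, whereas you argue by contradiction using \pref{gp}, playing the lower bound $\gp{y_1}{y_2}\ge \half\min_i\dhat{1,y_i}-\Kgp-\delta$ forced by a common point of the closures against the upper bound $\gp{y_1}{y_2}\le\dhat{1,p}+c_1$ coming from the fact that $[y_1,y_2]$ passes near $p$. Your version has the merit of addressing the outer halfspaces $H(1,y_i)$ directly (the paper's displayed separation estimate is phrased for the inner ones), at the cost of making $A$ depend on $B$; since $B$ is fixed first purely in terms of $K$ and $\delta$, this causes no circularity.
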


\begin{proof}
Let $x_1$ and $x_2$ be points on the \qg $\a$, where are separated by
$p$, which is a closest point on $\a$ to the basepoint $1$. We shall
assume that $A \ge 2 \Knpphalf + 66 \delta$, where $\Knpphalf$ is the
constant from \pref{npphalf}.
We start by finding a lower bound for the distance between the \npps
of the closures of the halfspaces $\Hbar{1, x_i}$ to a geodesic $[x_1,
x_2]$. Let $p_1$ be the \npp of a point $z_1$ in $\Hbar{1, x_1}$ to
$[1, x_1]$, let $q_1$ be the \npp of $z_1$ to the path $[1, p] \cup
[p, x_1]$. The \npp of the halfspace $H(1, x_1)$ to $[1, x_1]$ lies in
a bounded neighbourhood of $H(1, x_1)$, by \pref{npphalf}, which gives
the following estimate.
\begin{align}
\dhat{1, p_1} & \ge \half \dhat{1, x_1} - \Knpphalf \notag
\intertext{%
The \npp path $[1, p] \cup [p, x_1]$ is contained in a $6
\delta$-neighbourhood of $[1,x_1]$, by \pref{double}. As \npps to
close paths are close, $\dhat{p_1, q_1} \le 24 \delta$, by
\pref{close}.  This implies%
}
\dhat{1, q_1} & \ge \half \dhat{1, x_1} - \Knpphalf - 24 \delta. \notag
\intertext{%
Nearest point projection paths are almost geodesic, so $\dhat{1, x_1}
\ge \dhat{1, p} + \dhat{p, x_1} - 24 \delta$, by \pref{double}, and we
have assumed $\dhat{p, x_1} \ge \dhat{1,p} + A$. Therefore%
}
\dhat{1, q_1} & \ge \dhat{1, p} + \half A - \Knpphalf - 33 \delta. \notag
\intertext{%
Therefore, as we have assumed that $A \ge 2 \Knpphalf + 66 \delta$,
the point $q_1$ lies on
$[x_1, p]$, not on $[1, p]$, and $\dhat{p, q_1} \ge \half A -
\Knpphalf - 33 \delta$. A similar argument shows that as $A \ge 2
\Knpphalf + 66 \delta$, for any $z_2 \in H(1, x_2)$, the \npp
$q_2$ of $z_2$ to $[1, p] \cup [p, x_2]$ lies in $[p, x_2]$ and
$\dhat{p, q_2} \ge \half A - \Knpphalf - 33 \delta$.  As $q_1, q_2$
and $p$ lie on a common geodesic $[x_1, x_2]$, this implies that%
}
\dhat{q_1, q_2} & \ge A - 2 \Knpphalf - 66 \delta. \label{eq:qbound}
\end{align}
Therefore, if $A \ge 2 \Knpphalf + \Knested + 66 \delta$, then the
\npps of $\Hbar{1, x_1}$ and $\Hbar{1,x_2}$ to $[x_1, x_2]$ are
distance at least $\Knested$ apart, and so in particular they have
disjoint closures, by \pref{nested}.

Let $\b$ be a $K$-\qg, with endpoints $\k_i \in H(1, x_i)$, and let
$q_i$ be the \npp of $\k_i$ to $[x_1, x_2]$. Recall that by \pref{npp
  qgs}, there are constants $K'$ and $L$, which only depend on $K$ and
$\delta$. such that if $\dhat{q_1, q_2} \ge K'$, then for all $n$
sufficiently large, the subsegment of $\b$ between $\b_n$ and
$\b_{-n}$ is contained in an $L$-neighbourhood of the \npp path
$[\b_n, q_1] \cup [q_1, q_2] \cup [q_2, \b_{-n}]$. As $\k_i \in
\Hbar{1, x_i}$, the distance between $q_1$ and $q_2$ satisfies the
inequality in line \eqref{eq:qbound} above, so if $A \ge K' + 2
\Knpphalf + 66 \delta$ then $\dhat{q_1, q_2} \ge K'$. The \npp of
$[\b_n, q_i]$ to $[1, x_1]$ is contained in a $(2 \Knpphalf + 24
\delta )$-neighbourhood of $H(1, x_1)$, and is therefore contained in
$H(1, y_1)$, for $y_1$ on $[1, x_1]$ with $\dhat{1, y_1} = \dhat{1,
  x_1} - 3 \Knpphalf - 24 \delta$, and a similar argument applies to
$[q_2, \b_{-n}]$. Any part of $\b$ outside these halfspaces is
contained in an $L$-neighbourhood of $[q_1, q_2]$, which is a subset
of $[x_1, x_2]$.  Therefore we may choose $A = K' + 2 \Knpphalf +
\Knested + 66 \delta$, and $B = L + 3 \Knpphalf + 24 \delta$, which
only depend on $K$ and $\delta$, as required.

We now find an upper bound for the size of the \npp of the complement
of $H(1, x_1) \cup H(1, x_2)$ to $[x_1, x_2]$. Let $z_1 \in \Hbar{x_1,
  1}$, let $p_1$ be the \npp of $z_1$ to $[1,x_1]$, and let $q_1$ be
the \npp of $z_1$ to $[1, p] \cup [p, x_1]$. Then, by \pref{npphalf},
as the \npp of a halfspace $\Hbar{x_1, 1}$ lies in a bounded
neighbourhood of the halfspace,
\begin{align*} 
\dhat{1, p_1} & \le \half \dhat{1, x_1} + \Knpphalf.
\intertext{%
A \npp path is almost a geodesic, by \pref{double}, so $[1, x]$ lies
in a $6 \delta$-neighbourhood of $[1, p] \cup [p,x_1]$. Therefore by
\pref{close}, $\dhat{p_1, q_1} \le 24 \delta$. This implies%
}
\dhat{1, q_1} & \le \half \dhat{1, x_1} + \Knpphalf + 24 \delta. 
\intertext{%
By \pref{double}, $\dhat{1, p} + \dhat{p, q_1} - 24 \delta \le \dhat{1,
  q_1}$, which implies,%
}
\dhat{1, p} + \dhat{p, q_1} & \le \half \dhat{1, x_1} + \Knpphalf + 48 \delta.
\intertext{%
Combining this with a similar for formula for $q_2$ and the triangle
inequality, we obtain,%
}
\dhat{q_1, q_2} & \le \half \dhat{x_1, x_2} + \Knpphalf + 96 \delta.
\end{align*}
Therefore we may choose $C = \Knpphalf + 96 \delta$, which only
depends on $\delta$, as required.
\end{proof}

\subsection{Halfspace approximations} \label{section:halfspace}

Given a subset $T$ of $X \cup \d X$, we may consider all the
halfspaces which intersect the limit set of $T$ in the Gromov
boundary, and which are at least a certain distance from the
origin. To be precise, given a subset $T$ of $X \cup \d X$, define
$N_r(T)$ to be the closure of $\bigcup H(1,x)$, where the union runs
over all $x$ with $\nhat{x} \geqslant r$ and $\overline{H(1,x)} \cap
\overline{T} \cap \d X \not = \varnothing$. Roughly speaking we may
think of $N_r(T)$ as a regular neighbourhood of $\overline{T} \cap \d
X$ in $X \cup \d X$. The sets $N_r(T)$ are a decreasing sequence, as
$N_r(T) \subseteq N_s(T)$ for all $r > s$.  We now show that the
intersection of the sets $N_r(T)$ is actually equal to $T$.

\begin{lemma} \label{lemma:intersections}
Let $T$ be a closed subset of the Gromov boundary $\d X$. Then
$\bigcap N_r(T) = T$.
\end{lemma}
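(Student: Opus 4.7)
The plan is to verify the two inclusions $T \subseteq \bigcap_r N_r(T)$ and $\bigcap_r N_r(T) \subseteq T$ separately, with the substantive work going into the reverse inclusion.

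For the forward inclusion, given $\lambda \in T$, I would produce for each $r$ a halfspace contributing to $N_r(T)$ whose closure contains $\lambda$. A natural candidate is the sequence of vertices $\{a_n\}$ of a $\Ksplit$-\qg from $1$ to $\lambda$, which exists by Theorem \ref{theorem:uniform} and satisfies $\nhat{a_n} \to \infty$ and $\gp{a_n}{a_m} \to \infty$. Rearranging
\[
\dhat{a_n,a_m} = \nhat{a_n} + \nhat{a_m} - 2 \gp{a_n}{a_m},
\]
the condition $a_m \in H(1, a_n)$ becomes $\nhat{a_n} \le 2 \gp{a_n}{a_m}$, which holds for all $m$ sufficiently larger than $n$. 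Hence $\lambda \in \overline{H(1, a_n)}$, so $\overline{H(1, a_n)} \cap T \ne \varnothing$, and whenever $\nhat{a_n} \ge r$ the halfspace $H(1, a_n)$ is among those in the union defining $N_r(T)$. Taking $n$ large gives $\lambda \in N_r(T)$ for every $r$.

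For the reverse inclusion, fix $\lambda \in \bigcap_r N_r(T)$. I first rule out $\lambda \in X$: in the (effectively discrete) setting at hand the closure adds only boundary points, and $\lambda \in H(1, y)$ forces $\nhat{y} \le 2\nhat{\lambda}$, so $\lambda$ can appear in $N_r(T)$ only for $r \le 2 \nhat{\lambda}$. Thus $\lambda \in \partial X$. For each $r \ge 35\delta$, choose a sequence $\lambda_k \to \lambda$ with $\lambda_k \in H(1, y_k)$, $\nhat{y_k} \ge r$, and a point $\mu_k \in \overline{H(1, y_k)} \cap T$. Since both $\lambda_k$ and $\mu_k$ lie in $\overline{H(1, y_k)}$, \pref{gp} gives
\[
\gp{\lambda_k}{\mu_k} \ge \tfrac{1}{2} r - \Kgp.
\]
Combining this with the four-point inequality (extended to the boundary with a bounded additive loss)
\[
\gp{\lambda}{\mu_k} \ge \min\{\gp{\lambda}{\lambda_k},\, \gp{\lambda_k}{\mu_k}\} - 2\delta,
\]
and the fact that $\gp{\lambda}{\lambda_k} \to \infty$ as $\lambda_k \to \lambda$, I can for each $r$ select an index $k$ so that, setting $\mu^{(r)} := \mu_k \in T$,
\[
\gp{\lambda}{\mu^{(r)}} \ge \tfrac{1}{2} r - \Kgp - 2\delta.
\]
This tends to infinity with $r$, so $\mu^{(r)} \to \lambda$ in $\partial X$; since $T$ is closed, $\lambda \in T$.

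The main technical point to monitor is the joint application of \pref{gp} and the four-point inequality when $\mu_k$ is itself a boundary point: the extended Gromov product is defined via a $\sup\liminf$ over approximating sequences, but \pref{gp} already applies to all points in $\overline{H(1, y_k)}$ and the four-point inequality survives the extension up to a bounded loss, so both facts can be invoked uniformly without further effort. Once those are in place, the argument collapses to the estimate displayed above.
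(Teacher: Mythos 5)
Your proof is correct and follows essentially the same route as the paper's: the key step in both is the lower bound from \pref{gp} on the Gromov product of two points in a common closed halfspace $\overline{H(1,x)}$ with $\nhat{x}\ge r$, which forces the chosen points of $T$ to converge to $\l$, and then closedness of $T$ finishes. Your two refinements --- the explicit verification of $T\subseteq N_r(T)$ and the four-point-inequality step (needed because $N_r(T)$ is the \emph{closure} of a union, so $\l$ need not lie in any single $\overline{H(1,x)}$) --- are details the paper glosses over, but they do not change the underlying argument.
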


\begin{proof}
The set $T$ is contained in $N_r(T)$ for all $r$, so $T$ is a subset
of $\bigcap N_r(T)$. We now show the reverse inclusion.  Let $\l$ be a
point in $\bigcap N_r(T)$. Then there is a sequence $x_n$ such that
$\l \in \overline{H(1,x_n)}$, with $\nhat{x_n} \to \infty$, and
$\overline{H(1,x_n)} \cap T \not = \varnothing$. Choose a sequence
$\l_n \in \overline{H(1,x_n)} \cap T$, then as the Gromov product of
two points in $\overline{H(1,x)}$ is bounded below by roughly $\half
\dhat{1,x}$, by \pref{gp}, as we may assume that $\dhat{1, x_n} \ge 35 \delta$, as
$\dhat{1, x_n} \to \infty$. This implies that $\gp{\l}{\l_n} \ge
\tfrac{1}{2} \dhat{1,x_n} - \Kgp$, where $\Kgp$ only depends on
$\delta$, and so $\gp{\l}{\l_n} \to \infty$ and the sequence $\l_n$
converges to $\l$. As $T$ is closed, this implies that $\l \in T$, and
so $\bigcap N_r(T)$ is a subset of $T$, and hence equal to $T$, as
required.
\end{proof}

We now show that an $N_r$-neighbourhood of $N_r(T)$ is contained
in $N_{r-\Knbd}(T)$, for some constant $\Knbd$ which only depends on
$\delta$.

\begin{lemma} \label{lemma:neighbourhood}
There is a constant $\Knbd$, which only depends on $\delta$, such that
$N_r(N_r((T))) \subset N_{r-\Knbd}(T)$, as long as $r \ge \Knbd$.
\end{lemma}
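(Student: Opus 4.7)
The plan is to take $a \in N_r(N_r(T))$ and locate a single halfspace $\overline{H(1,x')}$, with $\nhat{x'} \ge r-\Knbd$ and $\overline{H(1,x')} \cap \overline{T} \cap \d X \ne \varnothing$, which contains $a$ in its closure. Since $N_{r-\Knbd}(T)$ is closed, it suffices to handle the case where $a \in \overline{H(1,y)}$ for some $y$ with $\nhat{y} \ge r$ and $\overline{H(1,y)} \cap N_r(T) \cap \d X \ne \varnothing$; the general case follows by taking closure. So fix such a $y$ and a point $\l \in \overline{H(1,y)} \cap N_r(T) \cap \d X$. Our aim is to show $\overline{H(1,y)} \subset \overline{H(1,x')}$ for a single $x'$ as above.

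The first step produces from $\l$ a point $t \in \overline{T} \cap \d X$ with large Gromov product with every $z \in \overline{H(1,y)}$. Unwinding the definition of $N_r(T)$ gives a sequence $z_n \in H(1,x_n)$ with $z_n \to \l$, each $x_n$ satisfying $\nhat{x_n} \ge r$ and $\overline{H(1,x_n)} \cap \overline{T} \cap \d X \ne \varnothing$; pick $t_n$ in the latter intersection. By \pref{gp}, $\gp{z_n}{t_n} \ge r/2 - \Kgp$, and since $\gp{\l}{z_n} \to \infty$, the standard four-point Gromov inequality forces $\gp{\l}{t_n} \ge r/2 - \Kgp - \delta$ for all large $n$. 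Fix such an $n$ and set $t = t_n$. For any $z \in \overline{H(1,y)}$, \pref{gp} gives $\gp{z}{\l} \ge r/2 - \Kgp$, so a second application of the four-point inequality yields $\gp{z}{t} \ge r/2 - \Kgp - 2\delta$.

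The second step constructs $x'$. Let $\gamma$ be a $\Ksplit$-\qg from $1$ to $t$, which exists by Theorem \ref{theorem:uniform}, and let $x'$ be a point on $\gamma$ with $\nhat{x'} = r - \Knbd$, where $\Knbd$ will be chosen. The tail of $\gamma$ past $x'$ eventually lies inside $H(1,x')$ and converges to $t$, so $t \in \overline{H(1,x')} \cap \overline{T} \cap \d X$, giving the required nonempty intersection. For the inclusion $\overline{H(1,y)} \subset \overline{H(1,x')}$ I use the elementary identity $z \in H(1,x) \iff \gp{z}{x} \ge \nhat{x}/2$, which follows directly from $\dhat{z,x} \le \dhat{z,1}$ and the definition of Gromov product. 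Since $x'$ lies on the \qg $\gamma$ ending at $t$, $\gp{x'}{t} \ge \nhat{x'} - C$ for a constant $C$ depending only on $\Ksplit$ and $\delta$; combining with $\gp{z}{t} \ge r/2 - O(\delta)$ via the four-point inequality yields $\gp{z}{x'} \ge r/2 - O(\delta)$, which is at least $\nhat{x'}/2 = (r-\Knbd)/2$ once $\Knbd$ is chosen to be a sufficiently large fixed multiple of $\delta$, $\Kgp$, $\Ksplit$, and $C$. Hence $\overline{H(1,y)} \subset \overline{H(1,x')} \subset N_{r-\Knbd}(T)$.

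The main obstacle is the bookkeeping of additive constants: each use of \pref{gp}, the four-point Gromov inequality, or the \qg from Theorem \ref{theorem:uniform} loses a bounded amount depending only on $\delta$ and the surface, and the constant $\Knbd$ must simultaneously absorb all of these losses. A secondary subtlety is the limit in step one, where $z_n$ lies in $\Grel$ while $\l$ is a boundary point; this is handled by applying the four-point inequality in the form valid on $\Grel \cup \d \Grel$ and using $\gp{\l}{z_n} \to \infty$.
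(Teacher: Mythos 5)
Your proof is correct in substance but takes a genuinely different route from the paper's. The paper works entirely with nearest point projections: it reduces to a pair of intersecting halfspaces $\overline{H(1,x)}$, $\overline{H(1,y)}$, finds a point $z$ in the intersection whose projections to $[1,x]$ and $[1,y]$ are both at distance roughly $\tfrac{r}{2}$ from $1$, and then transfers the projection of an arbitrary $a \in \overline{H(1,y)}$ through a chain of close paths ($[1,q]\cup[q,z]$, then $[1,z]$, then $[1,p]\cup[p,z]$, then $[1,x]$) before invoking the nested-halfspace proposition, keeping $x'$ on the original geodesic $[1,x]$. You instead work with Gromov products and the four-point inequality throughout, and you build the enclosing halfspace $H(1,x')$ afresh on a $\Ksplit$-\qg aimed at a point $t \in \overline{T} \cap \d X$, verifying the inclusion via the exact identity $z \in H(1,x') \iff \gp{z}{x'} \ge \tfrac{1}{2}\nhat{x'}$. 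Your route buys two things: the membership criterion is exact rather than coarse, so there is only one chain of $\delta$-losses to track; and your extraction of a sequence $z_n \in H(1,x_n)$ with varying $x_n$ is the honest reading of ``closure of a union'' of halfspaces, whereas the paper's reduction implicitly places a boundary point of $N_r(T)$ in the closure of a single defining halfspace. The paper's route stays inside its already-developed projection toolkit (\pref{double}, \pref{close}, \pref{npphalf}, \pref{nested}), which it reuses elsewhere.

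One small repair is needed in your final step: the conclusion $\gp{z}{x'} \ge \tfrac{r}{2} - O(\delta)$ from the four-point inequality requires the other term $\gp{x'}{t} \ge \nhat{x'} - C = r - \Knbd - C$ to also exceed $\tfrac{r}{2} - O(\delta)$, which fails when $r$ is between $\Knbd$ and roughly $2\Knbd$; likewise the tail of $\gamma$ past $x'$ only enters $H(1,x')$ once $\nhat{x'}$ exceeds a constant depending on $\Ksplit$ and $\delta$. Both are fixed by choosing $\nhat{x'} = \max(r - \Knbd, c)$ for a suitable constant $c$ depending only on $\delta$ and $\Ksplit$ (the definition of $N_{r-\Knbd}(T)$ only requires $\nhat{x'} \ge r - \Knbd$, so taking $x'$ farther out is harmless), after which the inequality $\min\bigl(\tfrac{r}{2} - c_1, \nhat{x'} - c_2\bigr) - c_3 \ge \tfrac{1}{2}\nhat{x'}$ holds for all $r \ge \Knbd$ once $\Knbd$ is large enough.
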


\begin{proof}
Let $\overline{H(1,x)}$ and $\overline{H(1,y)}$ be a pair of
intersecting halfspaces, with $\dhat{1,x} \ge r$ and $\dhat{1,y} \ge
r$. It suffices to show that there is a $\Knbd$ such that
$\overline{H(1,y)} \subset \overline{H(1,x')}$, for any
$\overline{H(1,y)}$ which intersects $\overline{H(1,x)}$, where $x'$
lies on $[1,x]$ with $\nhat{x'} = \nhat{x} - \Knbd$.

We first show that there is a point $z \in X$ with bounded \npp to
both $[1,x]$ and $[1,y]$. Let $p$ be the \npp of $z$ to $[1,x]$, and
let $q$ be the \npp of $z$ to $[1,y]$.

\begin{claim}
There is a point $z$, and a constant $K$, which only depends on
$\delta$, such that $\dhat{1, p} \ge \half r - K$ and $\dhat{1, q} \ge
\half r - K$.
\end{claim}

\begin{proof}
Choose $t \in \overline{H(1,x)} \cap \overline{H(1,y)}$. If $t$ lies
in $X$, then choose $z$ to be $t$, and $\dhat{1, p} \ge \half
\dhat{1,x} - 3 \delta$ by \pref{half}, and similarly $\dhat{1,
  q} \ge \half \dhat{1,y} - 3 \delta$. If $t$ lies in $\d X$,
then let $t_n$ be a sequence which converges to $z$. Then by
\pref{npphalf}, $\dhat{1, p} \ge \half \dhat{1,x} - \Knpphalf$,
and similarly $\dhat{1, q } \ge \half \dhat{1, x} - \Knpphalf$,
and by \pref{npp boundary}, we may pass to a subsequence such that
$\dhat{1, p} \ge \half \dhat{1,x} - \Knpphalf - 15 \delta$,
and similarly $\dhat{1, q} \ge \half \dhat{1, x} - \Knpphalf
- 15 \delta$. Choose $z$ to be $t_n$ for some $n$. We have assumed that
$\dhat{1,x} \ge r$ and $\dhat{1,y} \ge r$, so we may choose $K$ to be
$18 \delta + \Knpphalf$, which only depends on $\delta$, as
required.
\end{proof}

By \pref{double}, the nearest point projection paths
$[1,p] \cup [p,z]$ and $[1,q] \cup [q,z]$ are both $\Knpp$-\qgs, for
some $\Knpp$ which only depends on $\delta$. Therefore, there is an
$L$, which only depends on $\delta$, such that the two \qgs are
contained in a $L$-neighbourhood of the geodesic $[1,z]$. Let $a$ be a
point in $\overline{H(1,y)}$, and let $a_q$ be the nearest point
projection of $a$ onto $[1,q] \cup [q,z]$. If $a_q$ lies in $[1,q]$,
then $\dhat{1,a_q} \ge \half \dhat{1,y} - \Knpphalf$, by
\pref{npphalf}.  If $a_q$ lies in $[q,z]$, then as $[1,q] \cup [q,z]$
is a closest point projection path, then $\dhat{1,a_q} \ge \dhat{1,q}
- 24 \delta$, by \pref{double}.  Therefore, using the claim above,
\begin{align}
\dhat{1,a_q} & \ge \half r - K - \Knpphalf - 24 \delta. \notag  
\intertext{%
As closest point projection onto close paths is close, \pref{close},
if $a_z$ is the \npp of $a$ onto $[1,z]$, then $\dhat{a_q,a_z} \le 3L + 6
\delta$. Similarly, if $a_p$ is the \npp of $a$ onto $[1,p] \cup [p,z]$,
then $\dhat{a_p,a_z} \le 3L + 6 \delta$. This implies that%
}
\dhat{1,a_p} & \ge \half r - K - \Knpphalf - 6L - 36 \delta. \label{eq:ap bound}
\intertext{%
Let $a_x$ be the \npp of $a$ to $[1,x]$. If $a_x$ lies in $[p, x]$,
then%
}
\dhat{1,a_x} & \ge \half r - \Knpphalf,  \label{eq:ax bound 1}
\intertext{%
by \pref{npphalf}. Otherwise, $a_x$ lies in $[1,p]$. If $a_p$ lies on
$[1,p]$, then we may choose $a_x = a_p$, so we obtain the same bound
for $\dhat{1, a_x}$ as the bound for $\dhat{1, a_p}$ in line
\eqref{eq:ap bound}. Finally, suppose $a_p$ lies on $[p,z]$. Consider
the \npp path $[a, a_x] \cup [a_x, p] \cup [p, a_p]$. If $\dhat{p,
  a_x} \ge 15 \delta$, then the geodesic $[a, a_p]$ passes within $6
\delta$ of $a_x$, which by \pref{double}, implies that $a$ is closer
to $a_x$ than $a_p$, which contradicts the fact that $a_p$ is a \npp
of $a$ to $[1, p] \cup [p,z]$. If $\dhat{p, a_x} \le 15 \delta$, then%
}
\dhat{1,a_x} & \ge \half r - \Knpphalf - 15 \delta. \label{eq:ax bound 2}
\intertext{%
Therefore, comparing the bounds in lines \eqref{eq:ap bound},
\eqref{eq:ax bound 1} and \eqref{eq:ax bound 2}, we have shown that%
}
\dhat{1, a_x} & \ge \half r - K - \Knpphalf - 6L - 36 \delta. \notag
\end{align}
Now using \pref{nested}, there is a halfspace $H(1,x')$ such that
$\overline{H(1,y)} \subset \overline{H(1,x')}$, where $x'$ lies on
$[1,x]$ with $\dhat{1,x'} = \dhat{1,x} - \Knbd$, where $\Knbd =
\Knested + 2(K + \Knpphalf + 6L + 36 \delta)$, which only depends on
$\delta$, as required.
\end{proof}

\subsection{Hyperbolic isometries}

Let $g$ be an isometry of a $\delta$-hyperbolic space $X$. We shall
write $\t_g$ for the \emph{translation length} of $g$, which is
\[ \t_g = \lim_{n \to  \infty}\tfrac{1}{n}\dhat{x, g^n x}. \]
This definition is independent of the point $x$, and by the triangle
inequality, $\t_g \le \dhat{x, gx}$, for any point $x \in X$. We say
$g$ is a \emph{hyperbolic isometry} if $\t_g > 0$.  A
\emph{quasi-axis} for a hyperbolic element is a bi-infinite $K$-\qg
$\a$ which is coarsely invariant under $g$, i.e.  $\a$ and $g^k \a$
are Hausdorff distance $2L$ apart, for any $k$, where $L$ is a
geodesic neighbourhood constant for $\a$.  For example, the images of
any point $x \in X$ under powers of $g$, $\{ g^n x \}$, forms a
$K$-quasi-axis for $g$, where $K$ depends on $g$ and $x$, and the ends
of the quasi-axis are the unique pair of fixed points for $g$ in the
Gromov boundary. We will refer to $\l^+(g) = \lim_{n \to \infty} g^n
x$ as the \emph{stable fixed point} for $g$, and we will refer to
$\l^-(g) = \lim_{n \to \infty} g^{-n} x$ as the \emph{unstable fixed
  point} for $g$. We now show that if the translation length of the
isometry $g$ is sufficiently large, then the stable fixed point
$\l^+(g)$ lies in the halfspace $\Hbar{x, g x}$.

\begin{proposition} \label{prop:fixed point}
There is a constant $\Kpa$, which only depends on $\delta$, such that
if $g$ is a hyperbolic isometry with translation length at least
$\Kpa$, then the stable fixed point of $g$ lies in
$\overline{H(x,g x)}$, and the unstable fixed point of $g$ lies
$\overline{H(x,g^{-1} x)}$.
\end{proposition}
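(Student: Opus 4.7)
The plan is to show that for $n$ sufficiently large, the orbit point $g^n x$ itself lies in $H(x, gx)$. Since by definition $g^n x \to \l^+(g)$, this will immediately place $\l^+(g)$ in $\overline{H(x, gx)}$. Using that $g$ is an isometry, the condition $g^n x \in H(x, gx)$ is equivalent to $\dhat{x, g^{n-1} x} = \dhat{gx, g^n x} \le \dhat{x, g^n x}$, so the task reduces to bounding $\dhat{x, g^n x} - \dhat{x, g^{n-1} x}$ from below by a quantity that is positive once $\t_g$ is large.

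I would pass to a quasi-axis for $g$. By Theorem \ref{theorem:uniform} there is a $\Ksplit$-\qg $\a$ connecting $\l^-(g)$ to $\l^+(g)$, and $\a$ is coarsely $g$-invariant. Let $p$ be a nearest point on $\a$ to $x$; since $g$ is an isometry, $g^n p$ is coarsely a nearest point on $\a$ to $g^n x$. For $n$ large enough that $\dhat{p, g^n p} > 14 \delta$, \pref{double} implies that the \npp path $[x, p] \cup [p, g^n p] \cup [g^n p, g^n x]$ is a $\Knpp$-\qg whose length agrees with $\dhat{x, g^n x}$ up to $24 \delta$, giving
\[ \bigl| \dhat{x, g^n x} - \bigl( 2 \dhat{x, p} + \dhat{p, g^n p} \bigr) \bigr| \le 24 \delta. \]

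Next I would combine this with a uniform-in-$n$ two-sided estimate $n \t_g \le \dhat{p, g^n p} \le n \t_g + C_0$, where $C_0$ depends only on $\delta$. The lower bound follows from Fekete's lemma applied to the subadditive sequence $n \mapsto \dhat{p, g^n p}$. Substituting into the previous display for both $n$ and $n-1$ and subtracting yields
\[ \dhat{x, g^n x} - \dhat{x, g^{n-1} x} \ge \t_g - C_1, \]
for some $C_1$ depending only on $\delta$. Setting $\Kpa := C_1 + 1$ then gives $g^n x \in H(x, gx)$ for all sufficiently large $n$, as required. The corresponding claim for $\l^-(g)$ follows by applying the same argument to $g^{-1}$, whose stable fixed point is $\l^-(g)$ and whose translation length equals $\t_g$.

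The main technical obstacle is the uniform upper bound $\dhat{p, g^n p} \le n \t_g + C_0$. The naive triangle-inequality bound $\dhat{p, g^n p} \le n \dhat{p, gp}$ is insufficient, because $\dhat{p, gp}$ can exceed $\t_g$ on a mere quasi-axis, and this would contribute a linearly growing error. In $\Grel$ the cleanest resolution is to replace $\a$ with a tight geodesic axis of Masur and Minsky \cite{mm2}, along which $g$ translates by exactly $\t_g$, so that the orbit $\{g^n p\}$ is spaced at distance $\t_g$ up to $O(\delta)$, and the desired uniform bound drops out.
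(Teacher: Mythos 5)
Your opening reduction (show $g^n x \in H(x, gx)$ for large $n$, equivalently $\dhat{x, g^n x} - \dhat{x, g^{n-1}x} \ge 0$) is the same as the paper's, but the route you take to the displacement estimate has a genuine gap at exactly the step you flag as the main obstacle. The uniform upper bound $\dhat{p, g^n p} \le n\t_g + C_0$ cannot be obtained the way you propose. Masur and Minsky's tight geodesics give finitely many geodesics between the two fixed points of $g$; the element $g$ permutes this finite set, so only some \emph{power} of $g$ is guaranteed to preserve one of them (this is Bowditch's refinement, not a statement in \cite{mm2}), and there is no result asserting a $g$-invariant geodesic along which $g$ translates by exactly $\t_g$. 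Even if such an axis existed, importing the tight-geodesic machinery would make your constants depend on the surface, whereas the proposition asserts $\Kpa$ depends only on $\delta$ — the same objection applies to your use of a $\Ksplit$-\qg and to the $24\delta$ error you quote from \pref{double}, which in your setting must be replaced by an error involving the geodesic-neighbourhood constant $L$ of the quasi-axis, since $p$ and $g^n p$ are only coarse projections to a nearby geodesic and $g\a$ is only Hausdorff-close to $\a$.

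The bound you want is in fact true and can be rescued without any exact axis: the sequence $a_n = \dhat{p, g^n p}$ is not only subadditive but approximately superadditive, $a_{n+m} \ge a_n + a_m - c(K,\delta)$, because for $\t_g$ large the projections $q_k$ of the orbit points $g^k p$ to a geodesic near $\a$ occur in order (this is exactly the argument in the proof of \pref{translation}), and Fekete applied to $a_n - c$ then gives $a_n \le n\t_g + c$. Alternatively the in-order projections give directly $\dhat{p,g^np} \ge \dhat{p,g^{n-1}p} + \dhat{p,gp} - O(L) \ge \dhat{p,g^{n-1}p} + \t_g - O(L)$, which is all your scheme needs. Note, though, that the paper avoids quasi-axes altogether: it compares the geodesic $[g^{-n}x, g^n x]$ with its image under $g$, uses that projections to nearby geodesics are close (\pref{close}) to locate the projections $p$ of $x$ and $q$ of $gx$ with $\dhat{p,q} \ge \t_g - 18\delta$, and then applies \pref{double}; this yields $\Kpa = 60\delta$ depending only on $\delta$, with no dependence on a quasi-axis constant.
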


\begin{proof}
It suffices to show that if $\t_g$ is sufficiently large then $g^n x
\in H(x, g x)$, for all $n$ sufficiently large.  Consider the geodesic
$\g_n = [g^{-n} x, g^n x]$ and its image under $g$, the geodesic $g
\g_n = [g^{-n+1} x, g^{n+1} x]$. This pair of geodesics has endpoints
distance $\dhat{x, g x}$ apart, so by thin triangles, the subsegments
of the geodesics distance greater than $2 \dhat{x, g x} + \delta$ from
their endpoints are contained within $2 \delta$-neighbourhoods of each
other.

Let $p$ be a closest point to $x$ on $\g_n$, and therefore $gp$ is a
closest point to $g x$ on $g \g_n$. Let $q$ be a closest point to $g
x$ on $\g_n$. As subsegments of $\g_n$ and $g \g_n$ close to $g x$ are
contained within $2 \delta$-neighbourhoods of each other, the nearest
point projections of $g x$ to these two geodesics are also close
together, by \pref{close}, which implies $\dhat{g p, q} \le 18
\delta$. The translation distance $\t_g$ is a lower bound for the
distance $g$ moves any point, so this implies that
\begin{equation} \label{eq:tg} 
\dhat{p, q} \ge \t_g - 18 \delta. 
\end{equation}
The triangle inequality implies
\begin{align}
\dhat{g x, g^n x} & \le \dhat{g x, q} + \dhat{q, g^n x}, \label{eq:fixed point
  a}
\intertext{%
and \pref{double} implies that the \npp path $[x, p] \cup [p, g^n x]$ is
almost a geodesic, i.e.%
}
& \dhat{x, p} + \dhat{p, g^n x} -24 \delta \le \dhat{x, g^n x}. \notag
\end{align}
We have shown that $\dhat{gp, q} \le 18 \delta$, and as $g$ is an
isometry, $\dhat{x, p} = \dhat{g x, gp}$. Furthermore, as $p, q$ and
$g^n x$ lie on a common geodesic, $\dhat{p, g^n x} = \dhat{p, q} +
\dhat{q, g^n x}$. This implies
\begin{equation} \label{eq:fixed point b}
\dhat{g x, q} + \dhat{q, g^n x} + \dhat{p,q}  - 42 \delta \le \dhat{x,
  g^n x}. 
\end{equation}
Comparing lines \eqref{eq:fixed point a} and \eqref{eq:fixed point b}
above shows that if $\dhat{p, q} \ge 42 \delta$ then $g^n x \in H(x, g
x)$, for all $n$ sufficiently large, and so $\l^+(g) \in \Hbar{x, g
  x}$. Using line \eqref{eq:tg} above shows that if $\t_g \ge 60
\delta$, then $\dhat{p, q} \ge 42 \delta$, so we may choose $\Kpa =
60\delta$.  Replacing $g$ by $g^{-1}$ in the argument above shows that
if the relative translation length of $g$ is at least $\Kpa$, then
$\l^-(g) \in \overline{H(x,g^{-1} x)}$.
\end{proof}

We now show that we may estimate the translation length of of a
hyperbolic isometry $g$ in terms of the distance it moves a point $x
\in X$, and the distance from $x$ to a quasi-axis for $g$.

\begin{proposition} \label{prop:translation}
Let $g$ be a hyperbolic isometry with a $K$-quasi-axis $\a$.  Then
there is a constant $\Ktrans$, which only depends on $K$ and $\delta$,
such that if $\t_g \ge \Ktrans$, then 
\[\norm{ \t_g -  \dhat{x, g x} + 2\dhat{x,\a} } \le \Ktrans. \]
\end{proposition}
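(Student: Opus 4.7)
The plan is to interpret $\dhat{x, gx}$ as the length of a \npp path that drops from $x$ to a nearest point $p$ on $\a$, traverses $\a$ from $p$ to a nearest point $q$ of $gx$, and then rises from $q$ to $gx$. Once the three legs are estimated up to bounded error, \pref{double} will deliver the conclusion. First I would observe that $\a$, $g \a$, and $g^{-1} \a$ are $K$-\qgs sharing the fixed points $\l^{+}(g), \l^{-}(g) \in \d X$ as endpoints, and hence by \pref{qg} they lie pairwise in a uniform $2L$-Hausdorff neighbourhood of each other (and of a common geodesic joining the fixed points), where $L$ depends only on $K$ and $\delta$. This immediately gives $\dhat{gx, \a} = \dhat{x, g^{-1} \a}$ within $2L$ of $\dhat{x, \a}$, so the two vertical legs $\dhat{x, p}$ and $\dhat{gx, q}$ each equal $\dhat{x, \a}$ up to an additive error bounded in terms of $K$ and $\delta$.

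For the horizontal leg $\dhat{p, q}$, since $g$ is an isometry $gp$ is a \npp of $gx$ onto $g \a$, and comparing projections to $\a$ and to $g \a$ by passing through an intermediate geodesic close to both and applying \pref{close} twice shows that $\dhat{q, gp}$ is bounded in terms of $K$ and $\delta$. Thus $\dhat{p, q}$ agrees with $\dhat{p, gp}$ up to bounded error. The inequality $\t_g \le \dhat{p, gp}$ is immediate from the definition of translation length, while for the reverse I would project the orbit $\{ g^n p \}$ to a geodesic $\b$ lying within $L$ of $\a$: the projections $y_n$ are ordered along $\b$ with consecutive spacing within a constant of $\dhat{p, gp}$, and the total geodesic distance $\dhat{y_0, y_n}$ equals $n \t_g$ up to an additive constant; summing and dividing by $n$ forces $|\dhat{p, gp} - \t_g|$ to be bounded in terms of $K$ and $\delta$.

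Choosing $\Ktrans$ large enough that $\t_g \ge \Ktrans$ guarantees $\dhat{p, q} > 14 \delta$, \pref{double} then yields
\[ \dhat{x, p} + \dhat{p, q} + \dhat{q, gx} - 24 \delta \le \dhat{x, gx}, \]
with the matching upper bound coming from the triangle inequality. Substituting the three estimates above and absorbing every bounded error term into a single constant depending only on $K$ and $\delta$ produces the desired inequality $| \t_g - \dhat{x, gx} + 2 \dhat{x, \a} | \le \Ktrans$. The main technical obstacle I expect is the additive upper bound $\dhat{p, gp} \le \t_g + O(1)$; this is the only step where the honest $K$-\qg parameterization of $\a$ (as opposed to mere coarse $g$-invariance) is essential, because one must convert the asymptotic limit defining $\t_g$ into a per-step estimate while preventing the quasi-isometric distortion of $\a$ from blowing up multiplicatively in $n$.
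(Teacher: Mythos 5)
Your proposal is correct and follows essentially the same route as the paper's proof: decompose $\dhat{x,gx}$ along a \npp path through $\a$, compare the projection $q$ of $gx$ with $gp$ via \pref{close}, and establish $|\dhat{p,gp}-\t_g|\le O(L)$ by projecting the orbit $\{g^np\}$ to a nearby geodesic, checking the projections are ordered, and summing the consecutive gaps before dividing by $n$. The only point to tighten is that the ordering of the projections is not automatic but must be derived from $\t_g$ being large, exactly as the paper does.
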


\begin{proof}
We start by estimating the translation length $\t_g$ in terms of the
distance $g$ moves a point $p$ on a $K$-quasi-axis $\a$ for $g$. Let $L$ be
a geodesic neighbourhood constant for the \qg $\a$, and and recall
that $L$ depends only on $K$ and $\delta$. We are interested in
isometries with sufficiently large translation lengths, and we will
assume $\t_g \ge 10 L$. 

We now find a lower bound for $\t_g$ in terms of the distance $g$
moves a point on the quasi-axis $\a$. Let $p$ be a point on $\a$, and
consider the geodesic $[p,g^n p]$. Let $q_k$ be the nearest point
projection of $g^k p$ to $[p, g^n p]$, for $0 \le k \le n$. The point
$g^k p$ lies on $g^k \a$, which is contained in a $2 L$-neighbourhood
of $\a$, so the distance from $g^k p$ to $q_k$ is at most $3L$. The
distance between $g^k p$ and $g^{k+m} p$ is equal to $\dhat{p,g^m p}$,
which implies that
\begin{equation} \label{eq:bounds} 
\dhat{p, g^m p} - 6 L \le \dhat{ q_k, q_{k+m} } \le \dhat{p,g^m p}
+ 6 L, 
\end{equation}
for all $ 0 \le k \le k + m \le n$. The distance $g$ moves any point
is an upper bound for $\t_g$, and $\t_{g^{m}} = m \t_g$, which implies
\begin{equation} \label{eq:intervals}
m \t_g- 6 L \le \dhat{ q_k, q_{k+m} },  
\end{equation}
again for all $0 \le k \le k + m \le n$. The nearest \npps $q_k$ lie
on a common geodesic, and we now show that if $\t_g$ is sufficiently
large, then they appear on the geodesic in the order $q_0, q_1, \ldots
q_{n}$. If they appear on the geodesic in a different order, then
there must be a triple $q_k, q_{k+1}, q_{k+2}$ such that $q_{k+1}$
does not separate $q_k$ and $q_{k+2}$. Line \eqref{eq:bounds} then
implies that $\dhat{q_k, q_{k+2}} \le 12 L$, and , and then line
\eqref{eq:intervals} with $m=2$ implies that $\t_g \le 9L$, which
contradicts the fact that we assumed $\t_g \ge 10 L$. As the $q_i$ lie
in consecutive order on a geodesic, this implies that
\[ \dhat{p, g^n p} = \dhat{q_0, q_1} + \dhat{q_1, q_2} + \cdots +
\dhat{q_{n-1}, q_n}. \]
Using line \eqref{eq:bounds} with $m=1$, this implies $\dhat{p, g^n p}
\ge n(\dhat{p,gp} - 6 L)$, and this holds for all $n$, so the
translation length $\t_g$ is at least $\dhat{p,gp} - 6 L$.  Therefore
we have shown
\begin{equation} \label{eq:trans bound}
\dhat{p, gp} - 6 L \le \t_g \le \dhat{p, gp}, 
\end{equation}
for any point $p$ which lies on a $K$-quasi-axis for $g$.

\begin{figure}[H] 
\begin{center}
\epsfig{file=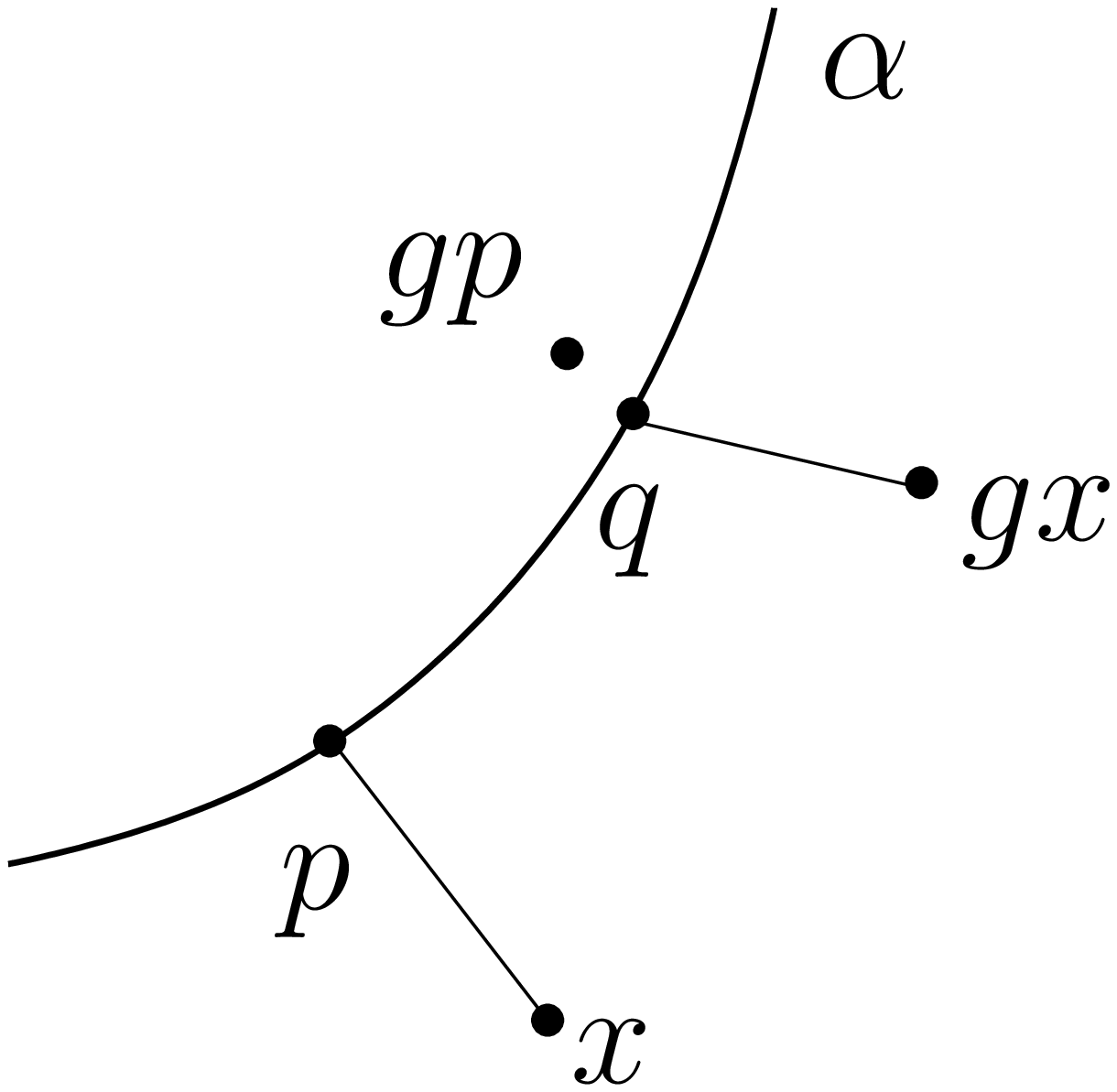, height=100pt}
\end{center}
\caption{The translation length $\t_g$ is roughly $\dhat{x, gx} -
  2\dhat{x, \a}$.} \label{picture29}
\end{figure}

We now show that if $p$ is a \npp of $x$ to $\a$, and $q$ is a \npp of
$g x $ to $\a$, then $q$ is a bounded distance from $gp$.  The
distance from $x$ to $p$ is the same as the distance from $g x$ to
$gp$, and the point $gp$ lies on $g \a$, which lies in a $2L$
neighbourhood of $\a$. The point $q$ is a nearest point on $\a$ to $g
x$, and the point $gp$ is a nearest point on $g \a$ to $g x$, so the
distance from $g p$ to $q$ is at most $6 L + 6\delta$, as nearest
point projections onto close paths are close, by \pref{close}. This is
illustrated in Figure \ref{picture29} above. Using line \eqref{eq:trans
  bound} above, this implies
\begin{equation} \label{eq:trans bounda}
\dhat{p, q} - 12 L - 6 \delta \le \t_g \le \dhat{p, q} + 6 L + 6 \delta. 
\end{equation}
Let $p'$ be the \npp of $x$ to $[p,q]$, and let $q'$ be the \npp
projection of $g x$ to $[p,q]$.  The segment of $\a$ between $p$ and
$q$ lies in an $L$-neighbourhood of $[p, q]$, so as \npps to close
paths are close, \pref{close}, $\dhat{p, p'} \le 6 L + 6 \delta$, and
$\dhat{q, q'} \le 6 L + 6 \delta$. Using line \eqref{eq:trans bounda}
above, this implies
\begin{equation} \label{eq:trans bound b}
\dhat{p', q'} - 18 L - 12 \delta \le \t_g \le \dhat{p', q'} + 12 L +
12 \delta. 
\end{equation}
If $\t_g \ge 18 L + 27 \delta$, then $\dhat{p', q'} \ge 15 \delta$, so
the \npp path $[x, p'] \cup [p', q'] \cup [q', g x ]$ is almost a
geodesic, by \pref{double}, i.e.
\[ \dhat{x, p'} + \dhat{p', q'} + \dhat{q', g x} - 24 \delta \le
\dhat{x, g x} \le \dhat{x, p'} + \dhat{p', q'} + \dhat{q', g x}. \]
Together with line \eqref{eq:trans bound b}, and the bounds on
$\dhat{p, p'}$ and $\dhat{q, q'}$, this implies that
\[ \dhat{x, g x} - 2 \dhat{x, \a} - 30 L - 24 \delta \le \t_g \le
\dhat{x, g x} - 2 \dhat{x, \a} + 24 L + 48 \delta. \]
Therefore we may choose $\Ktrans = 30 L + 48 \delta$, which only
depends on $K$ and $\delta$, as required.
\end{proof}

\section{The disc set has harmonic measure zero}
\label{section:disc set}

Fix an identification of the closed orientable surface $\S$ with the
boundary of a handlebody. The \emph{disc set} $\ds$ is the subset of the
complex of curves consisting of all simple closed curves which bound
discs in the handlebody. In this section we show that the disc set has
harmonic measure zero

\begin{theorem} \label{theorem:disc set}
Let $(G,\mu)$ be a random walk on the mapping class group of an
orientable surface of finite type, which is not a sphere with three or
fewer punctures, such that the semi-group generated by $\mu$ contains
a complete subgroup. Then the harmonic measure of the disc set is zero. 
\end{theorem}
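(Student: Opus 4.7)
Let $\Lambda=\overline{\ds}\cap\fmin$ denote the limit set of the disc set in the Gromov boundary; this is a closed subset of $\fmin$, and I want to prove $\nu(\Lambda)=0$. The plan rests on three ingredients: Kerckhoff's finite-splitting theorem for train tracks, the completeness hypothesis on $H^+$, and the $\mu$-stationarity of $\nu$.

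First, I would establish that $\Lambda$ is nowhere dense in $\fmin$. Given $F\in\fmin$ and any open neighbourhood $U$ of $F$, I would pick a generic birecurrent train track $\tau$ carrying $F$ in the interior of the polyhedron $N(\tau)$ with $\mathrm{int}\,N(\tau)\cap\fmin\subseteq U$. Kerckhoff's theorem produces a finite splitting sequence $\tau=\tau_0\succ\cdots\succ\tau_k$ with no disc carried by $\tau_k$. The set $V:=\mathrm{int}\,N(\tau_k)\cap\fmin$ is a non-empty open subset of $U$; if a foliation in $V$ were a limit of discs, those discs would eventually lie in the open set $N(\tau_k)\subseteq\PMF$, contradicting $\tau_k\cap\ds=\varnothing$. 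Hence $V\cap\Lambda=\varnothing$ and $\Lambda$ has empty interior.

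Next I would use completeness to produce pseudo-Anosov elements $g\in H^+$ whose stable fixed points $\l^+(g)$ lie in $\fmin\setminus\Lambda$; such fixed points are dense in $\PMF$ by the completeness hypothesis, so combined with the previous step such $g$ exist. Passing to a high power $g^N$, \pref{fixed point} gives $\l^+(g^N)\in\overline{H(x_0,g^N x_0)}$, and by choosing $N$ large enough this closed halfspace can be made to lie in any prescribed small neighbourhood of $\l^+(g)$ disjoint from $\Lambda$.

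Finally, I would invoke the stationarity identity
\[
\nu(\Lambda)\;=\;\sum_{h\in G}\mun{n_0}(h)\,\nu(h^{-1}\Lambda),
\]
valid for every $n_0$, together with $\mun{n_0}(g^N)>0$ for some $n_0$ (since $g^N\in H^+$). The main obstacle I anticipate is quantifying how $g^N$ moves harmonic mass off $\Lambda$: to obtain the strict inequality $\nu((g^N)^{-1}\Lambda)<\nu(\Lambda)$ that, via the averaging identity, would drive $\nu(\Lambda)$ down, one must argue from the halfspace geometry of Section \ref{section:halfspace} that under a pseudo-Anosov with very large translation length a positive-$\nu$ fraction of foliations is attracted into $\overline{H(x_0,g^N x_0)}\cap\fmin\subseteq\fmin\setminus\Lambda$. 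Establishing this north--south dynamics at the level of $\nu$-measure is the key technical point, after which the averaging identity forces $\nu(\Lambda)=0$.
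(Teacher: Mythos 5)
There is a genuine gap, and it sits exactly where you flag it: the final stationarity step. The identity $\nu(\Lambda)=\sum_{h}\mun{n_0}(h)\,\nu(h^{-1}\Lambda)$ expresses the \emph{fixed} number $\nu(\Lambda)$ as a weighted average of the numbers $\nu(h^{-1}\Lambda)$. Exhibiting one group element $g^N$ with $\nu((g^N)^{-1}\Lambda)<\nu(\Lambda)$ does not ``drive $\nu(\Lambda)$ down''; it merely forces some other translate to have measure larger than $\nu(\Lambda)$, which is perfectly consistent with $\nu(\Lambda)>0$. There is nothing to iterate, because the left-hand side never changes. Your first step is also weaker than you need: nowhere density of $\Lambda$ (which your Kerckhoff argument does give) is compatible with $\nu(\Lambda)>0$, since nowhere dense sets can carry positive measure, and north--south dynamics of a single \pA element pushing mass into a neighbourhood disjoint from $\Lambda$ yields no contradiction either.

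The paper closes this gap with two ingredients you are missing. First, a supremum trick: set $s=\sup_{g}\nu(gX)$ and pick $g$ with $\nu(gX)\ge s-\e$; then stationarity, rewritten as $\nu(gX)=\sum_h \rmun{n}(h)\nu(hgX)$, forces \emph{every} translate $hgX$ with $\rmun{n}(h)>0$ to satisfy $\nu(hgX)\ge s-\e/\rmun{n}(h)$, because no term can exceed $s$. This is the mechanism that converts stationarity into a usable inequality, and it is why the argument is run at the supremum rather than at $\Lambda$ itself. Second, a construction (Lemma \ref{lemma:measure zero} plus the proof of Theorem \ref{theorem:split zero}) of arbitrarily many \emph{pairwise disjoint} translates $w_1gX, w_2gX,\dots$ with word lengths $\norm{w_i}\le k_i$ bounded independently of $g$ --- this uniformity is what controls $1/\rmun{n_{w_i}}(w_i)$ uniformly in $g$ and lets one choose $\e$ in advance. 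Kerckhoff's theorem and completeness are used not to show $\Lambda$ is nowhere dense, but to build an ``inversion'' map carrying the exterior of a train track polyhedron into its interior, which generates these disjoint translates inductively. With $N>2/s$ disjoint translates each of measure close to $s$, the total measure exceeds one, a contradiction. Your proposal would need to be reorganized around these two points; as written, the concluding step fails.
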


In order to show this, we will use the following theorem of Kerckhoff
\cite{kerckhoff}.

\begin{theorem} \cite{kerckhoff}*{Proposition on p36} 
\label{theorem:kerckhoff}
There is a recurrent maximal train track $\s$, such that for any
handlebody, $\s$ can be split at most $-9\chi$ times to a recurrent
maximal train track
$\s'$ such that $N(\s')$ is disjoint from the disk set of the handlebody. Here $\chi$
is the Euler characteristic of the boundary of the handlebody.
\end{theorem}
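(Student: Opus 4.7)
The plan is to follow Kerckhoff's approach, combining a measure-theoretic statement about the disc set with the combinatorics of splittings. First I would fix, once and for all, a generic maximal birecurrent train track $\s$ on $\S$, chosen so that its carrying polytope $N(\s)$ is top-dimensional in $\PMF(\S)$. This $\s$ is universal, not depending on the handlebody.

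Given a handlebody with disc set $\ds$, the next step is to describe the intersection $N(\s) \cap \overline{\ds}$ in the branch-weight coordinates on $\s$. By Masur's theorem that $\overline{\ds} \cap \PMF$ has empty interior, this intersection is a proper closed subset of the interior of $N(\s)$. The goal is to find a finite sequence of splits taking $\s$ to a maximal recurrent train track $\s'$ with $N(\s')$ disjoint from $\overline{\ds}$.

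The body of the argument is combinatorial. Each split along a large branch bisects $N(\s)$ along the hyperplane $a = b$ of Figure \ref{picture11}. I would perform an inductive splitting procedure: at each stage, locate a branch so that one of the two resulting half-polytopes is disjoint from $\overline{\ds}$ and retain the other half. The resulting polytope is carried by a new maximal recurrent train track, and its vertex cycles then do not bound compressing discs in the handlebody. That such a branch always exists at each stage follows from Masur's theorem, since $\overline{\ds}$ cannot fill any open subregion produced by the splittings performed so far.

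The main obstacle, and the true content of the theorem, is to bound the total number of required splittings by $-9\chi$ uniformly in the handlebody. This demands combinatorial control on how the disc locus sits inside $N(\s)$: one must show that $N(\s) \cap \overline{\ds}$ admits a stratification by at most $-9\chi$ elementary pieces, each resolved by a single split. I expect this to rest on a Whitehead-move analysis of compressing discs carried by $\s$, combined with the fact that $-3\chi$ is the ambient dimension and each coordinate direction contributes a bounded number of splittings before being resolved. This combinatorial estimate is the delicate step; the surrounding framework is essentially formal.
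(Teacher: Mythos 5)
This statement is imported verbatim from Kerckhoff (the ``Proposition on p36'') and the paper gives no proof of it, so the only question is whether your sketch would actually establish it. It would not. The central inductive step is unjustified and, as stated, false: Masur's theorem tells you that $\overline{\ds}\cap\PMF$ is a closed set with empty interior, i.e.\ nowhere dense, but nowhere density does not imply that when you bisect $N(\s)$ along a hyperplane $a=b$ one of the two resulting half-polytopes is disjoint from $\overline{\ds}$. A closed nowhere dense set can perfectly well meet both halves of every split available at every stage, so ``such a branch always exists'' does not follow from ``$\overline{\ds}$ cannot fill any open subregion.'' (Note also that if a single split ever did produce a half-polytope disjoint from $\overline{\ds}$, you would be done in one step by retaining that half; your procedure as written retains the other half, so it is internally inconsistent about which piece you keep.) More fundamentally, the logical direction is backwards: in Kerckhoff's paper, and in Section \ref{section:disc set} of this one, the splitting proposition is the \emph{input} used to prove that the disc set is measure-theoretically small (Theorem \ref{theorem:split zero} derives harmonic measure zero from it), so one cannot expect to derive the splitting proposition from a measure or Baire-category statement about $\overline{\ds}$.

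The second gap is the one you flag yourself: the uniform bound of $-9\chi$ splits, independent of the handlebody, is the entire content of the proposition, and your sketch defers it to an unspecified ``Whitehead-move analysis.'' In Kerckhoff's argument this bound does not come from a stratification of $N(\s)\cap\overline{\ds}$; it comes from a direct topological analysis of how the boundary of an essential compressing disc can be carried by $\s$. An outermost arc of intersection of a compressing disc with the meridian system of the handlebody produces a wave, which forces specific weight inequalities among the branches of $\s$; a maximal generic train track on a closed surface has exactly $-9\chi$ branches, and each split eliminates one of the finitely many carrying configurations compatible with a wave. That is where $-9\chi$ comes from, and it is exactly the step your proposal leaves open. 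So the proposal reproduces the formal scaffolding but omits the theorem's actual content.
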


In fact we shall show that any set with the splitting property
described in the theorem above has harmonic measure zero.

\begin{theorem}\label{theorem:split zero}
Let $(G,\mu)$ be a random walk on the mapping class group of an
orientable surface of finite type, which is not a sphere with three or
fewer punctures, such that the semi-group generated by $\mu$ contains
a complete subgroup.

Let $X$ be a set such that there is a train track $\t$ and a constant
$N$, such that for any image $gX$ of $X$, $\t$ may be split at most
$N$ times to produce a train track $\t'$ such that $N(\t')$ is
disjoint from $gX$. Then the harmonic measure of $X$ is zero.
\end{theorem}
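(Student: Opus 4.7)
The plan is to argue by contradiction, supposing $\nu(X) > 0$. The proof combines a preliminary upper bound from $\mu$-stationarity with the splitting hypothesis, and then bootstraps to zero using the minimality of the $G$-action on $\fmin$ guaranteed by completeness of $H^+$.

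The first step sets up the combinatorial picture. The set $T_N$ of train tracks obtainable from $\t$ by at most $N$ splits is finite. Splits preserve maximality, so each $N(\t')$ with $\t' \in T_N$ is a top-dimensional subpolyhedron of $\PMF$, and together the $N(\t')$ tile $N(\t)$ with pairwise disjoint interiors. Completeness of $H^+$ implies that the \pA fixed points of $H^+$ are dense in $\PMF$; since such fixed points lie in $\mathrm{supp}(\nu)$, we conclude $\mathrm{supp}(\nu) = \fmin$. Consequently $c := \min_{\t' \in T_N} \nu(\mathrm{int}(N(\t'))) > 0$.

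The second step applies $\mu$-stationarity. The hypothesis supplies, for each $g \in G$, some $\t(g) \in T_N$ with $X \cap g^{-1} N(\t(g)) = \varnothing$; applied at $g^{-1}$ this gives $g^{-1}X \cap N(\t(g^{-1})) = \varnothing$, hence $\nu(g^{-1}X) \le 1 - \nu(\mathrm{int}(N(\t(g^{-1}))))$. Iterating $\nu = \mu * \nu$ then yields the preliminary bound
\[
\nu(X) \;=\; \sum_{g} \mun{n}(g)\,\nu(g^{-1}X) \;\le\; 1 - \sum_{g} \mun{n}(g)\,\nu(\mathrm{int}(N(\t(g^{-1})))) \;\le\; 1 - c.
\]

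The final step upgrades $\nu(X) \le 1 - c$ to $\nu(X) = 0$. I would consider the open set $U := \bigcup_{g \in G} g^{-1}\mathrm{int}(N(\t(g)))$, which by the hypothesis is disjoint from $X$; it then suffices to show $\nu(U \cap \fmin) = 1$. Minimality of the $G$-action on $\fmin$ (a consequence of density of \pA fixed points under completeness) ensures every $F \in \fmin$ lies in some $G$-translate of each $\mathrm{int}(N(\t'))$, $\t' \in T_N$. Combining this density with the preliminary stationarity bound, the finiteness of $T_N$, and a tracking argument along sample paths $(w_n)$ of the random walk, one arrives at a $\nu$-conull subset of $\fmin$ contained in $U$, forcing $\nu(X) = 0$. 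The main obstacle is this last step: since $g \mapsto \t(g)$ is not canonical, one needs to show that some valid choice function produces a conull cover. I expect this compatibility between the combinatorial choices $\t(g)$ and the dynamical covering by $G$-orbits is the technical heart of the argument, and is handled using ergodicity of $\nu$ under the tail shift together with the finiteness of $T_N$.
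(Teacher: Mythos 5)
Your Step 2 is sound as far as it goes (granting $c>0$), and it is a genuinely different opening from the paper's, but the proof has a real gap at exactly the point you flag: the passage from $\sup_{g}\nu(gX)\le 1-c$ to $\nu(X)=0$. The mechanism you propose for that passage does not work. The set $U=\bigcup_{g}g^{-1}\mathrm{int}(N(\t(g)))$ is not $G$-invariant, so minimality of the action on $\fmin$ says nothing about $\nu(U)$, and ``ergodicity of $\nu$ under the tail shift'' is not something you have established, nor is it clear how it would interact with the non-canonical choice function $g\mapsto\t(g)$. What actually closes this gap is a martingale argument: since $\nu$ is the pushforward of $\P$ under the boundary map $w\mapsto F(w)=\lim w_n$ (Theorem \ref{theorem:converge}), the sequence $\nu(w_n^{-1}X)=\P(F(w)\in X\mid w_1,\dots,w_n)$ is a bounded martingale converging almost surely to the indicator $1_X(F(w))\in\{0,1\}$; your uniform bound $\nu(g^{-1}X)\le 1-c<1$ for all $g$ forces this limit to be $0$ almost surely, whence $\nu(X)=\P(F(w)\in X)=0$. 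With that step supplied your argument would be complete and rather clean, but as written the decisive step of the theorem is missing.

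Two secondary issues. First, your claim $c=\min_{\t'\in T_N}\nu(\mathrm{int}(N(\t')))>0$ is not justified: splitting preserves maximality of the complementary regions but can destroy recurrence, so some $\t'\in T_N$ may have $N(\t')$ of positive codimension and empty interior, making your minimum zero; moreover the theorem's hypothesis only provides \emph{some} train track $\t'$ disjoint from $gX$, with no guarantee it is recurrent, and you also need to justify $\mathrm{supp}(\nu)=\fmin$ (via identifying the support with the limit set of the semi-group $H^+$, not merely density of \pA endpoints in $\PMF$). Second, for comparison: the paper takes an entirely different route, proving first that any set $X$ admitting, for every translate $gX$, arbitrarily many pairwise disjoint translates $w_igX$ with the word lengths $\norm{w_i}$ bounded independently of $g$, must have harmonic measure zero (a maximal-translate argument using stationarity and, crucially, the finite support of $\mu$), and then using carrying maps and an ``inversion'' element supplied by completeness to manufacture such disjoint translates from the splitting hypothesis. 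The paper's route never needs the polyhedra $N(\t')$ to carry positive $\nu$-measure, which is precisely where your approach is fragile.
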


Recall that a subgroup of the mapping class group is \emph{complete}
if the endpoints of its \pA elements are dense in $\PMF$. We now show
that for a complete subgroup, pairs of endpoints $(\l^+(g), \l^-(g))$
are dense in $\PMF \cross \PMF$.

\begin{lemma}
Let $H$ be a complete subgroup of the mapping class group. Then pairs
of endpoints $(\l^+(g),\l^-(g))$ of \pA elements $g$ in $H$ are dense
in $\PMF \cross \PMF$.
\end{lemma}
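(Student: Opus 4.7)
The plan is to upgrade the density of individual \pA endpoints in $\PMF$ to density of pairs $(\l^+,\l^-)$ in $\PMF \cross \PMF$ by a ping-pong construction. Given $(F_1, F_2) \in \PMF \cross \PMF$ and open neighbourhoods $U_1 \ni F_1$, $U_2 \ni F_2$, the goal is to produce a \pA element $h \in H$ with $\l^+(h) \in U_1$ and $\l^-(h) \in U_2$.

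First, completeness of $H$ lets me choose \pA elements $g_1, g_2 \in H$ with $\l^+(g_1) \in U_1$ and $\l^+(g_2) \in U_2$. Since the collection of \pA endpoints is dense in $\PMF$ and only finitely many exclusions are required at each step, I can further arrange that the four foliations $\l^+(g_1), \l^-(g_1), \l^+(g_2), \l^-(g_2)$ are pairwise distinct. Then I consider the family $h_n = g_1^n g_2^{-n} \in H$.

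Next I carry out the ping-pong: choose pairwise disjoint closed neighbourhoods $V_1 \subset U_1$ of $\l^+(g_1)$, $W_1$ of $\l^-(g_1)$, $V_2 \subset U_2$ of $\l^+(g_2)$, and $W_2$ of $\l^-(g_2)$. By the north-south dynamics of a \pA acting on $\PMF$, for all sufficiently large $n$ the map $g_1^n$ carries $\PMF \setminus W_1$ into the interior of $V_1$ and $g_2^{-n}$ carries $\PMF \setminus V_2$ into the interior of $W_2$. Since $W_2 \cap W_1 = \varnothing$, composing yields $h_n(\PMF \setminus V_2) \subset V_1$ and, symmetrically, $h_n^{-1}(\PMF \setminus V_1) \subset V_2$ for all $n$ sufficiently large.

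Two consequences of this strict ping-pong picture finish the argument. First, by the Klein ping-pong lemma applied in $\PMF$, $\langle g_1^n, g_2^n \rangle$ is free for large $n$, and by the standard result of Ivanov and McCarthy every nontrivial element of such a subgroup is itself \pA, so $h_n$ is \pA. Second, the attracting behaviour on $V_1$ and repelling behaviour on $V_2$ forces the stable fixed point $\l^+(h_n) \in V_1 \subset U_1$ and the unstable fixed point $\l^-(h_n) \in V_2 \subset U_2$, which is exactly what is required. The only delicate step, rather than a genuine obstacle, is arranging that the four foliations $\l^\pm(g_i)$ are distinct before the ping-pong begins; the remainder is a standard dynamical argument on $\PMF$.
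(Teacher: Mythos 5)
Your argument is correct and is essentially the paper's own proof: both use completeness to place \pA endpoints of two elements near the target pair and then run a ping-pong/Schottky argument to produce a \pA word whose stable and unstable foliations land in the prescribed neighbourhoods. The one point you wave at --- arranging the four endpoints $\l^{\pm}(g_1), \l^{\pm}(g_2)$ to be pairwise distinct --- is most cleanly justified by the McCarthy--Ivanov fact that two \pA elements either share both fixed points or neither, but the paper elides essentially the same point when it assumes the pairs $\{a_i, b_i\}$ generate Schottky groups after passing to a subsequence and taking powers.
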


\begin{proof}
Let $f$ and $g$ be a pair of \pA elements which generate a Schottky
group $F$ with limit set $\Lambda(F)$. Recall that a Schottky group is
a generated by a pair of \pA elements $f$ and $g$, which have
associated disjoint halfspaces $F_+, F_-, G_+$ and $G_-$, such that
$f(F_-^c) \subset F_+$, $f^{-1}(F^c_+) \subset F_-$, $g(G_-^c) \subset G_+$
and $g^{-1}(G^c_+) \subset G_-$, where $X^c$ denotes the complement of $X$. A
standard ping-pong argument shows that pairs of endpoints $(\l^+(h),
\l^-(h))$ are dense in $\Lambda(F) \cross \Lambda(F)$, as $h$ runs
over elements of $F$. Given $(\a,\b) \in \PMF \cross \PMF$, let $a_i$
be a sequence of \pA elements such that $\l^+(a_i)$ tends to $\a$, and
let $b_i$ be a sequence of \pA elements such that $\l^-(b_i)$ tends to
$\b$. Possibly after passing to a subsequence and taking powers, we
may assume that the pairs $\{ a_i, b_i \}$ generate Schottky groups
$F_i$, with limit sets $\Lambda(F_i)$. Furthermore, $\a$ and $\b$ lie
in the limit set of the limit sets $\Lambda(F_i)$, so we may choose a
sequence of \pA elements $h_i$ such that $\l^+(h_i)$ tends to $\a$ and
$\l^-(h_i)$ tends to $\b$.
\end{proof}

In particular, this implies that give any pair of train tracks $\s$
and $\t$, we can find an element $f$ in a complete subgroup such that
$f\s \prec \t$. The element $f$ may be chosen to be a suitably large
power of a \pA element with stable fixed point in $\t$, and unstable
fixed point in the complement of $\s$.

We first show that if any translate $gX$ has arbitrarily many disjoint
images $w_i gX$, where the word length $\norm{w_i}$ of the disjoint
images is bounded independent of $g$, then $X$ has harmonic measure
zero. We then use the train track condition described above to
construct such a sequence of disjoint translates.

\begin{lemma} \label{lemma:measure zero}
Let $(G,\mu)$ be a random walk on the mapping class group of an
orientable surface of finite type, which is not a sphere with three or
fewer punctures, such that the semi-group $H^-$ generated by $\rmu$ is
non-elementary. Let $\nu$ be the corresponding harmonic measure
on the boundary $\fmin$.

Let $X$ be a set with the property that there is a sequence $\{ k_i
\}_{i \in \N}$ such that for any translate $gX$ of $X$ there is a
sequence $\{ w_i\}_{i \in \N}$ of elements of $H^-$, such that the
translates $gX, w_1gX, w_2gX, \ldots$ are all disjoint, and
$\norm{w_i} \le k_i$ for all $i$. Then $\nu(X) = 0$.
\end{lemma}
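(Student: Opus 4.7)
The plan is to use $\mu$-stationarity of $\nu$ to establish a uniform bound $f_X(g) := \nu(g^{-1}X) \le c < 1$, and then to contradict this with the almost-sure convergence of the walk to the boundary whenever $\nu(X) > 0$. Note that $f_X$ is a bounded $\mu$-harmonic function on $G$: stationarity $\nu = \mu * \nu$ gives
\[
\sum_a \mu(a) f_X(ga) = \sum_a \mu(a) \nu(a^{-1} g^{-1} X) = f_X(g).
\]
By Theorem \ref{theorem:converge} the sample path converges almost surely to a limit $\xi_\infty \in \fmin$, and the Markov property identifies $f_X(w_n) = \P(\xi_\infty \in X \mid \mathcal{F}_n)$. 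Since $1_X(\xi_\infty)$ is $\mathcal{F}_\infty$-measurable, Levy's $0$-$1$ law then yields $f_X(w_n) \to 1_X(\xi_\infty)$ almost surely.

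To get the uniform bound, fix $h \in G$ and apply the hypothesis (with $i = 1$) to the translate $h^{-1}X$: there is $w \in H^-$ with $\norm{w} \le k_1$ such that $h^{-1}X$ and $wh^{-1}X$ are disjoint, giving $f_X(hw^{-1}) = \nu(wh^{-1}X) \le 1 - f_X(h)$. Since $w \in H^-$, write $w^{-1} = b_1 \cdots b_m$ as a product of at most $k_1$ elements of $\mathrm{supp}(\mu)$. With $p_{\min} := \min_{a \in \mathrm{supp}(\mu)} \mu(a) > 0$ and $q := \mu(b_1) \cdots \mu(b_m) \ge p_{\min}^{k_1}$, expanding $f_X(h)$ via $m$-step harmonicity and isolating the contribution of the specific increment sequence $(b_1, \ldots, b_m)$, while bounding all other terms by $1$, gives
\[
f_X(h) \le q \, f_X(hw^{-1}) + (1 - q) \le q(1 - f_X(h)) + (1 - q).
\]
Rearranging, $(1+q) f_X(h) \le 1$, so $f_X(h) \le 1/(1 + p_{\min}^{k_1}) =: c < 1$. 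If $\nu(X) > 0$, the event $\{\xi_\infty \in X\}$ has positive probability, and on it $f_X(w_n) \to 1$; this contradicts the uniform bound $f_X \le c < 1$. Therefore $\nu(X) = 0$.

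The boundary convergence $f_X(w_n) \to 1_X(\xi_\infty)$ is the main ingredient, packaging Theorem \ref{theorem:converge} with Levy's $0$-$1$ law; the rest of the argument is a soft combinatorial application of harmonicity, and only the single translate $w_1$ is used from the sequence supplied by the hypothesis. The requirement that $w_i \in H^-$ (rather than $H^+$) is essential: it is precisely what guarantees that $w^{-1}$ lies in $H^+$ and is therefore realised by a short $\mu$-sample path, yielding the positive lower bound $q \ge p_{\min}^{k_1}$ on the probability of the corresponding increment sequence.
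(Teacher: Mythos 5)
Your argument is essentially correct but takes a genuinely different route from the paper's. The paper stays entirely at the level of stationarity: assuming $s = \sup_g \nu(gX) > 0$, it fixes $N > 2/s$, picks a translate $gX$ with measure within $\e$ of the supremum, and uses $\widetilde\mu_n$-stationarity to show that each of the $N$ disjoint translates $w_i gX$ also has measure close to $s$, so the total measure of the disjoint union exceeds $1$. That argument genuinely uses an unbounded number of disjoint translates (the number needed grows as $s \to 0$), which is why the hypothesis supplies an infinite sequence. Your argument instead packages the boundary convergence of Theorem \ref{theorem:converge} with the identification of the conditional law of $\xi_\infty$ given $\mathcal{F}_n$ as the translated measure $w_n\nu$, plus L\'evy's $0$--$1$ law, to get $\nu(w_n^{-1}X) \to 1_X(\xi_\infty)$ almost surely; the uniform bound $\nu(h^{-1}X) \le c < 1$ then only requires a \emph{single} disjoint translate per $h$, of uniformly bounded word length. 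This is an economical use of the hypothesis (it proves the lemma under a strictly weaker assumption), at the cost of heavier, though standard, probabilistic machinery; it also tacitly requires $X$ to be $\nu$-measurable so that $1_X(\xi_\infty)$ is a random variable, an assumption the paper also makes implicitly.

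One step needs repair. From $w \in H^-$ and $\norm{w} \le k_1$ you cannot conclude that $w^{-1}$ is a product of at most $k_1$ elements of $\mathrm{supp}(\mu)$: the quantity $\norm{w}$ is word length with respect to a fixed finite generating set of $G$, not the semigroup length of $w$ over $\mathrm{supp}(\widetilde\mu)$, and there is no a priori comparison between the two. Hence the stated lower bound $q \ge p_{\min}^{k_1}$ is unjustified. What you actually need --- a positive lower bound on $\mu_m(w^{-1})$ that is uniform over all admissible $w$ --- is nevertheless true: the ball of radius $k_1$ in the word metric is finite, so for each of the finitely many $w \in H^-$ in that ball there is some $m_w$ with $\widetilde\mu_{m_w}(w) = \mu_{m_w}(w^{-1}) > 0$, and one takes the minimum of these finitely many positive numbers as the uniform constant $q$ (this is precisely the role of the constant $M$ in the paper's proof). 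With that substitution, your inequality $f_X(h) \le 1/(1+q) < 1$ holds for all $h$, and the rest of the argument goes through.
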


The sequence of group elements $\{ w_i \}$ may depend on $g$, but the
sequence of numbers $\{ k_i \}$ does not.  We emphasize that the
lengths of the $w_i$ are bounded in the word metric on $G$, not just
relatively bounded, as we make essential use of this in the proof. The
idea of the proof is to consider a translate $gX$ which is very close
to the supremum. As the measure $\nu$ is $\mu$-stationary, if
$\mu(h^{-1}) = \rmu(h) \not = 0$, then $\nu(hgX)$ must also be close
to the supremum. So if for any $gX$ we can produce a collection of
disjoint translates $w_i gX$, where the lengths of the $w_i$ are
bounded independently of $g$, then by choosing $gX$ arbitrarily close
to the supremum, we can force the $w_i gX$ to also be close to the
supremum.  However, this implies that the total measure of the
disjoint translates $w_i gX$ is bigger than one, a contradiction.

\begin{proof}
Suppose that $\sup \{ \nu(gX) \mid g \in G\} = s > 0$, and choose $N >
2/s$. For each $h \in H^-$, let $n_h$ be the smallest $n$ such that
$\rmun{n}(h) > 0$. Let $M = \max \{ 1/\rmun{n_h}(h) \mid h \in H^-, \norm{h}
\leqslant k_N\}$, and set $\e = 1/(MN)$. Finally, choose $g$ such that
the harmonic measure of $gX$ is within $\e$ of the supremum,
i.e. $\nu(gX) \ge s - \e$, and let $w_igX$ be a sequence of $N$ disjoint
translates with $\norm{w_i} \le k_i$ for each $i$ between $1$ and $N$.

The harmonic measure $\nu$ is $\mu$-stationary, and hence
$\mun{n}$-stationary for any $n$, which implies
\begin{align*} 
\nu(gX) &= \sum_{h \in G} \mun{n}(h) \nu(h^{-1}gX).
\intertext{
For notational convenience we shall write $h^{-1}$ instead
  of $h$, and we may then rewrite the formula above as
}
\nu(gX) &= \sum_{h \in G} \rmun{n}(h) \nu(hgX).
\intertext{
Let $f$ be an element of $H^-$, and choose $n$ such that
$\rmun{n}(f) > 0$. Then }
\rmun{n}(f) \nu(fgX) &= \nu(gX) \ -
\sum_{h \in G \setminus f} \rmun{n}(h) \nu(hgX).  
\intertext{%
As we have chosen $gX$ to have measure within $\e$ of the supremum,
this implies
}
\rmun{n}(f) \nu(fgX) & \ge s-\e -\sum_{h \in G \setminus f} \rmun{n}(h) \nu(hgX).
\intertext{%
The harmonic measure of each translate of $X$ is at most the supremum
$s$, so }
\rmun{n}(f) \nu(fgX) & \geqslant s - \e - s \sum_{h \in G \setminus f} \rmun{n}(h), 
\intertext{%
and the sum of $\rmun{n}(h)$ over all $h \in G \setminus f$
  is just $1-\rmun{n}(f)$, which implies that
}
\rmun{n}(f) \nu(fgX) & \geqslant s - \e - s(1-\rmun{n}(f)).
\intertext{%
Dividing by $\rmun{n}(f)$ gives the following estimate for the
harmonic measure of the translate,
}
\nu(fgX) & \geqslant s - \e /\rmun{n}(f),
\intertext{%
for any $n$ such that $\rmun{n}(f) > 0$.
In particular, this estimate holds for each of the $N$ disjoint
translates $w_i gX$, with $n = n_{w_i}$, the
smallest value of $n$ such that $\rmun{n}(w_i) > 0$. Furthermore, as we
chose $M$ such that $M \ge 1/\rmun{n_h}(h)$ for all  $h \in H^-$ with
$\norm{h} \le k_N$, this implies that
}
\nu(w_i gX) & \geqslant s - \e M.
\end{align*}
As all the translates $w_i g X$ are disjoint,
\[ \nu(w_1 gX) + \dots + \nu(w_N gX)  \geqslant N(s-\e M). \]
As we chose $N > 2/s$, and  $\e = 1/(MN)$, this implies that the
total measure $\nu(\bigcup w_igX)$ is greater than one, a contradiction.
\end{proof}

We now complete the proof of Theorem \ref{theorem:split zero}, by
using the train track property to construct disjoint translates
$w_igX$ of any translate $gX$, with a bound on the word length of the
$w_i$.

\begin{proof} (of Theorem \ref{theorem:split zero})
Let $\t$ be a train track such that for any image $gX$ of $X$, $\t$
may be split at most $N$ times to produce a train track $\t'$ which is
disjoint from $gX$.  Let $\{ \t_1, \ldots , \t_b \}$ be all possible
train tracks obtained by splitting $\t$ at most $N$ times. For each
$\t_i$ chose an element $g_i \in H^-$ such that $g_i \t$ is carried by
$\t_i$. Such an element $g_i$ exists in $H^-$ as we have assumed that
$H^-$ contains a complete subgroup of the mapping class group, and
furthermore, we may choose $g_i$ such that $g_i^{-1}$ is also in $H^-$. There
is an ``inversion'' map $h$ for $\t$ that takes the
exterior of the train track polyhedron $N(\t)$ into the interior of
$N(\t)$. We can construct such a map by first choosing an element $h'$
such that $h'\t$ is disjoint from $\t$, and then composing with a
large power of a \pA with stable endpoint in the interior of $N(\t)$,
and unstable endpoint in $N(h'\t)$. Again such an element $h$ exists in
$H^-$, as we have assumed that $H^-$ contains a complete subgroup of
the mapping class group.  Let $M$ be the maximum length in
the word metric of any of the words $g_i$ or $h$ we have chosen above.

Start with the train track $\t$, and let $gX$ be some translate of
$X$. The train track $\t$ may be split at most $N$ times to produce a
train track $\t_{i_i}$ disjoint from $gX$. There is a translate
$g_{i_1}\t$ of $\t$ carried by $\t_{i_i}$. The map $h$ takes the
exterior of $N(\t)$ to the interior of $N(\t)$, so the conjugate of
$h$ by $g_{i_1}$, namely $g_{i_1}hg_{i_1}^{-1}$ takes the exterior of
$N(g_{i_1}\t)$ to the interior of $N(g_{i_1}\t)$. This implies that
$g_{i_1}hg_{i_1}^{-1}gX$ lies in the interior of the train track
$g_{i_1}\t$, and so in particular is disjoint from $gX$.

We may continue this process, starting with the train track
$g_{i_1}\t$, and the image $g_{i_1}hg_{i_1}^{-1}gX$ of $gX$. The train
track $\t$ may be split at most $N$ times to form a train track $\t_j$
disjoint from any image of $gX$, e.g. $hg_{i_1}^{-1}gX$. This implies
that $g_{i_1}\t$ may be split at most $N$ times to form a train track
$g_{i_1}\t_{i_2}$ disjoint from $g_{i_1}hg_{i_1}^{-1}gX$. The
corresponding image of $\t$ carried by $g_{i_1}$ will be
$g_{i_1}g_{i_2}\t$. Using the corresponding conjugate of the inversion
map, $g_{i_1}g_{i_2}h(g_{i_1}g_{i_2})^{-1}$ we can construct a new
image of $gX$, namely $g_{i_1}g_{i_2}h(g_{i_1}g_{i_2})^{-1}gX$,
disjoint from the previous two.  Therefore we can inductively
construct a sequence of disjoint images of $gX$.  The $k$-th disjoint
translate is $w_k gX$ where $w_k = (g_{i_1}\ldots
g_{i_k})h(g_{i_1}\ldots g_{i_k})^{-1}$, so each $w_k$ lies in $H^-$,
and the word length of $w_k$ is at most $(2k+1)M$.
\end{proof}

\section{Independence} \label{section:independence}

In this section we show that the joint distribution in $\fmin \cross
\fmin$ of the pairs of stable and unstable foliations of sample path
locations $w_n$ which are \pA, converges to the distribution $\nu
\cross \rnu$ as $n$ tends to infinity, where $\nu$ is the harmonic
measure determined by $\mu$, and $\rnu$ is the reflected harmonic
measure, i.e. the harmonic measure determined by $\rmu$.

The argument has two main steps. First we show that the joint
distribution of sample path locations and their inverses $(w_n,
w_n^{-1})$ is asymptotically independently distributed as $\nu \cross
\rnu$. Secondly, we show that the joint distribution of stable and
unstable foliations of \pA elements $(\l^+(w_n),\l^-(w_n))$ converges
to the same limit as the limit of the distributions of
$(w_n,w_n^{-1})$.

Recall that the probability space $(\gz, \P)$ is the space of all
sample paths of the random walk, and $w_n$ is the image of $w$ under
projection onto the $n$-th factor, which gives the location of the
sample path at time $n$.  Consider the sequence of random variables $w
\mapsto (w_n, w_n^{-1})$. We shall write $X_n$ for the corresponding
measures on $\Gbar \cross \Gbar$, defined by $X_n(U) = \P( (w_n,
w_n^{-1}) \in U)$, where $\Gbar$ is the union of the relative space
with its Gromov boundary.  Note that the sequence $w_n$ converges in
$\Gbar$ almost surely, but the sequence $w_n^{-1}$ need not converge
in $\Gbar$, so convergence in distribution, or weak-$*$ convergence,
is the most that can be expected.  Furthermore, $X_n$ is not the
product $\mun{n} \cross \rmun{n}$, although the induced measures on
each factor are $\mun{n}$ and $\rmun{n}$, respectively.  However, we
will now show that the limit $\lim_{n \to \infty}X_n$ is in fact equal
to the product $\nu \cross \widetilde \nu$.

\begin{theorem} 
\label{theorem:independence}
Let $G$ be the mapping class group of an orientable surface of finite
type, which is not a sphere with three or fewer punctures. Let $\mu$
be a finitely supported probability distribution on $G$, whose support
generates a non-elementary subgroup of $G$. Let $(\gz, \P)$ be the
space of sample paths for the random walk generated by $\mu$, and let
$w_n$ be the location of the sample path $\w$ at time $n$. Let $\Gbar$
be the relative space for $G$ union its Gromov boundary.  Define a
sequence of measures on $\Gbar \cross \Gbar$ by $X_n(U) = \P((w_n,
w_n^{-1}) \in U)$. Then
\[ \lim_{n \to \infty} X_n = \nu \cross \rnu, \]
i.e. the measures $X_n$ converge in distribution to $\nu \cross
\rnu$, where $\nu$ is the harmonic measure determined by $\mu$, and
$\nu$ is the reflected harmonic measure determined by $\mu$.
\end{theorem}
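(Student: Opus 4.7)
The plan is to reduce $(w_{2m}, w_{2m}^{-1})$ to a genuinely independent pair via a midpoint decomposition, and then use $\delta$-hyperbolic geometry to show that the residual difference is negligible against continuous bounded test functions. Writing $n = 2m$ and setting $v_m := w_{2m}^{-1} w_m = a_{2m}^{-1} a_{2m-1}^{-1} \cdots a_{m+1}^{-1}$, one has the identities $w_{2m} = w_m v_m^{-1}$ and $w_{2m}^{-1} = v_m w_m^{-1}$. Since $w_m$ depends only on the first half of the increments and $v_m$ only on the second half, the pair $(w_m, v_m)$ is independent with joint distribution $\mun{m} \times \rmun{m}$, which by Theorem \ref{theorem:converge} applied separately to $\mu$ and $\rmu$ converges weakly on $\Gbar \times \Gbar$ to $\nu \times \rnu$.

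By the test-function criterion for weak convergence, it therefore suffices to show
\[ \mathbb{E}\bigl[f(w_{2m}) g(w_{2m}^{-1})\bigr] - \mathbb{E}\bigl[f(w_m) g(v_m)\bigr] \ \longrightarrow\ 0 \]
for every pair of continuous bounded $f, g$ on $\Gbar$. A triangle inequality further reduces this to the single-factor estimates $\mathbb{E}|f(w_{2m}) - f(w_m)| \to 0$ and $\mathbb{E}|g(w_{2m}^{-1}) - g(v_m)| \to 0$. The bridge is the halfspace approximation property: for every fixed $r > 0$,
\[ \P\bigl(\, w_{2m} \in \Hbar{1, w_m},\ w_{2m}^{-1} \in \Hbar{1, v_m},\ \nhat{w_m} \ge r,\ \nhat{v_m} \ge r \,\bigr) \ \longrightarrow\ 1 \]
as $m \to \infty$. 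Granting these halfspace events, Proposition \ref{prop:double} (nearest point projection paths are almost geodesic) gives $\nhat{w_{2m}} \approx \nhat{w_m} + \nhat{v_m}$ and $\nhat{w_{2m}^{-1}} \approx \nhat{v_m} + \nhat{w_m}$, which in turn forces the Gromov products $(w_m \mid w_{2m})_1$ and $(v_m \mid w_{2m}^{-1})_1$ to tend to infinity on this event. Continuity of $f$ and $g$ on $\Gbar$ then yields $|f(w_{2m}) - f(w_m)| \to 0$ and $|g(w_{2m}^{-1}) - g(v_m)| \to 0$ in probability, and bounded convergence upgrades these to the required $L^1$ statements.

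The halfspace approximation is itself established as follows. The length bounds $\nhat{w_m}, \nhat{v_m} \ge r$ follow from the almost-sure positive drift of $\nhat{w_k}$ in the relative metric from \cite{maher2}, applied both to $\mu$ and to $\rmu$. Theorem \ref{theorem:converge} yields almost-sure convergence $w_k \to F(w) \in \fmin$, and Theorem \ref{theorem:uniform} says the sample path is tracked by a $\Ksplit$-\qg from $1$ to $F(w)$ up to bounded error; consequently, once $\nhat{w_m}$ is sufficiently large, every later $w_k$ lies in $\Hbar{1, w_m}$, and in particular $w_{2m}$ does. The analogous statement for $w_{2m}^{-1}$ comes from reindexing the increments: setting $\tilde b_i := a_{2m+1-i}^{-1}$ for $1 \le i \le 2m$ gives i.i.d.\ $\rmu$-distributed random variables with $\tilde b_1 \cdots \tilde b_{2m} = w_{2m}^{-1}$ and $\tilde b_1 \cdots \tilde b_m = v_m$, so the halfspace event for $w_{2m}^{-1}$ follows from the same argument applied to this $\rmu$-random walk.

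The principal obstacle is the halfspace approximation step itself: one must fix the threshold $r$ before letting $m \to \infty$, which requires a quantitative version of the alignment of $w_m$ with the quasigeodesic ray to $F(w)$. A secondary subtlety is that the halfspace events for the forward and reversed walks at time $2m$ live on the same underlying probability space, so one must verify via the coupling above that they occur simultaneously with probability tending to one. Once these are established, weak convergence of $X_{2m}$ to $\nu \times \rnu$ follows from the two $L^1$ estimates combined with the weak convergence of $\mun{m} \times \rmun{m}$; the odd subsequence $X_{2m+1}$ is handled identically by splitting the sample path into pieces of lengths $m$ and $m+1$.
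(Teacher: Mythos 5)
Your overall architecture matches the paper's: decompose at the midpoint, observe that $(w_m, v_m)$ with $v_m = w_{2m}^{-1}w_m$ is genuinely independent with law $\mun{m} \cross \rmun{m}$, and transfer the limit $\nu \cross \rnu$ to $(w_{2m}, w_{2m}^{-1})$ via the halfspace events $w_{2m} \in \Hbar{1,w_m}$ and $w_{2m}^{-1} \in \Hbar{1,v_m}$. However, your proof of the halfspace events is the step that does not work, and you have correctly sensed this by calling it ``the principal obstacle'' --- but you have not resolved it, and your proposed route cannot. Theorem \ref{theorem:uniform} only asserts that any two points of $\Gbar$ are joined by \emph{some} $\Ksplit$-\qg; it says nothing about the sample path being tracked by that \qg, and certainly not ``up to bounded error.'' Moreover, almost-sure convergence $w_k \to F(w)$ only gives $\gp{w_m}{w_{2m}} \to \infty$, whereas membership $w_{2m} \in H(1,w_m)$ is equivalent to $\gp{w_m}{w_{2m}} \ge \half \nhat{w_m}$, and $\nhat{w_m}$ itself grows linearly in $m$; so qualitative convergence to the boundary cannot deliver the halfspace event. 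The paper's Claim \ref{claim:walk in halfspace} is proved by a different mechanism that you never invoke: condition on $w_m = g$, write $\P(w_{2m} \in H(1,g) \mid w_m = g) = \mun{m}(H(g^{-1},1)) \ge 1 - \mun{m}(H(1,g^{-1}))$, and apply the exponential decay estimate $\mun{n}(H(1,x)) \le QL^{\nhat{x}}$ of Lemma \ref{lemma:mu-n-decay}. That quantitative decay is exactly the ingredient your outline is missing.

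There is a second gap in the transfer step. Granting the halfspace events, \pref{gp} gives a lower bound on the Gromov products, but passing from ``$\gp{v_m}{w_{2m}^{-1}}$ is large'' to ``$|g(w_{2m}^{-1}) - g(v_m)|$ is small'' requires $g$ to be \emph{uniformly} continuous with respect to the Gromov-product uniformity. Since $\fmin$, and hence $\Gbar$, is not compact, a bounded continuous test function need not have this property, so your $L^1$ estimates do not follow as stated. (For the first factor you could instead note that $w_m$ and $w_{2m}$ are terms of a single almost-surely convergent sequence, so $f(w_{2m}) - f(w_m) \to 0$ a.s.\ by continuity at the limit point; but this is unavailable for the second factor, because the reversed reindexing $\tilde b_i = a_{2m+1-i}^{-1}$ depends on $m$, so $\{v_m\}_m$ is not a single sample path of the $\rmu$-walk.) The paper avoids both difficulties by never using test functions: it sandwiches $X_{2n}$ evaluated on products of closed boundary sets between $Y_n$ evaluated on the halfspace neighbourhoods $N_r(T)$ and $N_r^3(T)$, and then lets $r \to \infty$ using $\bigcap_r N_r(T) = T$ (Lemma \ref{lemma:intersections}) and $N_r(N_r(T)) \subset N_{r - \Knbd}(T)$ (Lemma \ref{lemma:neighbourhood}). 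To repair your argument you would either need to restrict to a class of test functions that are uniformly continuous in the visual sense and justify that this class determines weak convergence, or adopt the paper's set-theoretic sandwich; in either case the exponential decay lemma is indispensable for the halfspace step.
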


We will make use of the following results from \cite{maher1} and
\cite{maher2}. The first says that a random walk gives rise to an
element of the mapping class group which is not conjugate to a
relatively short element with asymptotic probability one.

\begin{theorem} \cite{maher1}
\label{theorem:pa}
Let $G$ be the mapping class group of an orientable surface of finite
type, which is not a sphere with three or fewer punctures. Let $\mu$
be a probability distribution on $G$ whose support generates a
non-elementary subgroup of $G$.  Then for any constant $R$, the
probability that $w_n$ is conjugate to an element of relative length
at most $R$, tends to zero as $n$ tends to infinity.
\end{theorem}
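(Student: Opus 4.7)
The plan is to prove the equivalent statement that the translation length $\t_{w_n}$ of $w_n$ acting on $\Grel$ tends to infinity in probability, since the minimum of $\nhat{v}$ over $v$ in the conjugacy class of $w_n$ equals the minimum displacement of $w_n$ on the orbit $G \cdot x_0 = \Grel$, which coincides with $\t_{w_n}$ up to an additive $O(\delta)$.

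The main tool I would use is \pref{translation}: once $w_n$ admits a $K$-quasi-axis $\axis(w_n)$ and $\t_{w_n} \ge \Ktrans$, we have $|\t_{w_n} - \nhat{w_n} + 2\dhat{x_0, \axis(w_n)}| \le \Ktrans$. Thus it suffices to show, with probability tending to one, that (i) $w_n$ is \pA and so has a quasi-axis, (ii) $\nhat{w_n}$ grows at least linearly in $n$, and (iii) $\dhat{x_0, \axis(w_n)}$ stays bounded.

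First I would establish (i) and (ii), which are essentially standard. For (i), Theorem \ref{theorem:converge} gives almost sure convergence of the forward walk to a foliation $\l^+ \in \fmin$, distributed as $\nu$, and applying the same theorem to the reverse walk yields a.s.\ convergence of $w_n^{-1} x_0$ to some $\l^- \in \fmin$, distributed as $\rnu$. Since these harmonic measures are non-atomic, $\l^+ \neq \l^-$ almost surely (after extending to a two-sided walk with independent past and future). For large $n$ the element $w_n$ then has distinct attracting and repelling fixed points in $\fmin$, making it \pA. For (ii), one applies Kingman's subadditive ergodic theorem to $n \mapsto \nhat{w_n}$; positivity of the drift follows from convergence to $\fmin$, as shown in \cite{maher2}.

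The hard part will be (iii), bounding $\dhat{x_0, \axis(w_n)}$. My plan is to extend the walk to a bi-infinite sample path with independent past and future, giving almost surely distinct backward and forward limits $\l^-, \l^+$. The stable and unstable fixed points of $w_n$ should converge to $\l^+$ and $\l^-$ respectively, so for large $n$ the quasi-axis $\axis(w_n)$ lies in a bounded Hausdorff neighborhood of a $\Ksplit$-\qg connecting $\l^-$ to $\l^+$, by Theorem \ref{theorem:uniform}. Since $x_0$ sits on the bi-infinite sample path between these two limits, the Gromov product $\gp{\l^-}{\l^+}$ based at $x_0$ is bounded in probability, placing $x_0$ within bounded relative distance of this \qg, and hence of the axis of $w_n$. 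Combining (i)--(iii) with \pref{translation} then yields linear growth of $\t_{w_n}$ in probability, which is much stronger than the stated conclusion.
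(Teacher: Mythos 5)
The reduction at the start is fine: if $w_n = hkh^{-1}$ with $\nhat{k} \le R$, then $\t_{w_n} = \t_k \le \nhat{k} \le R$, so it suffices to show $\P(\t_{w_n} \le R) \to 0$ for every $R$. The gap is in how you propose to bound the translation length from below, and it sits exactly at the point this paper identifies as the main difficulty. You assert that applying Theorem \ref{theorem:converge} to the reverse walk gives almost sure convergence of $w_n^{-1}x_0$ to some $\l^-$. That is false: $w_n^{-1} = a_n^{-1}\cdots a_1^{-1}$ is obtained from $w_{n-1}^{-1}$ by multiplying on the \emph{left}, so $(w_n^{-1})_n$ is not a sample path of the reflected walk and need not converge in $\Gbar$ -- the paper says this explicitly just before Theorem \ref{theorem:independence}, and only convergence in distribution holds. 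The bi-infinite extension you invoke produces an a.s.\ backward limit for the backward walk $a_0^{-1}a_{-1}^{-1}\cdots$, which is a genuinely different sequence from $w_n^{-1}$; transferring control from one to the other is precisely the content of the asymptotic-independence theorem (Theorem \ref{theorem:independence}), which is proved \emph{using} non-backtracking and halfspace-decay estimates, not assumed. So "$x_0$ sits on the bi-infinite sample path between these two limits" does not give you a bound on the quantity you actually need, namely $\gp{w_nx_0}{w_n^{-1}x_0}$.

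There is a second, related problem: the step "the stable and unstable fixed points of $w_n$ converge to $\l^+$ and $\l^-$, so the quasi-axis of $w_n$ fellow-travels a \qg from $\l^-$ to $\l^+$ and hence passes near $x_0$" is circular as written. To locate the fixed points of $w_n$ you must already know that $w_n$ is a hyperbolic isometry with large translation length -- that is the direction of \pref{fixed point} -- which is what you are trying to prove. The non-circular route is the standard classification criterion: an isometry $g$ with $\dhat{x_0,gx_0} \ge 2\gp{gx_0}{g^{-1}x_0} + C\delta$ is hyperbolic, with $\t_g \ge \dhat{x_0,gx_0} - 2\gp{gx_0}{g^{-1}x_0} - C\delta$ and axis passing within roughly $\gp{gx_0}{g^{-1}x_0}$ of $x_0$. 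You would then need to show that $\gp{w_nx_0}{w_n^{-1}x_0}$ is bounded, or at least $o(\nhat{w_n})$, with probability tending to one -- and that again runs through the joint distribution of $(w_n, w_n^{-1})$, i.e.\ the hard part. A minor further point: the theorem as stated assumes no finite support, so Kingman's theorem and Theorem \ref{theorem:linear} are not available; fortunately for the stated conclusion you only need $\nhat{w_n}\to\infty$, which already follows from almost sure convergence to the boundary.
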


The second states that both the harmonic and the $\mun{n}$-measures of
a halfspace $\overline{H(1,x)}$ decay exponentially in $\nhat{x}$.

\begin{lemma} \cite{maher2}
\label{lemma:exponential} \label{lemma:mu-n-decay}
Let $\mu$ be a finitely supported probability distribution on $G$
whose support generates a non-elementary subgroup, and let $\nu$ be
the corresponding harmonic measure. Then there are constants $\Kexp,
Q$ and $L<1$, such that $ \nu(\overline{H(1,x)}) \leqslant
L^{\nhat{x}}$, and $ \mun{n}(H(1,x)) \leqslant Q L^{\nhat{x}}$, for
$\nhat{x} \geqslant \Kexp$.  The constants $\Kexp, Q$ and $L$ depend
on $\mu$ and $\delta$, but not on $x$ or $n$.
\end{lemma}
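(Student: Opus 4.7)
The plan is to prove the harmonic-measure bound first and then deduce the $\mu^{(n)}$-bound from it. Both estimates reflect the same principle: the random walk has positive drift in a $\nu$-typical direction, so the mass placed in any narrowly-specified halfspace $\overline{H(1,x)}$ decays exponentially in its depth $\nhat{x}$.

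For the bound $\nu(\overline{H(1,x)}) \le L^{\nhat{x}}$, I would fix a geodesic segment from $1$ to $x$ in $\Grel$ and mark points $1 = x_0, x_1, \ldots, x_m = x$ along it, equally spaced at distance $D \ge \Knested$, so that the closed halfspaces $\overline{H(1, x_k)}$ are strictly nested by \pref{nested}. The goal is a uniform ratio estimate
\[
\nu\bigl(\overline{H(1, x_{k+1})}\bigr) \;\le\; (1-\varepsilon)\, \nu\bigl(\overline{H(1, x_k)}\bigr)
\]
for some $\varepsilon > 0$ independent of $k$ and the choice of geodesic. The mechanism is as follows: by $\mu$-stationarity one may expand $\nu(\overline{H(1, x_{k+1})})$ via $\nu = \mu^{(N)} \ast \nu$ for a suitable $N$; the mass in the deeper halfspace corresponds to boundary points reached by sample paths that remain inside the nesting, and by non-elementarity there is a uniform positive probability that $N$ consecutive increments push the sample path outward through the shell between $\overline{H(1,x_k)}$ and $\overline{H(1,x_{k+1})}$. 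Iterating the ratio estimate $m$ times with $mD$ comparable to $\nhat{x}$ yields $\nu(\overline{H(1,x)}) \le (1-\varepsilon)^m$, which is bounded above by $L^{\nhat{x}}$ for $L = (1-\varepsilon)^{1/D}$, once $\nhat{x}$ is at least some threshold $\Kexp$.

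For the bound $\mu^{(n)}(H(1,x)) \le Q L^{\nhat{x}}$, I would condition on the time-$n$ position $w_n = g$ and consider the tail of the walk $(g^{-1}w_{n+k})_{k \ge 0}$, which is an independent $\mu$-random walk converging almost surely to a $\nu$-distributed boundary point. If $g \in H(1,x)$, then by \pref{half} the nearest point projection of $g$ onto $[1,x]$ lies within $3\delta$ of $x$, so the future trajectory from $g$ starts well inside the halfspace. By the positive drift of the walk and a Schottky-type argument analogous to the escape estimate above, conditionally the limit $\lim_k w_{n+k}$ lies in $\overline{H(1, x')}$ with probability at least $c > 0$, where $x'$ is the point on $[1,x]$ with $\nhat{x'} = \nhat{x} - C_0$ for some fixed constant $C_0$. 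Since the unconditional law of this limit is $\nu$, this yields
\[
c \cdot \mu^{(n)}\bigl(H(1,x)\bigr) \;\le\; \nu\bigl(\overline{H(1,x')}\bigr) \;\le\; L^{\nhat{x} - C_0},
\]
giving the claim with $Q = c^{-1} L^{-C_0}$.

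The main obstacle is the uniform escape estimate underlying both arguments: one needs a single $N$ and $\varepsilon$ that work for every halfspace $\overline{H(1,x)}$, independent of how $x$ is chosen. Converting the qualitative non-elementarity of the support of $\mu$ into such a quantitative uniform statement requires a careful ping-pong construction, valid after translating an arbitrary halfspace back to a standard position via the mapping class group action. The finite support hypothesis is essential here, since it guarantees that a uniform word-length bound suffices for the escape move and that the relevant geometric data vary in a controlled family.
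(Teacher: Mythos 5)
First, note that the paper does not actually prove this lemma --- it is imported verbatim from \cite{maher2} --- so the comparison below is with the argument given there. Your overall architecture does match it: nested halfspaces spaced along $[1,x]$ using \pref{nested}, a uniform contraction ratio between the measures of consecutive halfspaces, and a reduction of the $\mun{n}$-bound to the harmonic-measure bound by applying the Markov property to the tail of the walk after time $n$.

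There are, however, two genuine gaps. The first is the mechanism you propose for the ratio estimate. Expanding $\nu = \mun{N} * \nu$ gives $\nu(\overline{H(1,x_{k+1})}) = \sum_g \mun{N}(g)\,\nu(g^{-1}\overline{H(1,x_{k+1})})$, and since every $g$ in the support of $\mun{N}$ has bounded length, each translate $g^{-1}\overline{H(1,x_{k+1})}$ is a halfspace based at a bounded perturbation of $1$, hence is \emph{contained} in $\overline{H(1,x_k)}$ once the spacing $D$ is large enough. This yields only the monotone inequality $\nu(\overline{H(1,x_{k+1})}) \le \nu(\overline{H(1,x_k)})$: no choice of $N$ produces a factor $(1-\varepsilon)$, because convolving by finitely many steps taken \emph{from the origin} (which is far from both halfspaces) cannot push a positive-$\mun{N}$-mass set of translates of the deep halfspace out of the shallow one. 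The contraction must instead come from a stopping-time decomposition along sample paths: if the limit point lies in $\overline{H(1,x_{k+1})}$ then the path must first enter $H(1,x_k)$ (by \pref{nested} and the closedness of $\overline{H(x_k,1)}$), and one conditions on that first entry and applies a uniform escape estimate from the entry point. The second gap is that this uniform escape estimate --- a single $\varepsilon>0$ such that from \emph{any} basepoint the walk fails to penetrate a given sufficiently deep halfspace ahead of it with probability at least $\varepsilon$ --- is the quantitative heart of both bounds; you invoke it twice and then explicitly defer it as ``the main obstacle,'' so neither inequality is actually established. (A minor further slip: \pref{half} places the projection of $g \in H(1,x)$ within $3\delta$ of the half of $[1,x]$ nearer to $x$, i.e.\ at distance at least $\tfrac{1}{2}\nhat{x}-3\delta$ from $1$, not within $3\delta$ of $x$ itself; this only costs a factor of two in the exponent and is harmless.)
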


We now prove Theorem \ref{theorem:independence}.

\begin{proof} (of Theorem \ref{theorem:independence}) 
We start by showing that it
suffices to show that $X_{2n}$ converges to $\nu \cross \rnu$. 
Let $r$ be the diameter of the support of $\mu$, and let $A$ be an
open set in $\Gbar$. We shall write $\eta_+(A)$ for an
$r$-neighbourhood of $A$ in the relative metric, and $\eta_-(A)$ for
$A \setminus \eta_r(\Gbar \setminus A)$. In particular, the limit sets
of $A$, $\eta_+(A)$ and $\eta_-(A)$ in the Gromov boundary $\d G$ are
all the same. The distance between two positions of a sample path at
adjacent times is at most $r$, so if a sample path of length $2n$ lies
in $\eta_-(A)$, then next location of the sample at time $2n+1$ lies
in $A$, and similarly the location of the sample path at time $2n+2$ lies in
$\eta_+(A)$. Therefore if $A$ and $B$ are open sets in $\Gbar$, then 
\[ X_{2n}(\eta_-(A) \cross \eta_-(B)) \le X_{2n+1}(A \cross B) \le
X_{2n+2}(\eta_+(A) \cross \eta_+(B)). \]
Therefore, if $X_{2n}$ converges to $\nu \cross \rnu$, then both the
right and left hand terms in the inequality above converge to
$\nu(\overline{A}) \cross \rnu(\overline{B})$, and so the central term
also converges to $\nu(\overline{A}) \cross \rnu(\overline{B})$.

We now show that $X_{2n}$ converges to $\nu \cross \rnu$. At time $n$,
the location of a sample path $w_n$ determines a halfspace
$H(1,w_n)$. It is most likely that the sample path will still be in
this halfspace after another $n$ steps, as illustrated below in Figure
\ref{picture1}. We now make this precise.

\begin{figure}[H] 
\begin{center}
\epsfig{file=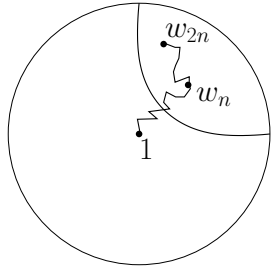, height=100pt}
\end{center}
\caption{Staying in a halfspace.}\label{picture1}
\end{figure}

\begin{claim} 
\label{claim:walk in halfspace} 
The probability that $w_{2n}$ lies in $H(1,w_n)$ tends to one as $n$
tends to infinity.
\end{claim}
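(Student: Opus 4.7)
The plan is to reduce the halfspace condition to an inequality between relative word lengths, which then follows from linear progress of the random walk in the relative metric. First, $w_{2n} \in H(1, w_n)$ unpacks to $\dhat{w_n, w_{2n}} \le \dhat{1, w_{2n}}$, and since the relative metric on $\Grel$ is left-invariant, $\dhat{w_n, w_{2n}} = \dhat{1, w_n^{-1} w_{2n}} = \dhat{1, a_{n+1} \cdots a_{2n}}$, where the $a_i$ are the i.i.d.\ increments of the random walk. The second half $a_{n+1} \cdots a_{2n}$ is therefore independent of $w_n$ and distributed as $\mun{n}$.

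Next I would invoke the linear progress result of \cite{maher2}, which gives a constant $\ell > 0$ with $\tfrac{1}{n}\dhat{1, w_n} \to \ell$ in probability as $n \to \infty$. Applied to the two walks of interest, this yields $\tfrac{1}{2n}\dhat{1, w_{2n}} \to \ell$ and $\tfrac{1}{n}\dhat{1, a_{n+1} \cdots a_{2n}} \to \ell$, the latter being an equidistributed shift of $\tfrac{1}{n}\dhat{1, w_n}$.

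Fix any $\epsilon$ with $0 < \epsilon < \ell/2$. By the two convergences above and a union bound, with probability tending to one we simultaneously have $\dhat{1, w_{2n}} \ge (2\ell - \epsilon)n$ and $\dhat{w_n, w_{2n}} \le (\ell + \epsilon)n$. Since $\ell + \epsilon < 2\ell - \epsilon$, this forces $\dhat{w_n, w_{2n}} \le \dhat{1, w_{2n}}$, that is, $w_{2n} \in H(1, w_n)$, with asymptotic probability one, as required.

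The argument is essentially routine once linear progress is in hand; I do not expect any real obstacle. The only mild subtlety is that the two estimates used in the final step concern quantities that are not independent (both involve the first $n$ increments through $w_{2n}$), but since each holds individually with probability tending to one, so does their intersection.
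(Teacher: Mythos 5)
Your proof is correct, but it takes a genuinely different route from the paper's. You reduce the halfspace condition to the length comparison $\dhat{w_n, w_{2n}} \le \dhat{1, w_{2n}}$ and settle it by a drift argument: linear progress gives $\dhat{1, w_{2n}} \approx 2\ell n$ while the second half of the walk, being $\mun{n}$-distributed, has $\dhat{1, w_n^{-1}w_{2n}} \approx \ell n$, and the union bound on two high-probability events (correctly noted as not independent, but that is irrelevant) finishes it. The paper instead conditions on $w_n = g$, writes $\P(w_{2n} \in H(1,w_n)) = \sum_g \mun{n}(g)\,\mun{n}(H(g^{-1},1))$, bounds $\mun{n}(H(g^{-1},1)) \ge 1 - \mun{n}(H(1,g^{-1}))$, and invokes the exponential decay of $\mun{n}$-measure of distant halfspaces (Lemma \ref{lemma:mu-n-decay}) together with the escape of $w_n$ to infinity. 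Both arguments lean on results imported from \cite{maher2} under the same hypotheses (finite support, non-elementary), so neither is circular. Your version is shorter and arguably more transparent, since it uses only the positivity of the speed $\ell$; the paper's version is more in the spirit of the halfspace/decay machinery that the rest of Section \ref{section:independence} is built on, and its quantitative form (a lower bound of roughly $1 - QL^r$ once the walk leaves the ball of radius $r$) meshes directly with the $N_r$-neighbourhood estimates used immediately afterwards. One small presentational caveat: Theorem \ref{theorem:linear} is only stated later in the paper, so if you wanted to slot your argument in here you would need to cite \cite{maher2} for linear progress at this point.
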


\begin{proof}
If $w_n = g$, then the probability that after another $n$ steps
the sample path lies in $H(1,g)$ is $\mun{n}(g^{-1}H(1,g)) =
\mun{n}(H(g^{-1},1))$. Summing over all $g \in G$ gives
\begin{align}
\P(w_{2n} \in H(1,w_n)) & = \sum_{g \in G} \mun{n}(g)
\mun{n}(H(g^{-1},1)). \label{eq:halfspace}
\intertext{%
The relative space $\Grel$ is the union of $H(1, g^{-1})$ and
$H(g^{-1}, 1)$, so $\mun{n}(H(g^{-1},1)) \ge 1 - \mun{n}( H(1,g^{-1}) )$,
which implies%
}
\P(w_{2n} \in H(1,w_n)) & \ge \sum_{g \in G} \mun{n}(g) ( 1 -
\mun{n}(H(1, g^{-1})) ). \notag
\intertext{%
As all terms are positive, and we wish to find a lower bound, we may
discard those terms corresponding to $g \in \Bhat_r$, where
$\Bhat_r$ is the ball of radius $r$ about the identity in the
relative space.%
}
\P(w_{2n} \in H(1,w_n)) & \ge \sum_{g \in G \setminus \Bhat_r} \mun{n}(g) ( 1 - \mun{n}(H(1,
g^{-1})) ) \notag
\intertext{%
For any $r \ge \Kexp$, we may apply the exponential decay estimate for
$\mun{n}$, Lemma \ref{lemma:mu-n-decay}.%
}
\P(w_{2n} \in H(1,w_n)) & \ge \mun{n}(G \setminus \Bhat_r) ( 1 - QL^r
) \notag
\intertext{%
The convolution measures $\mun{n}$ converge to $\nu$, and $\nu(G
\setminus \Bhat_r) = 1$, which implies%
}
\lim_{n \to \infty} \P(w_{2n} \in H(1,w_n)) & \ge 1 - QL^r \notag
\end{align}
for all $r \ge \Kexp$, and so the claim follows as $L < 1$.
\end{proof}

We now observe that we can make a similar statement, using the
reflected random walk.

\begin{claim} \label{claim:rwalk in halfspace}
The probability that $w_{2n}^{-1}$ lies in $H(1, w_{2n}^{-1} w_n )$ tends to
one as $n$ tends to infinity.
\end{claim}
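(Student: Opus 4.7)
The plan is to reduce this claim directly to Claim \ref{claim:walk in halfspace} applied to the reflected random walk. Write $w_{2n} = a_1 a_2 \cdots a_{2n}$ with $a_i$ i.i.d.\ $\mu$-distributed, so that $w_{2n}^{-1} = a_{2n}^{-1} a_{2n-1}^{-1} \cdots a_1^{-1}$. Set $b_i = a_{2n+1-i}^{-1}$ for $1 \le i \le 2n$. Since the $a_i$ are i.i.d.\ $\mu$-distributed, the inverted and reindexed variables $b_i$ are i.i.d.\ $\rmu$-distributed. Letting $\widetilde w_k = b_1 b_2 \cdots b_k$, a direct computation gives
\[ \widetilde w_n = b_1 \cdots b_n = a_{2n}^{-1} \cdots a_{n+1}^{-1} = w_{2n}^{-1} w_n, \qquad \widetilde w_{2n} = w_{2n}^{-1}. \]
Therefore the event $\{ w_{2n}^{-1} \in H(1, w_{2n}^{-1} w_n) \}$ is, sample-path-by-sample-path, the same as the event $\{ \widetilde w_{2n} \in H(1, \widetilde w_n) \}$.

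Next I would observe that the joint distribution of $(\widetilde w_n, \widetilde w_{2n})$ coincides with the joint distribution of the $n$-th and $2n$-th positions of a random walk driven by $\rmu$. Since $\mu$ is finitely supported, so is $\rmu$; and since the support of $\rmu$ is the inverse of the support of $\mu$, it generates the same subgroup as the support of $\mu$, which is non-elementary. Thus $\rmu$ satisfies the hypotheses of Claim \ref{claim:walk in halfspace}.

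Finally, I would apply Claim \ref{claim:walk in halfspace} to the reflected random walk. Its proof uses only the exponential-decay estimate of Lemma \ref{lemma:mu-n-decay}, which holds for any finitely supported measure whose support generates a non-elementary subgroup, and hence holds for $\rmun{n}$. Applying that claim to the reflected walk gives
\[ \widetilde \P\bigl( \widetilde w_{2n} \in H(1, \widetilde w_n) \bigr) \to 1 \quad \text{as } n \to \infty, \]
which by the identification of events above is exactly
\[ \P\bigl( w_{2n}^{-1} \in H(1, w_{2n}^{-1} w_n) \bigr) \to 1, \]
as required. There is essentially no hard step here; the content of the claim is the bookkeeping observation that the midpoint-of-the-inverse-path $w_{2n}^{-1} w_n$ plays for the reflected walk the same role that $w_n$ plays for $w_{2n}$, so the previous claim transfers with no further work.
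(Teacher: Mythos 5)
Your proof is correct and takes essentially the same route as the paper: both reduce the claim to Claim \ref{claim:walk in halfspace} for the reflected walk by observing that the pair $(w_{2n}^{-1}w_n,\, w_{2n}^{-1})$ is distributed exactly as the $n$-th and $2n$-th positions of a $\rmu$-walk (the paper phrases this by writing out the conditional sum $\sum_g \rmun{n}(g)\rmun{n}(H(g^{-1},1))$ and noting it matches \eqref{eq:halfspace} with $\rmu$ in place of $\mu$, while you make the same identification via explicit reindexing of the increments). Your check that the hypotheses transfer to $\rmu$ is fine; note only that the earlier argument also uses $\rmun{n}(G\setminus\Bhat_r)\to 1$, which likewise holds for the reflected walk.
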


\begin{proof}
This follows immediately from the argument above, as if
$w_{2n}^{-1} w_n = g$, then the probability that
$w_{2n}^{-1}$ lies in $H(1,g)$ is $\rmun{n}(g^{-1}H(1,g)) =
\rmun{n}(H(g^{-1},1))$. Again, summing over $g$ we obtain
\[ \P(w_{2n}^{-1} \in H(1,w_{2n}^{-1} w_n ))
= \sum_{g \in G} \rmun{n}(g) \rmun{n}(H(g^{-1},1)) \]
which is precisely the same as \eqref{eq:halfspace}, except with the reflected
measure $\rmu$ instead of $\mu$.
\end{proof}

The two events described in Claims \ref{claim:walk in halfspace} and
\ref{claim:rwalk in halfspace} need not be independent, but as they
both happen with probabilities that tend to one, the probability that
they both occur tends to one.  Furthermore, for any subset of $\gz$
with non-zero measure, the probability that both these events occur in
this subset tends to one as $n$ tends to infinity.

We now find upper and lower bounds for $X_{2n}$ in terms of a
distribution $Y_n$, which we now define.  Consider the sequence of
random variables $w \mapsto (w_n, w_{2m}w_n^{-1})$. We shall write
$Y_n$ for the corresponding measures on $\Gbar \cross \Gbar$, defined
by $Y_n(U) = \P( (w_n,w_{2n}^{-1}w_n) \in U )$. If we write this in
terms of the increments of the random walk, then $(w_n, w_{2n}^{-1}
w_n)$ is given by $(a_1 \ldots a_n, a_{2n}^{-1} \ldots a_{n+1}^{-1})$.
Here $a_i$ is the $i$-th increment of the random walk, so the $a_i$
are distributed as independent random variables with distribution
$\mu$.  Therefore, the distribution of $Y_n$ in each factor is
independent, so the distribution $Y_n$ is equal to $\mun{n} \cross
\rmun{n}$ on $\Gbar \cross \Gbar$.  Therefore the sequence $Y_n$
converges in distribution to $\nu \cross \rnu$.

Let $T_1$ and $T_2$ be two closed sets in the Gromov boundary
$\fmin$. The distribution $Y_n$ is equal to $\mun{n} \cross \rmun{n}$,
so $Y_n(T_1 \cross T_2) = \mun{n}(T_1) \rmun{n}(T_2)$. Recall that
$N_r(T)$ is the union of all halfspace neighbourhoods $H(1,x)$, with
$\nhat{x} \ge r$, whose closures intersect $T$.  We will write
$N^2_r(X)$ for $N_r(N_r(T))$, and so on.

As sample paths converge to the boundary almost surely, Theorem
\ref{theorem:converge}, the probability that $\nhat{w_n} \ge r$ tends
to one as $n$ tends to infinity. Therefore, the probability that all
of the following three events occur at the same time tends to one as
$n$ tends to infinity.
\begin{align*}
& \nhat{w_n} \ge r \\
& w_{2n} \in H(1,w_n) \tag{Claim \ref{claim:walk in halfspace}} \\
& w_{2n}^{-1} \in H(1,w_{2n}^{-1} w_n ) \tag{Claim \ref{claim:rwalk in halfspace}}
\end{align*}

If the first two events above occur, and $w_n \in N_r(T_1)$, then
$w_{2n} \in N^2_r(T_1)$.  Similarly, if the first and last events
above occur and $w_{2n}^{-1} w_n(\w) \in N_r(T_2)$, then $w_{2n}^{-1}
\in N^2_r(T_2)$. Therefore
\begin{align*}
\lim_{n \to \infty} Y_n(N_r(T_1) \cross N_r(T_2)) \le &
\lim_{n \to \infty} X_{2n}(N^2_r(T_1) \cross N^2_r(T_2)).
\intertext{%
Furthermore, if the first two events above occur, and $w_{2n}$
lies in $N^2_r(T_1)$, then $H(1,w_n)$ intersects $N^2_r(T_1)$, so
$w_n$ lies in $N^3_r(T_1)$. Similarly, if the first and last
events above occur, and $w_{2n}^{-1}$ lies in $N^2_r(T_2)$, then
$H(1,w_{2n}^{-1} w_n)$ intersects $N^2_r(T_2)$, so $w_{2n}^{-1} w_n$
lies in $N^3_r(T_2)$. Therefore
}
& \lim_{n \to \infty} X_{2n}(N^2_r(T_1) \cross N^2_r(T_2))
\le \lim_{n \to \infty} Y_n(N^3_r(T_1) \cross N^3_r(T_2)).
\intertext{%
The distribution $Y_n$ is given by $\mun{n} \cross \rmun{n}$,
which has limit $\nu \cross \nu$, so we have shown
}
\nu(N_r(T_1))\rnu(N_r(T_2)) \le & \lim_{n \to \infty}
X_{2n}(N^2_r(T_1) \cross N^2_r(T_2)) \le
\nu(N^3_r(T_1))\rnu(N^3_r(T_2)),
\end{align*}
and this is true for all $r$.  Recall that we showed in Lemma
\ref{lemma:neighbourhood} that $N^2_r(T) \subset N_{r-\Knbd}(T)$, for
some constant $\Knbd$, which only depends on $\delta$, and furthermore
this implies that $N^3_r(T) \subset N_{r - 2\Knbd}(T)$. In Lemma
\ref{lemma:intersections} we showed that $\bigcap N_r(T) = T$, and so
this implies that $\lim_{n \to \infty} X_{2n} = \nu \cross \rnu$, as
required.
\end{proof}

We have shown that the limiting distribution of pairs $(w_n,
w_n^{-1})$ is given by $\nu \cross \rnu$. We now use this to show that
the limiting distribution of stable and unstable endpoints of \pA
elements, $(\l^+(w_n), \l^-(w_n))$, is also given by $\nu \cross
\rnu$. Consider the sequence of random variables taking values in
$\Gbar \cross \Gbar \cup \varnothing$ given by $w \mapsto (\l^+(w_n),
\l^-(w_n))$, if $w_n$ is \pA, and $w \mapsto \varnothing$ otherwise.
We shall write $\L_n$ for the corresponding sequence of measures on
$\Gbar \cross \Gbar$, given by $\L_n(U) = \P( (\l^+(w_n), \l^-(w_n) )
\in U)$. The measures $\L_n$ need not have total mass one.  However,
elements of the mapping class group which are not \pA are all
conjugate to elements of bounded relative length, and so Theorem
\ref{theorem:pa} implies that the probability that $w_n$ is \pA tends
to one as $n$ tends to infinity.  Therefore the limiting distribution
$\lim_{n \to \infty} \L_n$ will have total mass one, and so will be a
probability measure on $\Gbar \cross \Gbar$.

\begin{lemma}
The induced measures $\L_n$ converge in distribution to the product
of the harmonic measure and the reflected harmonic measure, i.e.
\[ \lim_{n \to \infty} \L_n = \nu \cross \rnu. \]
\end{lemma}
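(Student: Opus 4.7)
The plan is to leverage Theorem \ref{theorem:independence}, which already gives convergence in distribution of $(w_n, w_n^{-1})$ to $\nu \cross \rnu$, and to sandwich $\L_n$ between $X_n$-measures of halfspace neighbourhoods by means of Proposition \ref{prop:fixed point}. First I would collect a package of ``good events'' whose probability tends to one as $n \to \infty$: $w_n$ is \pA with translation length at least $\Kpa$ (by Theorem \ref{theorem:pa} combined with the linear growth of relative translation length from \cite{maher2}), and $\nhat{w_n} \ge r$, $\nhat{w_n^{-1}} \ge r$ for any fixed $r$ (from Theorem \ref{theorem:converge} applied to $\mu$ and to $\rmu$, which forces $\nhat{w_n} \to \infty$ almost surely in both walks). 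On these events, Proposition \ref{prop:fixed point} applied with basepoint $x = 1$ gives $\l^+(w_n) \in \overline{H(1, w_n)}$ and $\l^-(w_n) \in \overline{H(1, w_n^{-1})}$.

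For the upper bound, fix closed sets $T_1, T_2 \subseteq \fmin$. If $\l^+(w_n) \in T_1$ and the good events hold, then $\overline{H(1, w_n)} \cap T_1 \ne \varnothing$ (since $\l^+(w_n)$ lies in both), and $\nhat{w_n} \ge r$ places $w_n$ itself in $N_r(T_1)$; likewise $w_n^{-1} \in N_r(T_2)$. Hence
\[ \L_n(T_1 \cross T_2) \le X_n(N_r(T_1) \cross N_r(T_2)) + o(1). \]
Taking $n \to \infty$ and invoking Theorem \ref{theorem:independence} on the closed set $N_r(T_1) \cross N_r(T_2)$ yields $\limsup \L_n(T_1 \cross T_2) \le \nu(N_r(T_1))\,\rnu(N_r(T_2))$, and then $r \to \infty$ with Lemma \ref{lemma:intersections} collapses $\nu(N_r(T_i)) \to \nu(T_i)$ and similarly for $\rnu$, giving $\limsup \L_n(T_1 \cross T_2) \le \nu(T_1)\,\rnu(T_2)$.

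For the matching lower bound, I would use the same halfspace-doubling argument that appears in the proof of Theorem \ref{theorem:independence}. If on the good events $w_n \in N_r(T_1)$ is witnessed by a halfspace $H(1, x)$ with $\nhat{x} \ge r$ and $\overline{H(1,x)} \cap T_1 \ne \varnothing$, then $w_n$ lies in $\overline{H(1, x)}$, so the halfspace $\overline{H(1, w_n)}$ (which contains $\l^+(w_n)$) meets $\overline{H(1, x)}$ and hence witnesses $\l^+(w_n) \in N_r^2(T_1) \subseteq N_{r - \Knbd}(T_1)$ by Lemma \ref{lemma:neighbourhood}. The symmetric statement for $\l^-(w_n)$ gives
\[ X_n(N_r(T_1) \cross N_r(T_2)) \le \L_n(N_{r-\Knbd}(T_1) \cross N_{r-\Knbd}(T_2)) + o(1), \]
and letting $n \to \infty$ followed by $r \to \infty$, together with $\bigcap_r N_{r-\Knbd}(T_i) = T_i$, closes the sandwich.

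The main obstacle is the lower-bound inclusion: one must verify carefully that the halfspace through which $w_n$ enters $N_r(T_1)$ really does capture $\l^+(w_n)$ in a comparably-sized halfspace neighbourhood, i.e.\ that the good events are strong enough to execute the halfspace-nesting step without loss. This is essentially a bookkeeping exercise with Propositions \ref{prop:npphalf} and \ref{prop:nested} and the doubled-halfspace estimate of Lemma \ref{lemma:neighbourhood}, but the geometric role of the sample-path continuation $w_{2n}$ in the proof of Theorem \ref{theorem:independence} is here played by the boundary fixed points $\l^\pm(w_n)$, whose position inside $\overline{H(1, w_n^{\pm 1})}$ is supplied by Proposition \ref{prop:fixed point} precisely because of the linear translation-length lower bound from \cite{maher2}.
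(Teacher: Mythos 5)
Your proposal is correct and follows essentially the same route as the paper: it sandwiches $\L_n$ between $X_n$-measures of halfspace neighbourhoods, using Proposition \ref{prop:fixed point} (on the good events of large relative length and translation length) to place $\l^{\pm}(w_n)$ in $\overline{H(1,w_n^{\pm 1})}$, and then invokes Theorem \ref{theorem:independence} together with Lemmas \ref{lemma:intersections} and \ref{lemma:neighbourhood} to let $r \to \infty$. The only difference is bookkeeping: the paper sandwiches $\L_n(N^2_r(T_1) \cross N^2_r(T_2))$ between $X_n$ of the $N_r$- and $N^3_r$-neighbourhoods, whereas you bound $\L_n(T_1 \cross T_2)$ from above and $\L_n(N_{r-\Knbd}(T_1) \cross N_{r-\Knbd}(T_2))$ from below, which is the same pair of inclusions shifted by one application of $N_r$.
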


\begin{proof}
Let $T_1$ and $T_2$ be closed sets in $\fmin$. It suffices to show
that $\lim_{n \to \infty}\L_n(T_1 \cross T_2) = \nu(T_1)\rnu(T_2)$.
We shall define the \emph{(relative) translation length}, denoted
$\t_g$, of an element $g$ of the mapping class group, to be the
translation length of $g$ acting on the relative space $\Grel$. Recall
that by \pref{fixed point}, there is a constant $\Kpa$, which only
depends on $\delta$ such that if $\t_g \ge \Kpa$, then the stable
fixed point $\l^+(g)$ is contained in $\Hbar{1, g}$.  This implies
that if $\dhat{1, w_n} \ge r$, and $\t_{w_n} \ge \Kpa$, and also $w_n$
lies in $N_r(T_1)$ and $w_n^{-1}$ lies in $N_r(T_2)$, then $\l^+(w_n)$
lies in $N^2_r(T_1)$ and $\l^-(w_n)$ lies in $N^2_r(T_2)$.  As random
walks converge to the boundary, by Theorem \ref{theorem:converge}, the
probability that $\nhat{w_n} \ge r$ tends to one as $n$ tends to
infinity, and by Theorem \ref{theorem:pa}, the probability that the
relative translation length of $w_n$ is greater than $\Kpa$ tends to
one as $n$ tends to infinity. Therefore this implies that for $r \ge \Kpa$,
\begin{align*}
\lim_{n \to \infty} X_n(N_r(T_1) \cross N_r(T_2) ) & \le \lim_{n \to
  \infty} \L_n(N^2_r(T_1) \cross N^2_r(T_2)).
\intertext{%
Similarly, if $\dhat{1, w_n} \ge r$ and $\t_{w_n} \ge \Kpa$, then if
$\l^+(w_n)$ lies in $N^2_r(T_1)$, and $\l^-(w_n)$ lies in
$N^2_r(T_2)$, then $w_n$ lies in $N^3_r(T_1)$, and $w_n^{-1}(\w)$ lies
in $N^3_r(T_2)$. As the probabilities that $\dhat{1, w_n} \ge r$ and
$\t_{w_n} \ge \Kpa$ both tend to one as $n$ tends to infinity, this
implies that for $r \ge \Kpa$,}
\lim_{n \to \infty} X_n(N_r(T_1) \cross N_r(T_2) ) & \le \lim_{n \to
  \infty} \L_n(N^2_r(T_1) \cross N^2_r(T_2)) \le \lim_{n \to \infty}
X_n(N^3_r(T_1) \cross N^3_r(T_2) ).
\end{align*}
Since $X_n$ limits to $\nu \cross \rnu$, using exactly the same
argument as in the previous proof, it follows that $\L_n$ limits to
$\nu \cross \rnu$.
\end{proof}

\section{Distances of random Heegaard splittings} \label{section:distance}

A random Heegaard splitting $M(w_n)$ is a $3$-manifold constructed
from a Heegaard splitting using a gluing map $w_n$ which is the
position of a random walk of length $n$ on the mapping class group.
Recall that the distance of a Heegaard splitting is the minimal
distance in the complex of curves between the disc sets for each
handlebody. If the disc set for one handlebody is $\ds$, then the disc
set for the other handlebody is $w_n\ds$, so the splitting distance is
the minimum distance between these two translates of the disc set.
The complex of curves is quasi-isometric to the relative space, so it
suffices to show that the distance in the relative space between the
images of the discs sets in the relative space grows linearly.

Before we begin, we record the elementary observation that as $\nu$
and $\rnu$ are non-atomic, the diagonal in $\fmin \cross \fmin$ has
measure zero in the product measure $\nu \cross \rnu$.

\begin{proposition}\label{prop:diagonal}
Let $\nu$ and $\rnu$ be non-atomic measures on $\fmin \cross
\fmin$. Then $(\nu \cross \rnu)(\D) = 0$, where $\D$ is the diagonal
$\{ (\l, \l) \mid \l \in \fmin \}$.
\end{proposition}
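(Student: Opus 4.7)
The plan is to reduce the problem to the non-atomicity hypothesis on one factor via Fubini's theorem. The key preliminary observation is measurability of the diagonal: since $\fmin$ is Hausdorff, the diagonal $\D = \{(\l,\l) : \l \in \fmin\}$ is closed in $\fmin \cross \fmin$ with the product topology, and hence Borel. Because $\fmin$ is second countable (as a subspace of the finite-dimensional space $\PMF$), the Borel $\sigma$-algebra on the product agrees with the product of the Borel $\sigma$-algebras on the factors, so $\D$ lies in the $\sigma$-algebra on which the product measure $\nu \cross \rnu$ is defined.

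With measurability in hand, I would apply Fubini's theorem directly. The $\l$-slice $\D_\l = \{\mu \in \fmin : (\l,\mu) \in \D\}$ is the singleton $\{\l\}$, so
\[ (\nu \cross \rnu)(\D) = \int_\fmin \rnu(\{\l\}) \, d\nu(\l). \]
By hypothesis $\rnu$ is non-atomic, so $\rnu(\{\l\}) = 0$ for every $\l \in \fmin$, and the integrand vanishes identically, giving $(\nu \cross \rnu)(\D) = 0$. The proof is essentially a one-line consequence of Fubini once measurability is noted, and there is no serious obstacle; the only thing worth remarking on is that the role of $\nu$ and $\rnu$ could be swapped, so the hypothesis could equally well be weakened to requiring that just one of the two measures be non-atomic.
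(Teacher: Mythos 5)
Your argument is correct, but it takes a genuinely different route from the paper. You integrate out one variable: after checking that $\D$ is product-measurable, Fubini reduces the claim to the vanishing of $\rnu$ on singletons, so only the non-atomicity of $\rnu$ (or, by symmetry, of $\nu$) is used. The paper instead avoids Fubini entirely with a covering argument: by non-atomicity of $\nu$ one partitions $\fmin$ into sets $T_i$ with $\nu(T_i) \le \e$, observes that $\D \subset \bigcup_i T_i \cross T_i$, and uses subadditivity to get $(\nu \cross \rnu)(\D) \le \sum_i \nu(T_i)\rnu(T_i) \le \e \sum_i \rnu(T_i) = \e$; this, too, uses non-atomicity of only one factor, so your closing remark applies to both proofs. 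The trade-offs: your proof is shorter once Fubini is available, but it genuinely needs $\D$ to lie in the product $\sigma$-algebra, which is where your appeal to second countability does real work --- note that $\fmin$ is a quotient of a subspace of $\PMF$ rather than a subspace, so the cleanest justification is that the Gromov boundary of a separable hyperbolic space is separable and metrizable, hence second countable. The paper's covering argument only needs $\D$ to be covered by measurable rectangles, so it bounds the outer measure of $\D$ and sidesteps the measurability question; on the other hand it implicitly invokes the standard Sierpi\'nski-type fact that a non-atomic measure admits partitions into sets of arbitrarily small measure. Both arguments are elementary and both establish the result.
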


\begin{proof}
The measure $\nu$ is non-atomic, so for any $\e > 0$, we may partition
$\fmin$ into sets $T_i$ such that $\nu(T_i) \le \e$, for all $i$. The
diagonal in $\fmin \cross \fmin$ is covered by the union of $T_i
\cross T_i$. As
\[ (\nu \cross \rnu)(T_i \cross T_i)  = \nu(T_i)\rnu(T_i) \le \e \rnu(T_i), \]
this implies that $(\nu \cross \rnu)(\D) \le \e$, for all $\e > 0$.
\end{proof}

We will make use of the fact that a random walk makes linear progress
in the relative space.

\begin{theorem}\cite{maher2} \label{theorem:linear}
Let $G$ be the mapping class group of an orientable surface of finite
type, which is not a sphere with three or fewer punctures, and
consider the random walk generated by a finitely supported probability
distribution $\mu$, whose support generates a non-elementary subgroup
of the mapping class group. Then there is a constant $\ell > 0$ such
that $\lim_{n \to \infty} \tfrac{1}{n}\nhat{w_n} = \ell$ almost
surely.
\end{theorem}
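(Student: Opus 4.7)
The strategy is to prove existence of the limit using Kingman's subadditive ergodic theorem, and then deduce positivity from the exponential decay of harmonic measure on halfspaces (Lemma \ref{lemma:exponential}) combined with the almost-sure convergence of sample paths to the Gromov boundary (Theorem \ref{theorem:converge}).

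\textbf{Existence.} Set $F_n(w) = \nhat{w_n}$. The triangle inequality in the relative metric gives
\[ \nhat{w_{n+m}} \le \nhat{w_n} + \nhat{w_n^{-1} w_{n+m}}, \]
and since the increment $w_n^{-1}w_{n+m} = a_{n+1}\cdots a_{n+m}$ depends only on the Bernoulli-shifted sequence of increments, writing $\sigma$ for the shift we obtain $F_{n+m}(w) \le F_n(w) + F_m(\sigma^n w)$, a subadditive cocycle. Finite support of $\mu$ implies $E[F_1] < \infty$, and $\sigma$ is ergodic with respect to $\P$. Kingman's subadditive ergodic theorem then yields $F_n/n \to \ell$ almost surely for a constant $\ell = \inf_n E[F_n]/n \ge 0$, establishing existence of the limit.

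\textbf{Positivity.} To show $\ell > 0$, I would use the uniform exponential bound $\mun{n}(H(1,x)) \le Q L^{\nhat{x}}$ from Lemma \ref{lemma:exponential}. The heuristic is that the harmonic measure $\nu$ is concentrated deep inside every halfspace pointing toward the boundary, so the random walk must make linear progress in $\nhat{\ \cdot\ }$ in order to realize its a.s. boundary limit. Concretely, given the boundary limit $\lambda(w)$ of a sample path, I would study the Gromov product $(w_n \mid \lambda)$ as a proxy for $\nhat{w_n}$. Using $\mu$-stationarity,
\[ \nu(\overline{H(1, x)}) = \sum_g \mu^{*n}(g)\, \nu(g^{-1}\overline{H(1, x)}), \]
and combining the exponential upper bound above with a matching shadow-type lower bound $\nu(\overline{H(1, w_n)}) \gtrsim c^{\nhat{w_n}}$ (which holds for a $\nu$-typical $\lambda \in \overline{H(1,w_n)}$), one forces $\nhat{w_n}$ to grow at least linearly in $n$, almost surely. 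Equivalently, one obtains a large-deviation estimate $\P(\nhat{w_n} \le cn) \to 0$ exponentially, for some $c > 0$, which excludes $\ell = 0$.

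\textbf{Main obstacle.} The existence half is a routine cocycle computation; the real content is positivity. Because the relative space is \emph{not proper}, the usual compactness tools for establishing positive drift on proper hyperbolic spaces are unavailable, and one must substitute them with the uniform quantitative estimate of Lemma \ref{lemma:exponential}. The delicate step is producing the shadow-type lower bound on $\nu(\overline{H(1,x)})$ to match the upper bound, which relies essentially on the finite support of $\mu$ and the non-elementarity of the semigroup it generates.
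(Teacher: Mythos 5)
First, note that the paper does not prove Theorem \ref{theorem:linear} at all: it is imported wholesale from \cite{maher2}, so there is no internal proof to compare against, and your proposal has to be judged against the argument of that reference. Your existence step is correct and is exactly the standard route: $F_n(w) = \nhat{w_n}$ is a subadditive cocycle over the Bernoulli shift on increments, $E[F_1] < \infty$ by finite support, and Kingman gives $F_n/n \to \ell = \lim_n E[F_n]/n \ge 0$ almost surely. You are also right that all the content is in showing $\ell > 0$.

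The positivity step as you propose it has a genuine gap. The mechanism you suggest --- a matching lower bound $\nu(\Hbar{1,x}) \gtrsim c^{\nhat{x}}$ for ($\nu$-typical) halfspaces, played off against the upper bound of Lemma \ref{lemma:exponential} --- is not available here and is essentially circular: lower bounds of that type (Guivarc'h-type dimension/entropy/drift comparisons) are normally \emph{derived from} positive drift together with large-deviation control of $\nhat{w_n}$, not used to prove it; moreover no such lower bound holds for arbitrary $x$ (the upper bound in Lemma \ref{lemma:exponential} is uniform in $x$, but $\nu(\Hbar{1,x})$ can be far smaller than $L^{\nhat{x}}$, e.g. when $\mu$ is asymmetric and $H(1,x)$ barely meets the limit set of the semigroup $H^+$). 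The argument in \cite{maher2} needs only the \emph{upper} bound $\mun{m}(H(1,x)) \le Q L^{\nhat{x}}$, uniform in $m$, and uses it to control backtracking rather than to estimate shadows from below: writing $w_{(k+1)m} = w_{km} u$ with $u$ independent of $w_{km}$ and $\mun{m}$-distributed, one has $\nhat{w_{(k+1)m}} \ge \nhat{w_{km}} + \nhat{u} - 2\gp{w_{km}^{-1}}{u}$, and the event $\gp{w_{km}^{-1}}{u} \ge t$ forces $u$ into a halfspace at relative distance comparable to $t$ from $1$, so the uniform exponential decay bounds $E[\gp{w_{km}^{-1}}{u}]$ by a constant $C$ independent of $m$ and of $w_{km}$. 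Convergence to the boundary (Theorem \ref{theorem:converge}) supplies an $m$ with $E[\nhat{w_m}] > 2C + 1$, whence $E[\nhat{w_{km}}] \ge k$ and $\ell = \lim E[F_n]/n \ge 1/m > 0$. So the correct repair is to replace your shadow lower bound by this uniform backtracking estimate; as written, your proof of positivity does not go through.
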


We use the result above, together with the estimate for $\t_{w_n}$ in
terms of $\dhat{1, w_n}$ and the distance to a quasi-axis for $w_n$,
from \pref{translation}, to show that the relative translation length
$\t_{w_n}$ of $w_n$ grows linearly in $n$.

\begin{lemma} \label{lemma:linear translation length}
Let $G$ be the mapping class group of an orientable surface of finite
type, which is not a sphere with three or fewer punctures. Consider a
random walk generated by a finitely supported probability distribution
$\mu$ on $G$, whose support generates a non-elementary subgroup.  Then
there is a constant $\ell > 0$ such that
\[ \P( \norm{\tfrac{1}{n}\t_{w_n} - \ell} \le \e ) \to 1 \text{ as }
n \to \infty, \] 
for all $\e > 0$, where $\t_{w_n}$ is the translation length of the
group element $w_n$ acting on the relative space.
\end{lemma}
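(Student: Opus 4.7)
The plan is to use \pref{translation}, which gives
\[ |\t_{w_n} - \dhat{1, w_n} + 2 \dhat{1, \a_n}| \le \Ktrans \]
for a suitable quasi-axis $\a_n$ of $w_n$, and to combine this with Theorem \ref{theorem:linear} by showing that the ``sag'' $\dhat{1, \a_n}$ of the quasi-axis from the basepoint is typically bounded.

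The first step is to produce a quasi-axis with a uniform quasi-geodesic constant. When $w_n$ is \pA (which happens with probability tending to one by Theorem \ref{theorem:pa}), I would take $\a_n$ to be a $\Ksplit$-\qg from $\l^-(w_n)$ to $\l^+(w_n)$, which exists by Theorem \ref{theorem:uniform}. Since $w_n$ fixes both endpoints in $\fmin$, the translate $w_n \a_n$ is another $\Ksplit$-\qg with the same endpoints and so is Hausdorff-close to $\a_n$, making $\a_n$ a $\Ksplit$-quasi-axis. Applying \pref{translation} with $K = \Ksplit$ then gives a constant $\Ktrans$ depending only on $\delta$ such that the displayed inequality above holds whenever $\t_{w_n} \ge \Ktrans$.

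The second step, and the main obstacle, is to bound the sag $\dhat{1, \a_n}$ in probability. Up to a constant depending only on $\Ksplit$ and $\delta$, the distance $\dhat{1, \a_n}$ agrees with the Gromov product $\gp{\l^+(w_n)}{\l^-(w_n)}$. Let $A_R = \{(\a, \b) \in \fmin \cross \fmin : \gp{\a}{\b} \ge R\}$. These sets are closed, decreasing in $R$, and any pair in $\bigcap_R A_R$ satisfies $\gp{\a}{\b} = \infty$, which by the definition of the extended Gromov product forces $\a = \b$; hence $\bigcap_R A_R \subseteq \D$. By \pref{diagonal}, $(\nu \cross \rnu)(\D) = 0$, so continuity of measure gives $(\nu \cross \rnu)(A_R) \to 0$ as $R \to \infty$. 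The lemma immediately preceding this one shows $\L_n \to \nu \cross \rnu$ in distribution, so Portmanteau gives $\limsup_n \L_n(A_R) \le (\nu \cross \rnu)(A_R)$. Combined with Theorem \ref{theorem:pa}, this yields: for every $\e > 0$ there exist $R$ and $N$ with $\P(\dhat{1, \a_n} \ge R) < \e$ for all $n \ge N$.

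In the final step I would combine these with Theorem \ref{theorem:linear}, which gives $\dhat{1, w_n}/n \to \ell$ almost surely, and hence in probability. On the intersection of the events $\{|\tfrac{1}{n}\dhat{1, w_n} - \ell| \le \e/2\}$, $\{\dhat{1, \a_n} \le R\}$ and $\{w_n \text{ is \pA}\}$, each of probability tending to one, the estimate from step one gives $|\tfrac{1}{n}\t_{w_n} - \ell| \le \e$ for $n$ large enough that $(\Ktrans + 2R)/n \le \e/2$; this choice also forces $\t_{w_n} \ge \Ktrans$, justifying the application of \pref{translation}. This yields the stated convergence in probability, with the same constant $\ell$ as in Theorem \ref{theorem:linear}.
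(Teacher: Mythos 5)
Your overall skeleton is the same as the paper's: combine the estimate of \pref{translation} with the linear progress of Theorem \ref{theorem:linear}, and control the remaining term $\dhat{1,\a_n}$ in probability using the convergence $\L_n \to \nu \cross \rnu$ together with the fact that the diagonal is $(\nu \cross \rnu)$-null. Where you differ is in how that control is implemented. The paper does not use the Gromov product here; it introduces the sets $U_r$ built from pairs of disjoint halfspaces $P$ with $D(P) \le r$, shows via \pref{disjoint} that $\liminf_r U_r = \fmin \cross \fmin \setminus \D$, and deduces $(\nu \cross \rnu)(U_r) \to 1$. This is tailored to the fact that the convergence $\L_n \to \nu \cross \rnu$ is only actually established through sandwich estimates on halfspace neighbourhoods $N_r(T)$ of closed sets. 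Your route through $A_R = \{ \gp{\a}{\b} \ge R \}$ and the Portmanteau theorem is shorter, but it leans on two points you should make explicit: first, the extended Gromov product is only continuous up to $2\delta$, so $A_R$ need not be closed; you should pass to $\overline{A_R} \subseteq A_{R - 2\delta}$, which still satisfies $\bigcap_R \overline{A_R} \subseteq \D$. Second, the comparison between $\dhat{1,\a_n}$ and $\gp{\l^+(w_n)}{\l^-(w_n)}$ for the boundary endpoints of a $\Ksplit$-\qg is standard but nowhere proved in the paper. Both are fixable, and this part of your argument is sound.

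The one genuine logical gap is at the end: the claim that ``this choice also forces $\t_{w_n} \ge \Ktrans$, justifying the application of \pref{translation}'' is circular. The lower bound $\t_{w_n} \ge \dhat{1,w_n} - 2\dhat{1,\a_n} - \Ktrans$ is itself the conclusion of \pref{translation}, which you may only invoke once you already know $\t_{w_n} \ge \Ktrans$; and being \pA gives $\t_{w_n} > 0$ but no quantitative lower bound. The paper closes this by observing that in a $\delta$-hyperbolic space any $g$ with $\t_g \le \Ktrans$ is conjugate to an element of relative length bounded in terms of $\Ktrans$ and $\delta$, so Theorem \ref{theorem:pa} gives $\P(\t_{w_n} \ge \Ktrans) \to 1$ directly. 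You need to add this as a separate high-probability event in your final intersection, rather than deriving it from the estimate you are trying to apply.
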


\begin{proof}
Given a pair of halfspaces $P = \{ H(1, x_1), H(1, x_2) \}$, with disjoint
closures, define $D(P)$ to be the supremum of the distance from $1$ to
any $\Ksplit$-\qg with one endpoint in $\Hbar{1, x_1}$ and the other
endpoint in $\Hbar{1, x_2}$, where $\Ksplit$ is the constant from
Theorem \ref{theorem:uniform}.  Define $\U_r$ to be the collection of all
pairs of halfspaces $P$, such that the two halfspaces have disjoint
closures, and $D(P) \le r$, and let 
\[U_r = \bigcup_{P \in \U_r} \Hbar{1, x_1} \cross \Hbar{1, x_2}. \]
The probability that $w_n$ is \pA, with $(\l^+(w_n), \l^-(w_n)) \in
U_r$ is $\L_n( U_r )$.  Recall that by \pref{translation}, if
$\t_{w_n} \ge \Ktrans$ then $\t_{w_n} \ge \dhat{1, w_n} - 2 \dhat{1,
  \a_{w_n}} - \Ktrans$, where $\a_{w_n}$ is a $\Ksplit$-quasi-axis for
$w_n$, and $\Ktrans$ depends only on $\Ksplit$ and $\delta$.  The set
of elements of translation length at most $\Ktrans$ forms a set of
elements all of which are conjugate to elements of bounded relative
length, so by Theorem \ref{theorem:pa}, the probability that $\t_{w_n}
\ge \Ktrans$ tends to one as $n$ tends to infinity.  If $(\l^+(w_n),
\l^-(w_n)) \in U_r$, then $\dhat{1, \a_{w_n}} \le r$, by the
definition of $U_r$, so the probability that $(\l^+(w_n), \l^-(w_n))
\in U_r$ and 
\begin{equation} \label{eq:translation bound} 
\dhat{1, w_n} \ge \t_{w_n} \ge \dhat{1, w_n} - 2r - \Ktrans
\end{equation} 
tends to $(\nu \cross \rnu)(U_r)$ as $n$ tends to infinity, where the
left hand inequality follows from the fact that $\dhat{1, g}$ is an
upper bound for $\t_g$, for any element $g$.

The random walk makes linear progress in the relative space, Theorem
\ref{theorem:linear}, so there is a constant $\ell > 0$ such that
$\tfrac{1}{n}\nhat{w_n} \to \ell$ as $n$ tends to infinity almost
surely.  Therefore, using \eqref{eq:translation bound}, for any $\e >
0$, the limiting probability that $w_n$ has relative translation
length between $(\ell + \e)n$ and $(\ell - \e)n$ is at least $(\nu
\cross \rnu)(U_r)$, as $n$ tends to infinity, i.e.
\[ \lim_{n \to \infty} \P( \norm{\tfrac{1}{n}\t_{w_n} - \ell} \le
\e ) \ge (\nu \cross \rnu)(U_r), \]
for all $r$.

Therefore, in order to complete the proof of Lemma \ref{lemma:linear
  translation length}, it suffices to show that $\liminf U_r = \fmin
\cross \fmin \setminus \D$, where 
\[ \liminf U_r = \bigcup_s \bigcap_{r \ge s} U_r, \]
and $\D$ is the diagonal in $\fmin \cross \fmin$. This is because
$(\nu \cross \rnu)(\D) = 0$, by \pref{diagonal}, and $(\nu \cross
\rnu)(\liminf U_r) = 1$ implies that $(\nu \cross \rnu)(U_r)$ tends to
$1$ as $r$ tends to infinity.

We now complete the proof of Lemma \ref{lemma:linear translation
  length} by showing that $\liminf U_r = \fmin \cross \fmin \setminus
\D$.  Given distinct points $\l_1$ and $\l_2$ in $\fmin$, choose a
$\Ksplit$-\qg $\a$ with endpoints $\l_1$ and $\l_2$, and let $p$ be a
closest point on $\a$ to $1$. Recall that by \pref{disjoint}, there
are constants $A$ and $B$, which only depend on $\Ksplit$ and
$\delta$, such that for any points $x_1$ and $x_2$ on $\a$ with
$\dhat{p, x_i} \ge 2 \dhat{1, p} + A$, the halfspaces $\Hbar{1, x_1}$
and $\Hbar{1, x_2}$ are disjoint, and $\dhat{1, \b} \le \dhat{1, \a} +
B$ for any $\Ksplit$-\qg $\b$ with one endpoint in each of $\Hbar{1,
  x_1}$ and $\Hbar{1, x_2}$.  Therefore, $\Hbar{1, x_1} \cross
\Hbar{1, x_2} \subset U_r$, for all $r \ge \dhat{1, \a} + B$. As
$(\l_1, \l_2) \in \Hbar{1, x_1} \cross \Hbar{1, x_2}$, this implies
$(\l_1, \l_2) \in \liminf U_r$, for all $\l_1 \not = \l_2$. So
$\liminf U_r = \fmin \cross \fmin \setminus \D$, as required.
\end{proof}

We now show that the distance of a random Heegaard splitting grows
linearly. We will use the fact that the disc set $\ds$ is quasiconvex,
which was shown by Masur and Minsky \cite{mm3}.

\begin{theorem} \cite{mm3} \label{theorem:quasiconvex}
There is a constant $\Kqc$, which only depends on the genus of the
handlebody, such that the disc set of a handlebody is a
$\Kqc$-quasiconvex subset of the complex of curves.
\end{theorem}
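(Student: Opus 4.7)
The plan is to prove quasiconvexity of $\ds$ via \emph{disk surgery}, following Masur--Minsky. Given $\alpha, \beta \in \ds$ and any geodesic $v_0 = \alpha, v_1, \ldots, v_n = \beta$ in $\C(\S)$, the goal is to show each $v_i$ lies within a uniformly bounded distance of $\ds$.

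The central tool is the following surgery lemma: if $D$ is a compressing disk with $\partial D = \alpha_0 \in \ds$ and $v$ is an essential simple closed curve on $\S$ meeting $\alpha_0$ essentially, then cutting $D$ along an outermost subarc of $v \cap D$ produces a new compressing disk $D'$ whose boundary $\partial D'$ is disjoint from $\alpha_0$ (so $\dc{\partial D', \alpha_0} \le 1$) and satisfies $i(\partial D', v) < i(\alpha_0, v)$. Iterating at most $i(\alpha_0, v)$ times yields a finite sequence $\alpha_0, \alpha_1, \ldots, \alpha_N$ in $\ds$ with consecutive pairs disjoint and $\alpha_N$ disjoint from $v$. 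In particular, every vertex of $\C(\S)$ lies at distance at most $1$ from $\ds$.

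This pointwise statement is not yet quasiconvexity, because the surgery sequences at successive vertices of the geodesic could wander arbitrarily far from one another. To promote it, I would run surgery inductively along the geodesic $v_0, v_1, \ldots, v_n$, producing a sequence of disk-bounding curves $\gamma_i$ each disjoint from $v_i$, and show that the $\gamma_i$ themselves do not drift off the geodesic. The main obstacle is exactly this drift control.

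The drift is controlled using $\delta$-hyperbolicity of $\C(\S)$ together with the bounded geodesic image phenomenon for subsurface projections: intuitively, a surgery sequence which produced far-apart $\gamma_i$ and $\gamma_{i+1}$ would force large subsurface projections to appear along the geodesic $[v_i, v_{i+1}]$, and these projections can be controlled in terms of the intersection data appearing in the surgery. Concretely, one shows that the assignment sending a curve $v$ to the output of a canonical surgery process is coarsely Lipschitz into $\ds$ with uniformly bounded coarse fibers, and for such a map into a subset of a $\delta$-hyperbolic space, quasiconvexity of the image is a standard consequence. The constant $\Kqc$ then depends only on $\delta$ and on the combinatorial complexity of the handlebody, hence only on the genus, as required.
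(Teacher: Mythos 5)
There is a genuine gap, and it sits in your central surgery lemma. If $D$ is a compressing disk for the handlebody and $v$ is merely an essential simple closed curve on $\S = \partial H$, then $v \cap D = v \cap \partial D$ is a finite set of points on $\partial D$, not a collection of arcs in $D$: there is no ``outermost subarc of $v \cap D$'' to cut along. The outermost-arc surgery you invoke needs a second \emph{disk} $E$ with $\partial E = v$, so that $D \cap E$ is a union of arcs and circles in $D$; in that setting it proves that $\ds$ is connected (a path of disk boundaries with consecutive curves disjoint), not that arbitrary vertices of $\C(\S)$ are close to $\ds$. Your intermediate conclusion that every vertex lies within distance $1$ of $\ds$ would make $\ds$ coarsely dense in $\C(\S)$, which is false: it would force every Heegaard splitting of that genus to have distance at most $2$, contradicting Hempel's high-distance splittings and Theorem \ref{theorem:main} of this paper, and it would force the limit set of $\ds$ to be all of $\fmin$, whereas Section \ref{section:disc set} shows that limit set has harmonic measure zero. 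So the lemma on which the whole sketch rests cannot be correct for general $v$.

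For the record, the paper does not prove this statement; it cites Masur and Minsky \cite{mm3}, whose argument is different from your drift-control step. They run the surgery procedure guided by a nested train track splitting sequence interpolating between the two disk boundaries $\alpha$ and $\beta$: the successive surgered disk boundaries are shown to be carried by the train tracks of the sequence, hence lie in a uniformly bounded neighbourhood of the associated vertex-cycle path, which is an unparametrized quasigeodesic and therefore fellow-travels any geodesic $[\alpha,\beta]$ (compare Theorem \ref{theorem:uniform}); the constant then depends only on the surface. Your final reduction (a coarsely Lipschitz retraction onto a subset of a $\delta$-hyperbolic space has quasiconvex image) is a legitimate standard fact, but the retraction you propose is built from a surgery step that does not exist; to repair the argument you must replace the curve $v$ by an actual second disk or by a train track as the carrier of the surgery data.
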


We shall define the disc set in the relative space to be the image of
the disc set in the complex of curves under the quasi-isometry from
the complex of curves to the relative space. Then the disc set in the
relative space is also quasiconvex, with quasiconvexity constant at
most $Q \Kqc$, where $Q$ is the quasi-isometry constant between the
relative space and the complex of curves. We will abuse notation by
also referring to the disc set in the relative space as $\ds$.

We will also need to know that the harmonic measure of the disc set is
zero, Theorem \ref{theorem:disc set}, so we will need to assume that
the support of the probability distribution $\mu$ generates a
semi-group containing a complete subgroup of the mapping class group.
We now show that for any set $X$ which is quasiconvex and whose limit
set has harmonic measure zero, the distance between $X$ and $w_nX$
grows linearly in $n$. This implies Theorem \ref{theorem:main}, where
the constants $\ell_1$ and $\ell_2$ may be chosen to be $\ell/Q$ and
$\ell Q$.

\begin{theorem} \label{theorem:last}
Let $G$ be the mapping class group of a closed orientable surface.
Consider a random walk generated by a finitely supported probability
distribution $\mu$ on $G$, and whose support generates a semi-group
containing a complete subgroup of the mapping class group.  Let $X$ be
a quasiconvex subset of the relative space, whose limit set has
measure zero with respect to both harmonic measure and reflected
harmonic measure. Then there is a constant $\ell > 0$ such that
\[ \P( \norm{\tfrac{1}{n}\dhat{X, w_n X} - \ell} \le \e ) \to 1 \text{ as }
n \to \infty, \] 
for all $\e > 0$, where $\dhat{X, w_n X}$ is the minimum distance between
$X$ and $w_n X$.
\end{theorem}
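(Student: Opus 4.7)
The plan is to show that for group elements $g = w_n$ whose axis endpoints $\l^{\pm}(g)$ land in a ``generic'' region of $\fmin \cross \fmin$, the distance $\dhat{X, gX}$ is approximately equal to the translation length $\t_g$, and then to invoke Lemma \ref{lemma:linear translation length} to conclude linear growth at the same rate $\ell$.

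First I would use the hypothesis that $\L(X) \cap \fmin$ has measure zero for both $\nu$ and $\rnu$ to choose, for each $\eta > 0$, disjoint compact sets $A, B \subset \fmin$, both disjoint from $\L(X) \cap \fmin$, with $\nu(A), \rnu(B) \ge 1 - \eta$. By the lemma following Theorem \ref{theorem:independence}, the pair distribution $\L_n$ converges to $\nu \cross \rnu$, so the probability that $w_n$ is \pA with $(\l^+(w_n), \l^-(w_n)) \in A \cross B$ is at least $1 - 3\eta$ for all sufficiently large $n$. On this event, combined with the event $\t_{w_n}/n \in [\ell - \e, \ell + \e]$ from Lemma \ref{lemma:linear translation length}, we shall estimate $\dhat{X, w_n X}$.

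The core geometric step is a uniform bounded projection statement: there is a constant $D = D(A, B, X)$ such that for every \pA element $g$ with $(\l^+(g), \l^-(g)) \in A \cross B$ and every $\Ksplit$-quasi-axis $\a$ of $g$, the nearest point projection of $X$ onto $\a$ has diameter at most $D$. This is the standard consequence, in $\delta$-hyperbolic spaces, of the fact that two quasiconvex subsets with disjoint closed limit sets have uniformly bounded mutual nearest point projections, applied to $X$ (which is $Q\Kqc$-quasiconvex by Theorem \ref{theorem:quasiconvex}) and $\a$ (which is $K_2$-quasiconvex by \pref{qg}). Given the bound, the projections of $X$ and $gX = g\cdot X$ onto $\a$ differ by the action of $g$, which translates $\a$ by approximately $\t_g$; since each projection has diameter at most $D$, they are separated on $\a$ by at least $\t_g - 2D$. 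Because nearest point projection is coarsely distance-decreasing (\pref{double}), this gives $\dhat{X, gX} \ge \t_g - 2D - 24\delta$. The matching upper bound comes from fixing any vertex $p \in X$ and observing $\dhat{X, w_n X} \le \dhat{p, w_n p} \le \nhat{w_n} + 2\dhat{1, p}$, which is asymptotic to $\ell n$ by Theorem \ref{theorem:linear}. Together with Lemma \ref{lemma:linear translation length}, these bounds pinch $\tfrac{1}{n}\dhat{X, w_n X} \to \ell$ in probability.

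The main obstacle is establishing the uniform projection bound $D$ independently of $n$: the projection diameter of one quasiconvex set onto another depends a priori on the specific pair, but as $n$ varies the quasi-axis $\a_{w_n}$ ranges over an unbounded family. Uniformity is extracted from the fact that $A, B$ and $\L(X) \cap \fmin$ are pairwise disjoint closed subsets of $\fmin$, so that the Gromov product of any point of $\L(X)$ with any point of $A \cup B$ is bounded above by a constant depending only on the three sets. Standard $\delta$-hyperbolic arguments then translate this separation into a uniform projection bound depending only on the quasiconvexity constants of $X$ and of $K_1$-quasigeodesics, the constant $\delta$, and that universal Gromov-product bound.
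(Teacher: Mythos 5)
Your overall strategy --- reduce to a bounded-projection estimate for the disc set onto the quasi-axis, and then pinch $\dhat{X,w_nX}$ between $\t_{w_n}-O(1)$ and $\nhat{w_n}+O(1)$ --- is the right one and matches the paper's. But the very first step has a genuine gap: you ask for \emph{disjoint} compact sets $A,B\subset\fmin$, both disjoint from the limit set of $X$, with $\nu(A)\ge 1-\eta$ \emph{and} $\rnu(B)\ge 1-\eta$. If $\mu$ is symmetric then $\rnu=\nu$, and disjointness forces $\nu(A)+\nu(B)\le 1$, so both measures cannot exceed $1-\eta$ once $\eta<\tfrac12$. The theorem certainly must cover symmetric $\mu$ (e.g.\ the nearest-neighbour walk on the Torelli group mentioned in the introduction), so no single pair $(A,B)$ can carry most of the product measure. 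The disjointness of $A$ and $B$ is not optional in your argument --- it is exactly what makes the quasi-axis pass uniformly close to the basepoint and gives the uniform projection constant $D(A,B,X)$ --- so you cannot simply drop it.

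The repair is to replace the single product $A\cross B$ by a union of many such products. This is what the paper does: it considers the collection $\U_r$ of all pairs of nested, $X$-disjoint halfspaces whose associated quasigeodesics stay within distance $r$ of the origin and receive a projection of the complement of diameter at most $r$, sets $U_r=\bigcup_{P\in\U_r}\Hbar{1,x_1}\cross\Hbar{1,x_2}$, and shows via Proposition \ref{prop:disjoint} that $\liminf U_r=(\fmin\setminus X)\cross(\fmin\setminus X)\setminus\D$, which has full $\nu\cross\rnu$-measure by Proposition \ref{prop:diagonal} and the measure-zero hypothesis on $X$. For each fixed $r$ the geometric estimate $\dhat{X,gX}\ge\t_g-r-L$ holds uniformly over pairs in $\U_r$, and one lets $r\to\infty$ only after taking $n\to\infty$. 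Two smaller points you would also need to address in a complete write-up: $\fmin$ is not compact and inner regularity of $\nu$ by compact sets is not free (the paper's halfspace neighbourhoods $N_r(T)$ sidestep this), and weak-$*$ convergence of $\L_n$ gives the lower bound $\liminf\L_n(U)\ge(\nu\cross\rnu)(U)$ only for open $U$, so the approximating sets must be chosen with this in mind. The remainder of your argument --- the uniform projection bound for a fixed separated pair, the inequality $\dhat{X,gX}\ge\t_g-2D-24\delta$, and the matching upper bound via $\dhat{p,w_np}$ --- is sound.
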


\begin{proof}
Let $H(1, x_1)$ be a halfspace which is $K$-nested inside $H(1, y_1)$,
and let $H(1, x_2)$ be a halfspace which is $K$-nested inside $H(1,
y_2)$.  We will call a pair of such nested halfspaces a
\emph{$K$-nested pair}. We will refer to $H(1, x_1)$ and $H(1, x_2)$
as the \emph{inner pair} of halfspaces, and $H(1, y_1)$ and $H(1,
y_2)$ as the \emph{outer pair}.  We say a $K$-nested pair is
\emph{$X$-disjoint}, if the closures of the outer pair, $\Hbar{1,
  y_1}$ and $\Hbar{1, y_2}$ are disjoint, and are also disjoint from
$X$.  We shall choose the constant $K$ to be the constant $B$ from
\pref{disjoint}, which only depends on $\delta$, and a choice of \qg
constant, which we will choose to be the constant $\Ksplit$ from
Theorem \ref{theorem:uniform}.

Given a $K$-nested pair $P = \{ H(1, x_1) \subset H(1, y_1), H(1, x_2)
\subset H(1, y_2) \}$, define $D(P)$, to be the maximum distance from
$1$ to any $\Ksplit$-\qg with one endpoint in each of the closures of
the inner halfspaces, $\overline{H(1,x_1)}$ and $\overline{H(1,x_2)}$.
Let $R(P)$ be the diameter of the \npp of the closure of the
complement of $H(1, y_1) \cup H(1, y_2)$ to any $\Ksplit$-\qg with one
endpoint in $\overline{H(1,x_1)}$ and the other in $\Hbar{1,x_2}$.
Given a number $r$, define $\U_r$ to be the collection of all pairs of
$K$-nested, $X$-disjoint halfspaces $P$ such that $D(P) \le r$ and
$R(P) \le r$, and let 
\[ U_r = \bigcup_{P \in \U_r} \Hbar{1, x_1} \cross \Hbar{1, x_2}, \]
where $H(1, x_1)$ and $H(1, x_2)$ are the inner pair of $P$.

Consider a \pA element $g$, with $\Ksplit$-quasi-axis $\a$, with
endpoints $(\l^+(g),\l^-(g)) \in \Hbar{1, x_1} \cross \Hbar{1, x_2}$,
where the two halfspaces are the inner pair of some pair $P \in \U_r$.
As $X$ is contained in the complement of the outer pair of halfspaces
of $P$, the nearest point projection of $X$ to $\a$ has diameter at
most $r$, by the definition of $U_r$. As $g$ moves every point
distance at least $\t_g$, the distance between the \npps of $X$ and
$gX$ to $\a$ is at least $\t_g - r$.  Segments of $\a$ lie in
$\Kn$-neighbourhoods of geodesics $[\a_{-n}, \a_{n}]$ between their
endpoints, where $\Kn$ is a geodesic neighbourhood constant for $\a$,
which only depends on $\Ksplit$.  By \pref{close}, the closest point
projection to a segment of $\a$ is distance at most $3\Kn + 6 \delta$
from its \npp to the geodesic $[\a_{-n}, \a_n]$.  We may estimate the
distance between $X$ and $gX$ in terms of the distance between their
\npps to $[\a_n, \a_{-n}]$, as by \pref{double}, $\dhat{X, gX} \ge
\dhat{\pi(X), \pi(gX)} - 24 \delta$, where $\pi$ is the \npp map onto
$[\a_n, \a_{-n}]$. This implies that there is a constant $L = 6 \Kn +
36 \delta$, which only depends on $\Ksplit$ and $\delta$, such that
$\dhat{X, gX} \ge \t_g - r - L$.

The probability that $w_n$ is \pA, with stable and unstable foliations
$(\l^+(w_n), \l^-(w_n)) \in U_r$, is $\L_n(U_r)$. Furthermore the
translation length $\t_{w_n}$ grows linearly, Theorem
\ref{theorem:linear}, and the measures $\L_n$ converge to $\nu \cross
\rnu$, by Theorem \ref{theorem:independence}. Therefore for any $r$,
and any $\e> 0$,
\[ \lim_{n \to \infty} \P ( \norm{ \tfrac{1}{n} \dhat{X, w_n X} -
  \ell } \le \e ) \ge (\nu \cross \rnu )(U_r),  \]
where $\ell$ is the constant from Lemma \ref{lemma:linear translation
  length}.  Therefore, in order to complete the proof of Theorem
\ref{theorem:last}, it suffices to show that 
\begin{equation} \label{eq:liminf} 
\liminf U_r = (\fmin \setminus X) \cross (\fmin \setminus X)
\setminus \D. 
\end{equation}
This is because the diagonal has $(\nu \cross \rnu)$-measure zero, by
\pref{diagonal}, and we have assumed that $X$ has measure zero with
respect to both $\nu$ and $\rnu$. Therefore, $(\nu \cross
\rnu)(\liminf U_r) = 1$, which implies that $(\nu \cross \rnu)(U_r)$
tends to one as $n$ tends to infinity.

We now complete the proof of Theorem \ref{theorem:last} by verifying
line \eqref{eq:liminf}.  Given any pair of foliations $(\l_1, \l_2)
\in (\fmin \setminus X) \cross (\fmin \setminus X)$, with $\l_1 \not =
\l_2$, we may choose a $\Ksplit$-\qg $\a$ with $\l_1$ as the positive
endpoint and with $\l_2$ as the negative endpoint, i.e. $\l_1 =
\lim_{n \to \infty} \a_n$, and $\l_2 = \lim_{n \to \infty} \a_{-n}$.
Let $p$ be a closest point on $\a$ to $1$.  Recall that by
\pref{disjoint}, there are constants $A, B$ and $C$, which only depend
on $\Ksplit$ and $\delta$, such that for any points $x_1$ and $x_2$ on
$\a$, separated by $p$, with $\dhat{p, x_i} \ge 2 \dhat{1, p} + A$,
there is a pair $P$ of $B$-nested halfspaces $H(1, x_1) \subset H(1,
y_1)$ and $H(1, x_2) \subset H(1, y_2)$, such that $\Hbar{1, y_1}$ and
$\Hbar{1, y_2}$ are disjoint, and any $\Ksplit$-\qg $\b$ with one
endpoint in each of $\Hbar{1, x_1}$ and $\Hbar{1, x_2}$ is contained
in the union of the outer halfspaces together with an
$A$-neighbourhood of $\a$. In particular this implies that $\dhat{1,
  \b} \le \dhat{1, \a} + A$, and so $D(P) \le \dhat{1, \a} + A$.
Furthermore, the nearest point projection of the complement of $H(1,
y_1) \cup H(1, y_2)$ to $\b$ has diameter at most $\half \dhat{x_1,
  x_2} + C$, which implies that $R(P) \le \half \dhat{x_1, x_2} + C$.
As $\l_1$ and $\l_2$ are disjoint from $X$, which is closed, we may
find a pair $P$ of $B$-nested halfspaces, $H(1, x_1) \subset H(1,
y_1)$ and $H(1, x_1) \subset H(1, y_2)$, with the outer halfspaces
$\Hbar{1, y_1}$ and $\Hbar{1, y_2}$ disjoint from $X$, and with $D(P)
\le \dhat{1, \a} + A$, and $R(P) \le \half \dhat{x_1, x_2} +C$.
Therefore, $(\l_1, \l_2) \in U_r$, for all $r \ge \dhat{1, \a} + A +
\half \dhat{x_1, x_2} + C$, and so $\liminf U_r = (\fmin \setminus X)
\cross (\fmin \setminus X) \setminus \D$.
\end{proof}

Finally, we remark that this also gives the result stated in the
introduction regarding random Heegaard homology spheres obtained by a
nearest neighbour random walk on the Torelli group. In fact, the union
of the disc set $\ds$ with any fixed translate $g\ds$ is quasiconvex,
though possibly with a different quasiconvexity constant, and also has
limit set having harmonic measure zero. This implies that the distance
between $g\ds$ and $w_n\ds$ grows linearly in $n$, so by choosing a
random walk supported on the Torelli group, the homology of $M(w_n g)$
is the same as that of $M(g)$, so we can create random Heegaard
splittings which are homology spheres, or which have the homology of
any fixed $3$-manifold.


\begin{bibdiv}
\begin{biblist}
\bibselect{volume}
\end{biblist}
\end{bibdiv}


\end{document}